\newtheorem{theorem}{\bf Theorem}[section]
\newtheorem{corollary}[theorem]{\bf Corollary}
\newtheorem{proposition}[theorem]{\bf Proposition} 
\newtheorem{notation}[theorem]{\sc Notation} 
\newtheorem{definition}[theorem]{\bf Definition} 
\newtheorem{lemma}[theorem]{\bf Lemma} 
\newtheorem{example}[theorem]{\it Example}  
\newtheorem{examples}[theorem]{\it Examples}  
\newtheorem{algorithm}[theorem]{\bf Algorithm} 
\newtheorem{conjecture}[theorem]{\bf  Conjecture}
\newtheorem{remark}[theorem]{\it Remark}
\newtheorem{remarks}[theorem]{\it  Remarks}
\newenvironment{proof}{{\noindent\it Proof.\ }}{$\bullet$\par\vspace{4mm}} 
\def \bt{ \begin{theorem} }
\def \et{ \end  {theorem} }
\def \bl{ \begin{lemma} }
\def \el{ \end  {lemma} }
\def \bp{ \begin{proposition} }
\def \ep{ \end  {proposition} }
\def \bn{ \begin{notation} }
\def \en{ \end  {notation} }
\def \bq{ \begin {question} }
\def \eq{ \end {question} }
\def \bc{ \begin{corollary} }
\def \ec{ \end  {corollary} }
\def \bcj{ \begin{conjecture} }
\def \ecj{ \end  {conjecture} }
\def \bd{ \begin{definition} }
\def \ed{ \end  {definition} }
\def \bdp{ \begin{definitionprop} }
\def \edp{ \end  {definitionprop} }
\def \bdt{ \begin{definitiontheorem} }
\def \edt{ \end  {definitiontheorem} }
\def \bpr{ \begin  {proof} }
\def \epr{ \end  {proof} }
\def \ba{ \begin{algorithm} }
\def \ea{ \end{algorithm} }
\def \be{ \begin{example} }
\def \eex{ \end{example} }
\def \bes{ \begin{examples} }
\def \eexs{ \end{examples} }
\def \br{ \begin{remark} }
\def \er{ \end{remark} }
\def \brs{ \begin{remarks} }
\def \ers{ \end{remarks} }
\def \bpb{ \begin{problem} }
\def \epb{ \end{problem} }
\newcommand{\lcm} {\mathrm{lcm}}
\newcommand{\G}{\Gamma}   
\newcommand{\BFn}{\hbox{\bf n} }   
\newcommand{\BFi}{\hbox{\bf i} } 
\newcommand{\sBFi}{\hbox{\bf \scriptsize i}} 
\newcommand{\BFZ}{\hbox{\bf 0} }  
\newcommand{\BFone}{\hbox{\bf 1}}  
\newcommand{\sBFone}{\hbox{\bf \scriptsize 1}}   
\newcommand{\BFX}{\hbox {\bf\it X} }  
\begin{document}
\begin{verbatim}
In "J.Symbolic Computation (1995), Vol. 20, 71-92."

\end{verbatim}
\begin{center}
ON $n$--DIMENSIONAL SEQUENCES. I 
\footnote{Research supported by Science and
Engineering Research Council Grant GR/H15141.

Current Address: School of Mathematics and Physics, University of Queensland, Brisbane, Queensland 4072, Australia, Email: ghn@maths.uq.edu.au, ORCID 0000-0003--518-5833.)}
\end{center}
\begin{center}
GRAHAM NORTON

Centre for Communications Research, University of Bristol, England.

Dedicated to Professor Peter J. Hilton on his retirement.
\end{center}
\begin{abstract} 
Let $R$ be a commutative ring and let $n \geq 1.$
We study $\Gamma(s)$, the generating function and Ann$(s)$, the ideal of
characteristic polynomials of $s$, an $n$--dimensional sequence over $R$.\\

We express $f(X_1,\ldots,X_n) \cdot \G(s)(X_1^{-1},\ldots ,X_n^{-1})$
as a partitioned sum. That is, we give (i) a $2^n$--fold ``border'' 
partition (ii) an explicit expression for the product 
as a $2^n$--fold sum; the support of each summand is contained in 
precisely one member of the partition. A key summand is $\beta_0(f,s)$,
the ``border polynomial'' of $f$ and $s$, which is divisible by 
$X_1\cdots X_n$.\\

We say that $s$ is {\em eventually rectilinear} if the elimination ideals
Ann$(s)\cap R[X_i]$ contain an $f_i(X_i)$ for $1 \leq i \leq n$. 
In this case, we show that $\mbox{Ann}(s)$ is the ideal quotient 
$(\sum_{i=1}^n(f_i)\ :\ \beta_0(f,s)/(X_1\cdots X_n)).$\\ 

When $R$ and $R[[X_1,X_2, \ldots ,X_n]]$ are factorial domains
(e.g. $R$ a principal ideal domain or ${\Bbb F}[X_1,\ldots,X_n]$), 
we compute {\em the monic generator} $\gamma _i$ of $\mbox{Ann}(s) 
\cap R[X_i]$ from known $f_i \in \mbox{Ann}(s) \cap R[X_i]$ or 
from a finite number of $1$--dimensional 
linear recurring sequences over $R$. Over a field ${\Bbb F}$ this gives an 
$O(\prod_{i=1}^n \delta \gamma _i^3)$ algorithm to compute an 
${\Bbb F}$--basis for $\mbox{Ann}(s)$.\\ 
\end{abstract}

{\bf \small AMS subject classifications (1991)}. 
{\small Primary: 13P10, 68Q40; secondary: 94B15, 94B35.}

\section{Introduction}
Linear recurring sequences (lrs) have a long and useful history; 
see for example 
Cerlienco {\em et al.} (1987), Zierler (1959) and the works cited there. 
Sequences over the integers and over finite fields are applied {\em 
inter alia} in the Analysis of Algorithms (Greene \& Knuth (1982)), 
Telecommunications (McEliece (1987)), Coding Theory 
(Peterson \& Weldon (1972), Fitzpatrick \& Norton (1991)) 
and Cryptography (Rueppel (1986)). 

Applications of more general sequences have also appeared in the recent
literature: sequences over ${\Bbb Z}/4{\Bbb Z}$ (Bozta\c{s}, 
Hammons and Kumar (1992)), over the Gaussian integers (Fan \& Darnell 
(1994))
and $2$--dimensional lrs over a field (i.e. lrs indexed by 
${\Bbb N}^2$); see for example Homer \& Goldmann 
(1985) and the works cited there, Lin \& Liu (1988), Prabhu \& Bose (1982) 
and Sakata (1978, 1981, 1990), Fitzpatrick \& Norton (1990), 
Chabanne \& Norton (1994)), Norton (1995b).

We consider $n$--dimensional ($n$--D) 
sequences over an arbitrary commutative ring $R$, where $n \geq 1$.
In Section 2 we set up our basic framework and develop some
preliminary properties of (i) the ideal Ann$(s)$ of characteristic 
polynomials and (ii) $\Gamma(s)$, the generating function of an $n$--D
sequence $s$. Section 3 begins with a certain ``border'' partition and studies
the resulting decomposition of the product 
$f(X_1,\ldots,X_n)\cdot\Gamma(s)(X_1^{-1},\ldots,X_n^{-1})$.
Finally, some properties of $\Gamma(s)$ and Ann$(s)$ for an ``eventually 
rectilinear (EVR)'' sequence are given in Section 4.

In more detail, we study sequences over $R$ indexed {\em negatively} i.e. 
by $-{\Bbb N}^n$. 
The traditional approach to studying $\Gamma(s)$ and Ann$(s)$ is 
to index sequences by ${\Bbb N}^n$ and to  
regard $\Gamma (s)$ as an element of the power series ring
$R[[X_1,\ldots,X_n]]$; in this way, 
$R[X_1,\ldots,X_n]$ acts by {\em left}--shifting. 
This approach complicates
the theory and proofs unnecessarily however.
For example, it forces use of the 
reciprocal $f^\ast$ of a polynomial $f$ (for $f^\ast\Gamma(s)$ to be
in the power series ring), resulting in  
a characteristic polynomial $f$ being characterized in terms of 
$\Gamma (s)$, its degree and $f^\ast$. Compare also Corollaries 2.9,
2.12 and 3.9, 3.10 below.

Instead, we let $f\circ s$ denote $f \in R[X_1,\ldots,X_n]$ acting on $s$ 
by {\em right--shifting} (defined in Section 2) and let 
$\mbox{Ann}(s)$ denote the annihilator ideal of $s$ with respect to 
right--shifting. We use the ring of Laurent series 
$R((X_1^{-1},\ldots,X_n^{-1}))$ --- rather than power series ---
as the ambient ring. This is a much more natural approach: the ring of 
Laurent series contains $R[X_1,\ldots,X_n]$ 
{\em as standard $R[X_1,\ldots,X_n]$--submodule}, as well as 
the product $f\cdot\Gamma(s)$. 

Section 2 defines a partial order on ${\Bbb Z}^n$ and generalizes 
standard interval notation to ${\Bbb Z}^n$. We give the basic definitions, 
various examples of $n$--D lrs, define EVR sequences and give some
preliminary results relating Ann$(s)$ and $\Gamma(s).$ For example, we show
that $\Gamma(f\circ s)$ is a summand of $f\cdot\Gamma(s).$

In Section 3, we define the border partition and the ``border polynomial''
$\beta_0(f,s)$ of $f$ and $s$, a key summand of $f\cdot\Gamma(s)$, which
is divisible by $X_1 \cdots X_n$.
The remaining $2^n-2$ summands are ``border Laurent series''. 
We also show how these border summands can be
rewritten using a generalized Newton divided difference operator.
Section 3 concludes with our decomposition formula for $f\cdot \Gamma(s).$

Section 4 concentrates on EVR sequences over commutative rings, 
characterizing their generating functions and exhibiting Ann$(s)$ as an ideal 
quotient. This requires looking at the elimination ideals 
$\mbox{Ann}(s) \cap R[X_i], \ (1 \leq i \leq n)$
via some ancillary results on 1--D sequences and 1--D ``associated 
sequences''.

We call $R$ {\em potential} if for all $n \geq 1, R[[X_1,X_2,\ldots ,X_n]]$ is 
a factorial --- unique factorisation --- domain. In this case, we show that 
the above elimination ideals have a unique {\em monic generator}, and give
two ways of finding them: Theorem~\ref{Annsi} and Corollary~\ref{Annlcm}. 
We also generalize a result of Cerlienco \& Piras (1991) to sequences 
over potential domains (Theorem~\ref{cofinite}).

Finally, we show that if $s$ is EVR with $f_i \in \mbox{Ann}(s)\cap R[X_i]$,
then Ann$(s)$ is the ideal quotient 
$(\sum_{i=1}^n(f_i)\ :\ \beta_0(f,s)/(X_1 \cdots X_n)).$
Algorithm~\ref{Anness} below shows how to compute 
the corresponding basis for $\mbox{Ann}(s)$.
There are known methods (Buchberger (1985) and Cox {\em et al.} (1991)) 
for finding a 
basis of an ideal quotient. The former yields a linear system of size 
$\prod_{i=1}^n \delta \gamma _i$ and the latter yields a groebner basis for 
$\mbox{Ann}(s)$ (with respect to the lexicographic term order). 

Our original goals were two--fold: to try and simplify the Commutative Algebra
used in some early papers on the structure of 2--D cyclic codes (Imai \&
Arakaki (1974), Ikai, Kosako \& Kojima (1975, 1976), Imai (1977), 
Sakata (1978, 1981)) and secondly, to better understand Sakata (1988). It 
seemed that this would be helped by undertaking a fundamental study of $n$--D
sequences {\em per se}. 
(Even though our principal application is the case $n=2$, we 
found that formulating and proving our results {\em for all} $n \geq 1$ 
improved and simplified the theory.)
For some preliminary applications of this work to $n$--D 
cyclic codes and their
duals, see Norton (1995b). An application to simplified 
encoding of $n$--D cyclic codes was presented in Norton (1995c). 

We would suggest that the use of right--shifting and 
$R((X_1^{-1},\ldots,X_n^{-1}))$ sharpens and simplifies the theory of 
both finite and linear recurring sequences: (i) Theorem~\ref{main} 
generalizes the treatment of rectilinear sequences over a field 
studied in Fitzpatrick \& Norton (1990) to EVR sequences over an arbitrary
commutative ring and does
not require the ``Reduction Lemma'' of Fitzpatrick \& Norton (1990)
(ii) a {\em division--free} analogue of the Berlekamp--Massey algorithm 
in $R((X^{-1}))$, which computes a ``minimal realization'' $(f,\beta_0(f,s))$
of a finite sequence $s$ over a domain $R$ appears in Norton (1995a), 
together with some new applications of minimal realization
(iii) a theory of division--free minimal realizations of a finite $n$--D 
sequence over a domain will appear in Norton (1995e); 
{\em cf.} Sakata (1988, 1990). 

The results of this paper have been applied in Chabanne \& Norton (1994). In 
fact, the $n$--D key equation ({\em loc. cit.}, Theorem 2.4) is a special case 
of Corollary~\ref{Annsigen} over a finite field: $f_i$ is the error--locator 
$X_i$--polynomial and $\beta _0(\prod_{i=1}^n f_i ,s)/(X_1\cdots X_n)$ is 
the error--evaluator polynomial. Indeed, the minimal realization algorithm
of Norton (1995a) applied to the 1--D version of our
key equation gives a simpler way of
decoding a $t$--error correcting Reed--Solomon code in at most
$t(10t+1)$ multiplications, without using Forney's procedure to compute
the error magnitudes for example, Norton (1995d). 
It seems likely that our work will also apply to decoding geometric Goppa 
codes. 

Finally, our approach was partly suggested by the formulation for 
$1$--D sequences given in Ferrand (1988), which uses $R((X))$ as the ambient 
ring. We note that ${\Bbb F}((X^{-1}))$, where ${\Bbb F}$ denotes a field, 
was used in Welch \& Scholtz 
(1979) in the context of decoding BCH codes, and in Niedereiter (1988).

\subsection{Notation}  
In general, we use lower case Roman letters for elements, Greek letters for
functions and short names for sets. ${\Bbb N} = \{0,1,2,\ldots\}$ and 
${\Bbb Z} = \{0, \pm 1, \pm 2, \ldots \}$.

\begin{tabbing}
{\bf Notation} \= \hspace{1cm} \= {\bf Meaning}\\\\

$\BFn$ \> \> $\{1,2,\ldots ,n \}$.\\

$\pi _i$ \> \> Projection of ${\Bbb Z}^n$ onto the $i^{th}$ component, 
$i \in \BFn$.\\
 
$\BFX^a$ \> \> Monomial $X_1^{a_1}X_2^{a_2}\ldots X_n^{a_n}$, where 
$a_i \in {\Bbb Z}, i \in \BFn$.\\
 
$\widehat{\bf X}_i$ \> \> Variables $X_1, X_2, \ldots ,X_n$, excluding $X_i , 
i \in \BFn$.\\
 
$P_n$        \> \> Polynomials over $R$ in $X_1,\ldots,X_n$\\

$L_n$        \> \>  Laurent series over $R$ in $X_1^{-1},\ldots,X_n^{-1}.$\\

$S_n$        \> \> Power series over $R$ in $X_1^{-1},\ldots,X_n^{-1}.$\\

$\delta _ig$ \> \> Degree of $g \in  L_n$ as Laurent series 
in $X_i^{-1}, i \in \BFn$.\\
 
$\delta g$ \> \> Degree of $g \in L_n$ i.e. an $n$--vector.\\
 
 
$S^n(R)^-$ \> \> $n$--dimensional sequences over $R$, indexed by $-{\Bbb 
N}^n$.\\
 
$f\circ s$ \> \> Polynomial $f$ acting on sequence $s$ by shifting.\\
 
$\beta _0(f,s)$ \> \> Border polynomial of $f$ and $s$.\\
 
$\mbox{Ann}(s)$ \> \> Characteristic ideal of $s$.\\
 
$\gamma (s)$ \> \> Primitive generator of $\mbox{Ann}(s)$, $s \in S^1(R)^-, R$ 
factorial.\\
 
$s^{(i,\ast)}$ \> \> $s$ regarded as an element of 
$S^1(R[[\widehat{\bf X}_i^{-\hbox{\bf \scriptsize 1}}]])^-$.\\
 
$\Gamma (s)$ \> \> Generating function of $s$, as element of $S_n.$\\
\end{tabbing}
 
We equate sums and products over the empty set to $0$ and $1$ respectively.

\section{Preliminaries}
\subsection{ ${\Bbb Z}^n$ }
Throughout the paper, $n \geq 1$ and $\BFn = \{1,2,\ldots ,n \}$. 
Addition and negation in ${\Bbb Z}^n$ are componentwise; thus we can write 
$-{\Bbb N}^n = (-{\Bbb N})^n.$ We let $\BFZ, \BFone$ be the 
points in ${\Bbb Z}^n$ with all components $0, 1$ respectively. 

For $i \in \BFn,\ \pi _i : {\Bbb Z}^n \rightarrow {\Bbb Z}$ 
is the projection onto the $i^{th}$ factor; for $a \in {\Bbb Z}^n$,  
we also write $a_i$ for $\pi _ia$.

${\Bbb Z}^n$ is {\em partially ordered} by the relation $\leq$ 
on each component: $a \leq b \mbox{ iff } a_i \leq b_i$ {\em 
for all} $i \in \BFn$; $a \geq b$ is synonymous with $b \leq a$. 
We write $a \not \geq b$ to mean that $a \geq b$ is false, that is, 
{\em for some} $i \in \BFn$, $a_i < b_i$. 

We generalize the usual interval notation in ${\Bbb Z}$ to describe 
certain subsets of ${\Bbb Z}^n$:\\ 
for $a,b \in {\Bbb Z}^n,$ 
$$(-\infty, a] = \{c \in {\Bbb Z}^n \ : \ c \leq a\} 
\mbox{ and } [a,\infty) = \{c \in {\Bbb Z}^n  \ : \ a \leq c\}$$
$$[a,b] = \{ c \in {\Bbb Z}^n\ :\ a \leq c \leq b \} = \prod_{i=1}^n [a_i,b_i]$$
$$(a,b] = \{ c \in {\Bbb Z}^n\ :\ a \not \geq c \leq b\}
= (-\infty,b]\setminus (-\infty,a]$$
$$[a,b) = \{ c \in {\Bbb Z}^n\ :\ a \leq c \not \geq b\}
= [a,\infty) \setminus [b,\infty).$$

\subsection{Polynomials and Laurent series}
$R$ is a commutative ring with $1\neq 0$ and $P_n$ 
denotes the ring of $R$--polynomials in $X_1,\ldots,X_n$; 
$L_n$ denotes the ring of
Laurent series in $X_1^{-1},\ldots,X_n^{-1}$, which contains $P_n$.
Indeed, $L_n$ is a $P_n$--module which has $P_n$ as 
standard $P_n$--submodule, where the 
action of $X_i$ is {\em a shift to the right.} 
$S_n \subseteq L_n$ denotes the ring of $R$--power series in 
$X_1^{-1},\ldots,X_n^{-1}.$

For $a \in {\Bbb Z}^n$, we abbreviate $X_1^{a_1} X_2^{a_2} \ldots 
X_n^{a_n}$ to $\BFX^a$. For $i \in \BFn,\ \widehat{\bf X}_i$ denotes the
variables $X_1,\ldots,X_n$, excluding $X_i$.

For $G \in L_n\setminus\{0 \}, \ G_a$ denotes a coefficient of $G$, 
and $\mbox{Supp}(G) = \{ a \in {\Bbb Z}^n : G_a \neq 0 \}$ 
is called the {\em support} of $G; \ \mbox{Supp}(0) = \emptyset$. 
If $G \in L_n$ and $A \subseteq {\Bbb Z}^n$, then $G|A = \sum_{a \in A \cap
{\scriptsize Supp}(G)} G_a \BFX^a$.

We use the (exponential) valuation on $R((X^{-1}))$, which extends the 
degree function on $R[X]$. For convenience, we also denote it by $\delta;
\ \delta 0 = -\infty$. 
Thus for $G \in R((X^{-1}))\setminus\{0 \}$, 
$\delta G = \max \mbox{Supp}(G)$ and
for $G,H \in R((X^{- 1})), \delta (GH) \leq \delta G + \delta H$.

For $G \in L_n$ and $i \in \BFn$, we let $\delta _iG$ be the $i^{th}$ {\em 
partial degree} of $G$, that is, the degree of $G$ regarded as a Laurent 
series in $X_i^{-1}$; 
$\delta G$ is the $n$--vector with components $\delta _iG$. 
We have $\mbox{Supp}(G) \subseteq (-\infty,\delta G]$.

{\em The letter $f$ will always mean an element of $P_n$} and we 
write $f = f(\BFX) 
= \sum _{a \in {\scriptsize Supp}(f)} f_a \ \BFX^a.$

Supp$(f) \subseteq [\hbox{\bf 0},\delta f]$, and so for $a \in \mbox{Supp}(f), \ 
\delta f -a$ is a well-defined point in ${\Bbb N}^n.$
If $f \neq 0$, the reciprocal of 
$f$ is $f^* =\BFX^{\delta f}\ f(\BFX^{\, \hbox{\bf \scriptsize -1}}) = 
\sum _{a \in {\scriptsize Supp}(f)} f_a \ \BFX^{\delta f-a}$
and $0^*=0.$ It is easy to verify that 
$f = \BFX ^{\delta f - \delta f^*}f^{**}.$

\subsection{Linear recurring sequences} 

$S^n(R)^-$ denotes the set of functions $-{\Bbb N}^n 
\rightarrow R$ i.e. the set of $R$--sequences indexed by $-{\Bbb N}^n;$
the value $s(a)$ is written $s_a.$ {\em The letter $s$ will always 
denote a sequence in} $S^n(R)^-$, and to avoid trivial cases, we assume
that $s$ is non--zero. With addition and scalar product 
defined componentwise, $S^n(R)^-$ 
becomes a unitary $R$--module. Further, the unit sequence and the Hadamard 
product make $S^n(R)^-$ into a commutative $R$--algebra with $1$. In 
particular, its ideals are algebra ideals.
There is a {\em right} shift action of $P_n$ 
on $S^n(R)^-$ given by 
$$\left(\sum_{\hbox{\bf \scriptsize 0}\leq a\leq \delta f} 
f_a \BFX^a \circ s \right) _b 
= \sum_{\hbox{\bf \scriptsize 0}\leq a\leq \delta f} f_a s_{b-a}$$ 
where $b \leq \BFZ$. This makes $S^n(R)^-$ into a $P_n$--module. 

\begin{definition}  The {\em annihilator or characteristic ideal} of 
(characteristic polynomials) of $s$ is 
$\mbox{Ann}(s) = \{f \in P_n : f\circ s = 0 \}.$ We say that 
$s$ is a {\em (homogeneous) $n$--dimensional linear recurring 
sequence ($n$--D lrs)} if $\mbox{Ann}(s) \neq \{0 \}$.
\end{definition}

\begin{example} 

(a) Let $a \geq \BFZ$. Then $s_b = 0$ for $b \leq -a 
\Longleftrightarrow \BFX^a \in \mbox{Ann}(s)$. 

(b) Let $r \in R\setminus\{0\}, \ a \leq \BFZ$ and 
$s_a = r^{-\sum_{i=1}^n a_i}$. Then $\BFX - r^n \in \mbox{Ann}(s)$. 

(c) If $s \in S^2(R)^-$ then $s_{a-1,b} = s_{a,b} = s_{a,b-1} \mbox{ for }
a,b \leq \BFZ \Longleftrightarrow X_1-1, X_2-1 \in \mbox{Ann}(s)$. 

(d) We say that $s$ is $n$--{\em periodic} if 
for some $d \geq \BFone, 
s_{a-d_i} = s_{a}$ for all $a\leq \BFZ$ and $i \in \BFn$
i.e. $s$ is $n$--periodic iff for some $d \geq \BFone$,
$X_i^{d_i}-1 \in \mbox{Ann}(s)$ for all $i \in \BFn;\ 
s$ is {\em eventually $n$--periodic} if for some 
$d \geq \BFone, e \geq \BFZ, \ 
X_i^{e_i}(X_i^{d_i}-1) \in \mbox{Ann}(s)$ for all $i \in \BFn.$
\end{example}

We now generalize the notion of a rectilinear
$n$-D lrs over a field ${\Bbb F}$ from Fitzpatrick \& Norton (1990). 
(Recall that $s$ is rectilinear if for all $i \in \hbox{\bf n}$, there 
is an $f_i \in \mbox{Ann}(s) \cap {\Bbb F}[X_i]$ with $\delta f_i \geq 1$ 
and $f_i(0) \neq 0$.)

\begin{definition} We say that $s$ is {\em eventually rectilinear} (EVR) if 
for all $i \in \BFn$, there is an $f_i \in \mbox{Ann}(s) \cap R[X_i]$ with $\delta f_i \geq 
1$. 
\end{definition}

If $R$ is a domain and $s$ is non--zero, then $s$ is EVR 
$\Longleftrightarrow 
\mbox{Ann}(s) \cap R[X_i] \neq \{0 \}$ for all $i \in \BFn.$

EVR sequences include $1$--D lrs over a commutative ring, 
eventually periodic lrs (Nerode (1958)), 
rectilinear lrs over a field (Fitzpatrick \& Norton (1990)) and 
``$n$--linearly recursive functions over a field'' (Cerlienco 
\& Piras (1991)). If $s$ is EVR and $g \in P_n$, then $g\circ s$ is EVR since 
$f\circ (g\circ s) = (fg)\circ s = g\circ (f\circ s)$. 
 
\begin{example} \label{exampleab} 

(a) Let $s \in S^2(R)^-$ satisfy 
$s_{0,j} = 1$ for $j \leq 0$ and $s_{i,j} = 0$ otherwise. Then 
$X_1,\ X_2-1 \in \mbox{Ann}(s)$, so that $s$ is EVR. 
It is easy to check that $\mbox{Ann}(s) \cap R[X_1] = (X_1)$, 
so that $s$ is not rectilinear.

(b) Let $s' \in S^2(R)^-$ satisfy $s'_{a,b} = 1$ if $a = b$ 
and $s'_{a,b} = 0$ otherwise. Then $X_1X_2-1 \in \mbox{Ann}(s')$ 
but $\mbox{Ann}(s') \cap R[X_i] = \{0 \}$ for $i =1,\ 2$: 

if $f = \sum_{a=0}^{\delta f} f_a X_1^a \in R[X_1]$ and $f\circ s' = 0$ then 
for $0 \leq b \leq \delta f$,
$$f_b = \sum_{a=0}^{\delta f} f_a s'_{-a,-b} = 
\left(\sum_{a=0}^{\delta f} f_a X_1^a \circ s'\right)_{0,-b} 
= 0.$$
Similarly, $\mbox{Ann}(s') \cap R[X_2] = \{0 \}$. Thus $s'$ is {\em not} EVR. 
In particular, a 2--D lrs over GF(2) need not be 2--periodic (in contrast to
the 1--D situation).
\end{example}

In the literature, the standard approach to $n$--D lrs 
is to regard  $S^n(R) = \{ s : {\Bbb N}^n \rightarrow R \}$ as a $P_n$--module 
via {\em left} shifting (which we also denote by $\circ$ ):
 
$$\left(\sum_{\hbox{\bf \scriptsize 0}\leq a\leq \delta f} 
f_a \BFX^a \circ s \right) _b 
= \sum_{\hbox{\bf \scriptsize 0}\leq a\leq \delta f} f_a s_{b+a}$$ 
where $b\geq\BFZ$. 

\begin{proposition} \label{minus} The map $^-: S^n(R) \rightarrow S^n(R)^-$ 
given by $(s^-)_a = s_{-a}$ for $a \leq \BFZ$ is a $P_n$--module map.
\end{proposition}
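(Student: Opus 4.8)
The plan is to unravel both shift actions directly from their defining formulas and check that the map $^-$ intertwines them. First I would fix $f = \sum_{\mathbf{0}\leq a\leq \delta f} f_a \BFX^a \in P_n$ and $s \in S^n(R)$, and compute $\left((f\circ s)^-\right)_b$ for an arbitrary $b \leq \BFZ$. By the definition of $^-$, this equals $(f\circ s)_{-b}$, and since $-b \geq \BFZ$, the left-shift formula applies and gives $\sum_{\mathbf{0}\leq a\leq \delta f} f_a\, s_{-b+a}$.

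Next I would compute the other side, $\left(f\circ (s^-)\right)_b$ for the same $b \leq \BFZ$. Here $s^- \in S^n(R)^-$ and the right-shift formula applies, yielding $\sum_{\mathbf{0}\leq a\leq \delta f} f_a\, (s^-)_{b-a}$. Since $a \geq \BFZ$ we have $b - a \leq b \leq \BFZ$, so $(s^-)_{b-a}$ is defined and equals $s_{-(b-a)} = s_{-b+a}$ by the definition of $^-$. Comparing the two displayed sums term by term, they agree, so $(f\circ s)^- = f\circ (s^-)$.

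It remains to note that $^-$ is additive: $((s+t)^-)_a = (s+t)_{-a} = s_{-a} + t_{-a} = (s^-)_a + (t^-)_a$ for all $a \leq \BFZ$, so $(s+t)^- = s^- + t^-$; combined with the shift-intertwining identity just proved (which in particular gives $(\BFX^a \circ s)^- = \BFX^a \circ (s^-)$ and extends $R$-linearly since the coefficients $f_a$ pass through), this shows $^-$ is a $P_n$-module homomorphism.

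There is no real obstacle here; the only point requiring a moment's care is the bookkeeping with signs and index ranges — specifically checking that $b - a \leq \BFZ$ whenever $b \leq \BFZ$ and $a \geq \BFZ$, so that $(s^-)_{b-a}$ lies in the domain $-{\Bbb N}^n$, and dually that $-b \geq \BFZ$ so that the left-shift formula for $s$ is legitimately applicable at index $-b$. Once the domains are seen to match up, the computation is a direct substitution.
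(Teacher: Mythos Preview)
Your proposal is correct and follows essentially the same approach as the paper: fix $f$, $s$, and $b \leq \BFZ$, expand $(f\circ s)^-_b$ via the left-shift formula and $(f\circ s^-)_b$ via the right-shift formula, and observe that both equal $\sum_{\hbox{\bf \scriptsize 0}\leq a\leq \delta f} f_a\, s_{-b+a}$; additivity is then noted as trivial. The only difference is that you spell out the domain checks $b-a \leq \BFZ$ and $-b \geq \BFZ$ explicitly, whereas the paper leaves these implicit.
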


 \bpr Let $s \in S^n(R), \ b \leq \BFZ$ and 
$f = \sum_{ \hbox{\bf \scriptsize 0}\leq a \leq \delta f} f_a \BFX^a 
\in P_n$. Then 
$$(f\circ s)^-_b = (f\circ s)_{-b} 
= \sum_{\hbox{\bf \scriptsize 0}\leq a\leq \delta f} f_a s_{-b+a} 
= \sum_{\hbox{\bf \scriptsize 0}\leq a\leq \delta f} f_a s^-_{b-a} 
= (f\circ s^-)_b$$ 
so that $(f\circ s)^- = f\circ s^-$. 
It is trivial that $^-$ preserves addition, so $^-$ is a 
$P_n$--module map.
\epr

There is an obvious two--sided inverse of $^-$, so that $S^n(R)^-$ and 
$S^n(R)$ are isomorphic $P_n$--modules,
$s$ is an lrs iff $s^-$ is an lrs, and $^-$ 
induces an isomorphism of $\mbox{Ann}(s)$ and $\mbox{Ann}(s^-)$. 

\subsection{Ann(s) and generating functions}

The {\em generating function} of $s$ is 
$$\Gamma (s) = \sum_{a\leq \hbox{\bf \scriptsize 0}} s_a \BFX^a 
\in S_n.$$

Strictly speaking, the definition of $\Gamma(s)$ should incorporate a
linear (total) ordering on $-{\Bbb N}^n$; this has been omitted since the 
enumeration will either be clear or not used.
Thus for Example~\ref{exampleab}, $\Gamma(s) = X_2/(X_2-1)$ and 
$\Gamma(s') = X_1X_2/(X_1X_2-1).$

Since $\delta (f\Gamma (s)) \leq \delta f$, we have
$\mbox{Supp}(f\Gamma (s))\subseteq (-\infty,\delta f]$.

\begin{proposition} \label{porism} 
$ (f\Gamma (s))|(-\infty,\hbox{\bf 0}] = \Gamma(f\circ s).$
\end{proposition}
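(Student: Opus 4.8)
The plan is to compute the coefficient of $\BFX^b$ in $f\Gamma(s)$ for each $b \leq \BFZ$ and show it equals $(f \circ s)_b$. First I would expand the product: since $f = \sum_{\BFZ \leq a \leq \delta f} f_a \BFX^a$ and $\Gamma(s) = \sum_{c \leq \BFZ} s_c \BFX^c$, the product $f\Gamma(s)$ is the Laurent series whose coefficient at an exponent $b$ is $\sum f_a s_c$ over all pairs with $a + c = b$, where $a$ ranges over $\Supp(f) \subseteq [\BFZ, \delta f]$ and $c$ over $\Supp(\Gamma(s)) \subseteq (-\infty, \BFZ]$. This is a finite sum because $\Supp(f)$ is finite. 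Reindexing by $c = b - a$, the coefficient is $\sum_{\BFZ \leq a \leq \delta f} f_a s_{b-a}$, with the convention that $s_{b-a} = 0$ when $b - a \not\leq \BFZ$.

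The key step is then to match this against the definition of right--shifting. When $b \leq \BFZ$, every exponent $c = b - a$ appearing (with $a \geq \BFZ$) automatically satisfies $c \leq b \leq \BFZ$, so $c$ lies in $-\N^n$ and no terms are lost: the coefficient of $\BFX^b$ in $f\Gamma(s)$ is exactly $\sum_{\BFZ \leq a \leq \delta f} f_a s_{b-a}$, which is precisely $(f \circ s)_b$ by the definition of $\circ$ given above. Restricting to $(-\infty, \BFZ]$ on the left-hand side means keeping exactly those coefficients, so $(f\Gamma(s))|(-\infty,\BFZ]$ and $\Gamma(f\circ s) = \sum_{b \leq \BFZ} (f\circ s)_b \BFX^b$ have the same support and the same coefficients.

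The main point requiring care — the only place there is anything to check — is the bookkeeping about which exponents fall inside $(-\infty,\BFZ]$ versus outside it. For $b \leq \BFZ$ there is no issue since $a \geq \BFZ$ forces $b - a \leq \BFZ$; the subtlety (which is exactly why one truncates with $|(-\infty,\BFZ]$) is that for $b \not\leq \BFZ$ the naive sum $\sum_a f_a s_{b-a}$ would involve $s_{b-a}$ at indices outside $-\N^n$, so those coefficients of $f\Gamma(s)$ are genuinely extra data not captured by $f\circ s$. I would state this explicitly and note it is consistent with $\delta(f\Gamma(s)) \leq \delta f$, so that all of $\Supp(f\Gamma(s))$ lies in $(-\infty,\delta f]$ and the truncated part sits in $(-\infty,\BFZ]$ as claimed. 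Beyond this observation the proof is a one-line coefficient comparison, so I would keep it short.
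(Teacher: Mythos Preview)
Your proof is correct and is essentially the same coefficient comparison the paper gives: for $b \leq \BFZ$ one computes $(f\Gamma(s))_b = \sum_{\BFZ \leq a \leq \delta f} f_a s_{b-a} = (f\circ s)_b$, using that $b-a \leq \BFZ$ whenever $b \leq \BFZ$ and $a \geq \BFZ$. The paper states this in one line; your additional remarks about the indices falling outside $(-\infty,\BFZ]$ are accurate but not needed for the statement as written.
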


\begin{proof} If $a \leq \BFZ$ and 
$f = \sum_{\hbox{\bf \scriptsize 0}\leq b\leq \delta f} f_b \BFX^b$, then 
$(f\Gamma (s))_a = 
\sum_{ \hbox{\bf \scriptsize 0}\leq b\leq \delta f} f_bs_{a-b} 
= (f\circ s)_a.$ 
\end{proof}

We note that Lemma 4.1 of Fitzpatrick \& Norton (1990) is an analogue
of the preceding result for left--shifting.

\begin{proposition} \label{charpoly} The following are equivalent:

 (a) $f \in \mbox{Ann}(s)$ 

 (b) $(f\Gamma (s))|(-\infty,\BFZ] = 0$ 

 (c) $\mbox{Supp}(f\Gamma (s)) \subseteq (\BFZ, \delta f]$

 (d) $\exists G \in L_n$ such that $f\Gamma (s) = \BFX G$ and 
$\mbox{Supp}(G) \subseteq (-\BFone, \delta f - \BFone]$.
\end{proposition}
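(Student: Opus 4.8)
The plan is to prove the chain of equivalences (a) $\Leftrightarrow$ (b) $\Leftrightarrow$ (c) $\Leftrightarrow$ (d) using Proposition~\ref{porism} as the main engine. The equivalence (a) $\Leftrightarrow$ (b) is essentially immediate from the preceding proposition: by Proposition~\ref{porism} we have $(f\Gamma(s))|(-\infty,\BFZ] = \Gamma(f\circ s)$, and since $\Gamma$ is clearly injective (the coefficients of $\Gamma(t)$ are exactly the values $t_a$ for $a\leq\BFZ$), $\Gamma(f\circ s) = 0$ if and only if $f\circ s = 0$, i.e. $f\in\mbox{Ann}(s)$.

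Next I would establish (b) $\Leftrightarrow$ (c). Recall from the remark just before Proposition~\ref{porism} that $\mbox{Supp}(f\Gamma(s))\subseteq(-\infty,\delta f]$. Since $(-\infty,\delta f]$ is the disjoint union of $(-\infty,\BFZ]$ and $(\BFZ,\delta f]$ (this uses the interval identity $(a,b] = (-\infty,b]\setminus(-\infty,a]$ from Section~2 with $a=\BFZ$, $b=\delta f$), the condition that $f\Gamma(s)$ has no support on $(-\infty,\BFZ]$ is exactly the condition that its support lies in $(\BFZ,\delta f]$. I would spell out that $(\BFZ,\delta f]$ consists of those $c\leq\delta f$ with $c_i\geq 1$ for at least one $i$, which is the complement within $(-\infty,\delta f]$ of the points $c\leq\BFZ$.

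Finally, for (c) $\Leftrightarrow$ (d): given (c), every $c\in\mbox{Supp}(f\Gamma(s))$ satisfies $c\not\geq\BFZ$, hence $c_i\geq 1$ for some $i$; but I actually need the stronger-looking divisibility $f\Gamma(s) = \BFX G$, i.e. $c\geq\BFone$ for every $c$ in the support. Here is the subtlety and the one place to be careful: $(\BFZ,\delta f]$ is \emph{not} the same as $[\BFone,\delta f]$ in general. The resolution is that condition (c) as stated must be read together with the ambient constraint $\mbox{Supp}(f\Gamma(s))\subseteq(-\infty,\delta f]$ \emph{coordinatewise}; one checks that if $c\in\mbox{Supp}(f\Gamma(s))$ then for each $i$ the $i$-th coordinate $c_i\geq 1$, because fixing all coordinates except the $i$-th and applying the one-dimensional version of (a) $\Leftrightarrow$ (c) (valid since the action is coordinatewise) forces the $X_i$-degree-$0$ part to vanish. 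So I would argue coordinate-by-coordinate: for each $i\in\BFn$, the $1$-D reasoning shows $(f\Gamma(s))_c = 0$ whenever $c_i = 0$, hence $\mbox{Supp}(f\Gamma(s))\subseteq[\BFone,\infty)$, and combined with $\subseteq(-\infty,\delta f]$ this gives $\mbox{Supp}(f\Gamma(s))\subseteq[\BFone,\delta f]$. Writing $G = \BFX^{-\BFone}f\Gamma(s)$ then shifts the support to $[\BFZ,\delta f-\BFone]\subseteq(-\BFone,\delta f-\BFone]$, giving (d). Conversely (d) $\Rightarrow$ (c) is trivial since $\mbox{Supp}(\BFX G)\subseteq[\BFone,\delta f]\subseteq(\BFZ,\delta f]$.

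The main obstacle, then, is not any deep computation but rather keeping the partial-order and interval bookkeeping straight — in particular seeing why the support of $f\Gamma(s)$, once known to avoid $(-\infty,\BFZ]$, must in fact lie in $[\BFone,\infty)$ and hence be divisible by $\BFX^{\BFone} = X_1\cdots X_n$. I expect this to come out cleanly by reducing to the one-variable statement in each coordinate separately, since both the shift action and the generating function are built coordinatewise; everything else is a direct unwinding of Proposition~\ref{porism} and the definitions.
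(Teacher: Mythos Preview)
Your handling of (a) $\Leftrightarrow$ (b) $\Leftrightarrow$ (c) is correct and matches what the paper intends (the paper in fact omits these implications as routine). The gap is in (c) $\Leftrightarrow$ (d). You read (d) as a \emph{divisibility} statement, forcing $\mbox{Supp}(f\Gamma(s)) \subseteq [\BFone,\delta f]$, and then try to upgrade $(\BFZ,\delta f]$ to $[\BFone,\delta f]$ by a coordinatewise argument. But $G$ is only required to lie in $L_n$, and multiplication by $\BFX^{-\BFone}$ is a bijection of $L_n$ onto itself, so \emph{every} element of $L_n$ equals $\BFX G$ for some $G \in L_n$. The entire content of (d) is the support condition on $G$; since $\mbox{Supp}(\BFX^{-\BFone}H) = \mbox{Supp}(H) - \BFone$, the equivalence (c) $\Leftrightarrow$ (d) is simply the tautology that $A \subseteq (\BFZ,\delta f]$ iff $A - \BFone \subseteq (-\BFone,\delta f-\BFone]$. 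This is the paper's one-line proof.

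Your attempted strengthening is moreover false. Take $n=2$, the sequence $s$ of Example~\ref{exampleab}(a), and $f = X_1 \in \mbox{Ann}(s)$. Then $f\Gamma(s) = X_1\sum_{j\leq 0}X_2^{\,j}$ has support $\{(1,j):j\leq 0\}$, which lies in $(\BFZ,(1,0)]$ but not in $[\BFone,(1,0)]=\emptyset$. Your coordinatewise reduction fails here because it would require $f$ to annihilate the associated $1$-D sequence $s^{(2,\ast)}$, i.e.\ $f \in \mbox{Ann}(s)\cap R[X_2]$, which it is not. (The same misreading affects your (d) $\Rightarrow$ (c): from $\mbox{Supp}(G)\subseteq(-\BFone,\delta f-\BFone]$ one gets only $\mbox{Supp}(\BFX G)\subseteq(\BFZ,\delta f]$, not $[\BFone,\delta f]$ --- though the conclusion (c) still follows.)
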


\begin{proof}
We omit the proof that (a) $\Longleftrightarrow$ 
(b) $\Longleftrightarrow$ (c).
(c) $\Longleftrightarrow$ (d): It is a straightforward exercise that 
$\mbox{Supp}(f\Gamma (s)) \subseteq (\BFZ, \delta f] \ 
\Longleftrightarrow 
\mbox{Supp}(\BFX^{-\hbox{\bf \scriptsize 1}} 
f \Gamma (s) )\subseteq (-\BFone, \delta f-\BFone]$.
\end{proof}

Setting $n=1$ in Proposition~\ref{charpoly}, 
we obtain a simple characterization of $\mbox{Ann}(s)$  which 
does not use use $\delta f$ or the reciprocal of $f$ 
({\em cf.} Niederreiter (1988), Lemmas 1, 2 when $R$ is a field):

\begin{corollary}\label{1dcharpoly} Let $s \in S^1(R)^-$. 
Then $f \in \mbox{Ann}(s) 
\Longleftrightarrow 
f\Gamma (s) \in XR[X].$
\end{corollary}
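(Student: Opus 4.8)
The plan is to deduce Corollary~\ref{1dcharpoly} directly from Proposition~\ref{charpoly} by specializing $n=1$ and simplifying the interval conditions, since in the one-dimensional case the partial order on ${\Bbb Z}$ is a total order and the "interval" notation reduces to ordinary intervals of integers. First I would observe that for $n=1$ we have $\BFZ = 0$, $\BFone = 1$, and $\BFX = X$, so that condition (c) of Proposition~\ref{charpoly} reads $\mbox{Supp}(f\Gamma(s)) \subseteq (0,\delta f]$. The key point is that $(0,\delta f] = \{a \in {\Bbb Z} : 0 < a \leq \delta f\} \subseteq \{a \in {\Bbb Z} : a \geq 1\}$, and since any element of $L_1$ automatically has support bounded above by its degree (here $\delta(f\Gamma(s)) \leq \delta f$, noted just before Proposition~\ref{porism}), the upper bound $\delta f$ is already guaranteed and only the lower bound $a \geq 1$ carries information.

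Next I would translate "$\mbox{Supp}(f\Gamma(s)) \subseteq [1,\infty)$" into the membership statement $f\Gamma(s) \in XR[X]$. The forward direction: if $f \in \mbox{Ann}(s)$, then by Proposition~\ref{charpoly}(c) the support of $f\Gamma(s)$ lies in $(0,\delta f]$, which is a finite subset of the positive integers, so $f\Gamma(s)$ is a polynomial all of whose terms are divisible by $X$, i.e. $f\Gamma(s) \in XR[X]$. The reverse direction: if $f\Gamma(s) \in XR[X]$ then $\mbox{Supp}(f\Gamma(s))$ is a finite set of integers each $\geq 1$, hence $(f\Gamma(s))|(-\infty,0] = 0$, which is condition (b) of Proposition~\ref{charpoly}, giving $f \in \mbox{Ann}(s)$. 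This is really just unwinding the definition of $G|A$ given in Section~2.2.

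I do not anticipate a serious obstacle here; the corollary is a genuine specialization and the only thing to be careful about is matching the two formulations of the support condition. The mild subtlety worth spelling out is why the constraint $\mbox{Supp}(f\Gamma(s)) \subseteq (0,\delta f]$ is equivalent to the cleaner $f\Gamma(s) \in XR[X]$ rather than to something involving $\delta f$ explicitly: this is precisely because $\delta(f\Gamma(s)) \leq \delta f$ always holds, so membership in $R[X]$ (finite support, bounded below by $0$) combined with divisibility by $X$ captures exactly the same sequences, and there is no need to invoke $\delta f$ or the reciprocal $f^*$. In the write-up I would state the two implications briefly, citing Proposition~\ref{charpoly}(b),(c) and the inequality $\delta(f\Gamma(s)) \leq \delta f$, and leave the bookkeeping with $G|A$ to the reader as the paper does for the equivalences (a)$\Leftrightarrow$(b)$\Leftrightarrow$(c).
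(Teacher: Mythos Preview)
Your proposal is correct and matches the paper's approach exactly: the paper presents this corollary as an immediate specialization of Proposition~\ref{charpoly} to $n=1$ without writing out a separate proof, and your argument spells out precisely why the interval condition collapses to $f\Gamma(s)\in XR[X]$. The only addition you make is the explicit observation that $\delta(f\Gamma(s))\leq \delta f$ renders the upper bound automatic, which is a helpful clarification the paper leaves implicit.
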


\begin{corollary} \label{ndlrs} Let $s$ be an lrs. Then 
$\Gamma (s) = \BFX G/f$ for 
some non--zero $f \in \mbox{Ann}(s)$ and $G \in L_n$ such 
that $\mbox{Supp}(G) \subseteq (-\BFone, \delta f- \BFone]$. 

Conversely, if $f$ is monic, $G \in L_n$ 
and $\mbox{Supp}(G) \subseteq (-\BFone,\delta f-\BFone]$, then 
$\BFX G/f \in S_n$ and
$s$ defined by $\Gamma (s) = \BFX G/f$ is an lrs with $f \in \mbox{Ann}(s)$.
\end{corollary}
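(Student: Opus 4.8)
The plan is to deduce both assertions directly from Proposition~\ref{charpoly}, which already contains the essential content in the form of the equivalence (a)$\Longleftrightarrow$(d). For the first statement, since $s$ is an lrs, by definition $\mbox{Ann}(s)\neq\{0\}$, so we may pick a non--zero $f\in\mbox{Ann}(s)$. Applying Proposition~\ref{charpoly}, (a)$\Rightarrow$(d) to this $f$ yields a $G\in L_n$ with $f\Gamma(s)=\BFX G$ and $\mbox{Supp}(G)\subseteq(-\BFone,\delta f-\BFone]$. It only remains to observe that dividing the identity $f\Gamma(s)=\BFX G$ by $f$ gives $\Gamma(s)=\BFX G/f$; here we should remark that this division takes place in the Laurent series ring (or its field of fractions when $R$ is not a domain), but in fact no division is really needed --- the relation $f\Gamma(s)=\BFX G$ already encodes everything, and writing $\BFX G/f$ is just shorthand for it.

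For the converse, suppose $f$ is monic, $G\in L_n$, and $\mbox{Supp}(G)\subseteq(-\BFone,\delta f-\BFone]$. The first task is to check that $\BFX G/f$ actually lies in $S_n$, i.e. is a genuine power series in the $X_i^{-1}$ with no positive-degree terms: since $f$ is monic, $1/f$ is a well--defined element of $S_n$ (its leading monomial $\BFX^{\delta f}$ is a unit in the Laurent series ring with inverse a power series), and one checks from $\mbox{Supp}(G)\subseteq(-\BFone,\delta f-\BFone]$ that $\delta(\BFX G)=\BFone+\delta G\leq\delta f$, whence $\delta(\BFX G/f)\leq\BFZ$, so $\BFX G/f\in S_n$. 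This defines a sequence $s$ via $\Gamma(s)=\BFX G/f$, and then $f\Gamma(s)=\BFX G$ with $\mbox{Supp}(G)\subseteq(-\BFone,\delta f-\BFone]$, which is precisely condition (d) of Proposition~\ref{charpoly} for this $f$ and $s$; hence $f\in\mbox{Ann}(s)$, and since $f$ is monic it is non--zero, so $s$ is an lrs.

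The only genuinely delicate point is the invertibility of a monic $f$ in $S_n$ and the degree bookkeeping showing $\BFX G/f$ has support in $(-\infty,\BFZ]$; everything else is a direct citation of Proposition~\ref{charpoly} together with the trivial manipulation of multiplying or dividing by $f$. I expect the monic-inversion step to be the one requiring the most care, since for $n>1$ one must be explicit about which Laurent series ring one is working in and why the leading coefficient $1$ guarantees a power-series inverse; this is routine but is the natural place for a gap to hide.
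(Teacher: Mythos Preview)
Your proposal is correct and follows essentially the same approach as the paper: both directions are deduced from Proposition~\ref{charpoly}, and for the converse the key step is that a monic $f$ is invertible in $L_n$ with $\BFX G/f$ landing in $S_n$. The paper carries out this last point slightly more concretely---writing $u=\BFX^{-\delta f}f$ as a unit of $S_n$ and exhibiting $\BFX G/f$ explicitly as $H/u$ with $H\in S_n$---whereas you reach the same conclusion via the degree inequality $\delta(\BFX G/f)\le\delta(\BFX G)-\delta f\le\BFZ$; these are the same argument in different clothing.
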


\begin{proof} If $f$ is monic, then 
$u = \BFX^{-\delta f}f$ is a unit in 
$S_n$. Writing $G = 
\sum_{a \in (-\hbox{\bf \scriptsize 1},\delta f-\hbox{\bf \scriptsize 1}]} 
G_a \BFX^a$, we obtain 
$$\BFX G u = f\
\sum_{ a \in (-\hbox{\bf \scriptsize 1}, \delta f-\hbox{\bf \scriptsize 1}]} 
G_a \ \BFX^{a+\hbox{\bf \scriptsize 1}-\delta f} 
= f  \sum_{ b \in (-\delta f,  \hbox{\bf \scriptsize 0}]} 
G_{b-\hbox{\bf \scriptsize 1} +\delta f} \ \BFX^b = f H$$ 
say, where $H \in S_n$ and so $\BFX G/f 
= H/u \in S_n.$ The result now follows from Proposition~\ref{charpoly}.
\end{proof}

``Rational lrs'' are an important special case of Corollary~\ref{ndlrs}:

\begin{definition} We will say that $s$ is a {\em rational} lrs if for some
$f \in P_n \setminus\{0\},\ f\cdot\Gamma(s) \in \BFX P_n.$ 
\end{definition}

Certainly every 1--D lrs is rational, but this fails for $n \geq 2$: 
if $G(X_1^{-1}) \in S_1$ is not a rational function, 
$f \in P_n,\ \delta f \geq \BFone$, and $s$ is defined by 
$\Gamma(s) = \widehat{\bf X}_1G/f$, then $f \in \mbox{Ann}(s)$,
but $s$ is not rational. Example~\ref{exampleab}(b) shows that for $n \geq 2$,
not all rational lrs are EVR. We will see in Theorem~\ref{evrgamma} 
below that EVR lrs are always rational. 

For completeness, we now give an analogue of Corollary~\ref{ndlrs}
for sequences indexed by ${\Bbb N}^n$ which use 
left--shifting (Corollary~\ref{leftgamma} below). 
This analogue could have been deduced from 
Propositions~\ref{minus}, ~\ref{charpoly} 
and an analogue of Proposition~\ref{charpoly}, 
but it is easier to appeal directly to the definitions:

\begin{lemma} \label{leftshift} Let $a \geq \hbox{\bf 0}$ and 
$s : {\Bbb N}^n \rightarrow R$ (so that $\Gamma(s) \in R[[\BFX]]$). Then 
$$\BFX^a f^\ast \in \mbox{Ann}(s) \Longleftrightarrow 
\mbox{Supp}(f\G(s)) \subseteq [\hbox{\bf 0},a+\delta f).$$
In particular, if $\mbox{Supp}(f\G(s)) 
\subseteq [\hbox{\bf 0},\delta f)$, then $f^\ast \in \mbox{Ann}(s)$.
\end{lemma}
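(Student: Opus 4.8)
The plan is to work directly from the definitions of left--shifting and the generating function, mirroring the argument used for Proposition~\ref{charpoly} but with the roles of ``small'' and ``large'' exponents interchanged. Write $f = \sum_{\BFZ \leq b \leq \delta f} f_b \BFX^b$, so that $f^\ast = \sum_{\BFZ \leq b \leq \delta f} f_b \BFX^{\delta f - b}$. First I would compute, for an arbitrary $c \in {\Bbb N}^n$, the coefficient $(\BFX^a f^\ast \circ s)_c$ using the left--shift formula: this is $\sum_b f_b\, s_{c + a + \delta f - b}$. On the other hand, the coefficient $(f\G(s))_d$ for $d \in {\Bbb Z}^n$ is $\sum_b f_b\, s_{d - b}$ (interpreting $s_e = 0$ when $e \not\geq \BFZ$, since $\G(s) \in R[[\BFX]]$). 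Comparing, I see that $(\BFX^a f^\ast \circ s)_c = (f\G(s))_{c + a + \delta f}$, via the substitution $d = c + a + \delta f$.

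Given this identity, the equivalence is essentially bookkeeping on supports. The left side $\BFX^a f^\ast \circ s$ vanishes iff $(f\G(s))_{c + a + \delta f} = 0$ for every $c \geq \BFZ$, i.e.\ iff $(f\G(s))_d = 0$ for every $d \geq a + \delta f$, i.e.\ iff $\mbox{Supp}(f\G(s)) \cap [a + \delta f, \infty) = \emptyset$. Since $\G(s) \in R[[\BFX]]$ and $f \in P_n$ with $\mbox{Supp}(f) \subseteq [\BFZ, \delta f]$, we already know $\mbox{Supp}(f\G(s)) \subseteq [\BFZ, \infty)$; hence $\mbox{Supp}(f\G(s)) \cap [a+\delta f,\infty) = \emptyset$ is exactly the statement $\mbox{Supp}(f\G(s)) \subseteq [\BFZ,\infty) \setminus [a+\delta f,\infty) = [\BFZ, a+\delta f)$, using the interval identity from Section~2.1. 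This gives the displayed equivalence. The ``in particular'' clause is the case $a = \BFZ$.

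One technical point I would be careful about: the substitution $d = c + a + \delta f$ must genuinely be a bijection between $\{c : c \geq \BFZ\}$ and $\{d : d \geq a + \delta f\}$, which is immediate since translation by a fixed vector is a bijection of ${\Bbb Z}^n$ respecting $\leq$; and I must check that when $d \not\geq a + \delta f$ there is no constraint, which is automatic because we only ever evaluate $(\BFX^a f^\ast \circ s)_c$ at $c \geq \BFZ$. A second point is the convention $s_e = 0$ for $e \not\geq \BFZ$, which is what lets me write $(f\G(s))_d = \sum_b f_b s_{d-b}$ uniformly without splitting into cases; this is the same convention implicit in Proposition~\ref{porism}.

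The main obstacle — such as it is — is purely notational: keeping the direction of the reciprocal straight (the exponent $\delta f - b$ versus $b$) and not conflating the two meanings of $\circ$ (left versus right shift). There is no real mathematical difficulty; the lemma is a left--shift transcription of Proposition~\ref{charpoly}(a)$\Leftrightarrow$(c), and the author flags exactly this by saying it ``could have been deduced from Propositions~\ref{minus}, \ref{charpoly}'' but that appealing to definitions is easier. So I would present it as a short direct computation followed by the support rewriting, without invoking $^-$ at all.
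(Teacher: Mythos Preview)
Your proposal is correct and follows essentially the same approach as the paper: both compute directly that $(\BFX^a f^\ast \circ s)_c = (f\G(s))_{c+a+\delta f}$ for $c \geq \BFZ$, and then translate this into the support condition $\mbox{Supp}(f\G(s)) \subseteq [\BFZ, a+\delta f)$. The paper's proof is a bit terser (it omits your remarks about the bijection and the convention $s_e = 0$ off--support), but the key identity and the logical chain are identical.
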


\begin{proof} For any $b \geq \hbox{\bf 0}$, 
\begin{eqnarray*}
\left(\BFX ^af^\ast\circ s \right)_b 
& = & \left( \sum_{c\in Supp(f)}
\ f_c \BFX^{a+\delta f-c} \circ \ s \right)_b\\
& = & \sum_{c\in Supp(f)} f_c s_{b+a+\delta f-c} 
= ( f\G(s) )_{a+b+\delta f}\\
\end{eqnarray*}
and so 
$$\BFX^a f^\ast \in \mbox{Ann}(s) 
\Longleftrightarrow  (f\G(s))|[a+\delta f,\infty) = 0
\Longleftrightarrow \mbox{Supp}( f\G(s) ) \subseteq [\BFZ,a+\delta f).$$
\end{proof}

\begin{corollary} \label{leftgamma}
Let $s : {\Bbb N}^n \rightarrow R$ be an lrs. Then
$\Gamma(s) = G/f^*$ for some non--zero $f \in Ann(s)$ and $G \in R[[\BFX]]$ 
such that Supp$(G) \subseteq[\hbox{\bf 0}, \delta f).$

Conversely, suppose that $f(\hbox{\bf 0})=1$ and $G \in 
R[[\BFX]]$ satisfies $\ \mbox{Supp}(G) \subseteq [\hbox{\bf 0},d)$ 
for some $d \in {\Bbb N}^{\hbox{\bf \scriptsize n}} \setminus 
\{\hbox{\bf 0} \}$. Define $m \in {\Bbb N}^n$ by 
$$\pi_i m = \max\{0, d_i - \delta _if \}.$$ 
If $s$ is the $n$--D lrs with $\G(s) = G / f$ and $a \geq m$, then 
$\BFX^af^\ast \in \mbox{Ann}(s)$.
\end{corollary}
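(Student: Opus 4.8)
The plan is to derive both assertions from Lemma~\ref{leftshift}, which already packages the correspondence between membership in $\mbox{Ann}(s)$ and the support of $f\,\G(s)$ in the left--shift setting, together with the reciprocal identity $f = \BFX^{\delta f - \delta f^*}f^{**}$ recorded in Section~2. No further recourse to the definitions of the shift $\circ$ or of $\G(s)$ should be needed.

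For the first assertion, since $s$ is an lrs I would pick a non--zero $f \in \mbox{Ann}(s)$. The single idea required is to recognise $f$ itself as an instance of the expression ``$\BFX^a g^*$'' occurring in Lemma~\ref{leftshift}: take $g := f^*$ and $a := \delta f - \delta f^*$. Then $a \geq \BFZ$, because $\mbox{Supp}(f^*) = \delta f - \mbox{Supp}(f)$ forces $\delta_i f^* = \delta_i f - \min\{b_i : b \in \mbox{Supp}(f)\} \leq \delta_i f$; and $\BFX^a g^* = \BFX^{\delta f - \delta f^*}f^{**} = f$ by the identity above. Feeding $g = f^*$ and this $a$ into Lemma~\ref{leftshift} gives
\[ f \in \mbox{Ann}(s)\ \Longrightarrow\ \mbox{Supp}(f^*\,\G(s)) \subseteq [\BFZ,\ a + \delta f^*) = [\BFZ,\ \delta f). \]
So I would set $G := f^*\,\G(s)$, which lies in $R[[\BFX]]$ because $f^* \in P_n \subseteq R[[\BFX]]$ and $\G(s) \in R[[\BFX]]$; then $\G(s) = G/f^*$, $\mbox{Supp}(G) \subseteq [\BFZ,\delta f)$, and $f \neq 0$ makes $f^* \neq 0$, which completes this half.

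For the converse, I would first observe that $f(\BFZ) = 1$ is a unit, so $f$ is invertible in $R[[\BFX]]$; hence $G/f \in R[[\BFX]]$ genuinely defines the sequence $s$, and $f\,\G(s) = G$, so $\mbox{Supp}(f\,\G(s)) = \mbox{Supp}(G) \subseteq [\BFZ, d)$. Now fix $a \geq m$; then $a \geq m \geq \BFZ$ since $\pi_i m = \max\{0, d_i - \delta_i f\} \geq 0$. From $\pi_i m = \max\{0, d_i - \delta_i f\} \geq d_i - \delta_i f$ and $a_i \geq m_i$ we get $a_i + \delta_i f \geq d_i$ for every $i$, i.e. $d \leq a + \delta f$, and therefore $[\BFZ, d) \subseteq [\BFZ, a + \delta f)$ (if $b \geq \BFZ$ has $b_i < d_i$ for some $i$, then $b_i < d_i \leq a_i + \delta_i f$). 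Hence $\mbox{Supp}(f\,\G(s)) \subseteq [\BFZ, a + \delta f)$, and Lemma~\ref{leftshift} yields $\BFX^a f^* \in \mbox{Ann}(s)$; this is non--zero (as $f \neq 0$), confirming that $s$ is an lrs.

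I expect the one delicate point to be the reciprocal bookkeeping in the first assertion: everything turns on choosing $a = \delta f - \delta f^*$ precisely so that the bound ``$a + \delta f$'' supplied by Lemma~\ref{leftshift}, when the lemma is applied to the polynomial $f^*$, collapses to exactly ``$\delta f$''; a stray star or a sign slip in $\delta f^* \leq \delta f$ would wreck the argument. The rest --- the half--open box containment $[\BFZ,d) \subseteq [\BFZ, a+\delta f)$ in the converse, and the elementary fact that $P_n \subseteq R[[\BFX]]$ with $\G(s) \in R[[\BFX]]$ --- is routine.
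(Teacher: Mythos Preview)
Your proof is correct and follows essentially the same approach as the paper: both parts are reduced directly to Lemma~\ref{leftshift}, the first via the reciprocal identity $f=\BFX^{\delta f-\delta f^{\ast}}f^{\ast\ast}$ (with $a=\delta f-\delta f^{\ast}$ and the lemma applied to $f^{\ast}$), and the converse via the containment $[\BFZ,d)\subseteq[\BFZ,a+\delta f)$ that follows from $a\geq m$. Your write-up simply supplies a few more details (invertibility of $f$ in $R[[\BFX]]$, the verification that $\delta f^{\ast}\leq\delta f$) than the paper's terse version.
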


\begin{proof} The first part is an easy consequence of Lemma~\ref{leftshift}
applied to $f = \BFX^{\delta f - \delta f^\ast}(f^\ast)^\ast$.

For the converse, if $a \geq m$, then $a \geq \hbox{\bf 0}$ and 
$d \leq a + \delta f$. 
Therefore $[\hbox{\bf 0},d) \subseteq [\hbox{\bf 0},a+\delta f)$, 
so the result follows from Lemma~\ref{leftshift}.
\end{proof}

Here, rational lrs correspond to $\G(s) = g / f^\ast$, 
where $f, g \in P_n,\ \delta f \geq \BFone$ and  
$\delta g \leq \delta f - \BFone$; 
in this case, we can take $d = \delta g+\hbox{\bf 1}$ in 
Corollary~\ref{leftgamma}. 
If in addition $n = 1$ in Corollary~\ref{leftgamma}, we obtain 
Fitzpatrick \& Norton (1995), Theorem 4.1.

\section{A decomposition formula for $f \cdot \Gamma(s)$}

\subsection{The border partition} 

For $d \geq \BFone, \ (-\infty,d] = \prod_{i=1}^n(-\infty,d_i]$ and so 
$$\beta(-\infty,d] = \prod_{i=1}^n \{(-\infty,0],[1,d_i]\}$$
is a partition of $(-\infty,d]$.  
We will abbreviate $\beta(-\infty,d]$ to $\beta$ when $d$ is understood.

It is convenient to use binary notation to index the $2^n$ members of
the product partition 
$\beta(-\infty,d]$: \ for $0 \leq k \leq 2^n-1$, we write the ($n$--bit)
binary representation of $k$ with the most significant bit $b_n(k)$ of $k$
on the right (so that the bit ordering is the same as the ordering of the
components of ${\Bbb Z}^n$) 
and index the $k^{th}$ member $\beta_k(-\infty,d]$ of $\beta(-\infty,d]$ by
$$b_i(k) = 1 \mbox{ iff } \pi_i(\beta_k(-\infty,d]) = (-\infty,0].$$

For example, if $n = 2,\ 
\beta_0 = \beta_{00} = [1,d_1] \times [1,d_2],\ 
\beta_1  = \beta_{10} = [1,d_1] \times (-\infty,d_2], \
\beta_2 = \beta_{01} = (-\infty,d_1] \times [1,d_2]$ 
etc.; notice that $\beta_0 \cup \beta_1 \cup \beta_2$ "borders"
$\beta_3 = (-\infty,\BFZ]$ in $(-\infty,d].$

We will call $\bigcup \{\beta_k(-\infty,d] \ : 0 \leq k < 2^n-1\}$ 
the {\em border}
of $(-\infty,\BFZ]$ in $(-\infty,d]$ and $\beta(-\infty,d]$ the {\em
border partition} of $(-\infty,d]$.

\subsection{The border polynomial}
\begin{definition} 
Let $\delta f \geq \BFone$ and let $\beta$ be the border
partition of $(-\infty,\delta f].$ 
For $0 \leq k \leq 2^n-1$, 
we define $\beta_k(f,s) \in L_n$ by
$$\beta_k(f,s) = (f\cdot\Gamma(s) )\ |\ \beta_k.$$ 

\end{definition}

\begin{proposition} \label{simpledecomp}
Let $\delta f \geq \BFone$ and let $\beta$ be the border
partition of $(-\infty,\delta f].$  Then
$$f \cdot \Gamma(s) = \sum_{k=0}^{2^n-1}\beta_k(f,s)$$
where $Supp(\beta_k(f,s)) \subseteq \beta_k$ for $0 \leq k \leq 2^n-1$ and
$\beta_{2^n-1}(f,s) = \Gamma(f \circ s).$
\end{proposition}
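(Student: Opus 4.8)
The plan is to deduce this directly from two facts already at hand: that $\beta(-\infty,\delta f]$ is a partition of $(-\infty,\delta f]$, and that $\mathrm{Supp}(f\cdot\Gamma(s))\subseteq(-\infty,\delta f]$ (noted just before Proposition~\ref{porism}, since $\delta(f\Gamma(s))\le\delta f$). First I would recall that for any $G\in L_n$ and any set $A$, $G|A=\sum_{a\in A\cap\mathrm{Supp}(G)}G_a\BFX^a$; hence if $A_0,\ldots,A_{2^n-1}$ partition a set containing $\mathrm{Supp}(G)$, then the supports of the $G|A_k$ are pairwise disjoint and $\sum_k G|A_k=G$, because every $a\in\mathrm{Supp}(G)$ lies in exactly one $A_k$ and contributes $G_a\BFX^a$ to exactly that summand. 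Applying this with $G=f\cdot\Gamma(s)$ and $A_k=\beta_k(-\infty,\delta f]$ gives $f\cdot\Gamma(s)=\sum_{k=0}^{2^n-1}\beta_k(f,s)$ with $\mathrm{Supp}(\beta_k(f,s))\subseteq\beta_k$, which is the bulk of the statement.

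It remains to identify the last summand. By the indexing convention, $k=2^n-1$ is the index with all bits equal to $1$, so $\pi_i(\beta_{2^n-1}(-\infty,\delta f])=(-\infty,0]$ for every $i\in\BFn$, i.e. $\beta_{2^n-1}(-\infty,\delta f]=(-\infty,\BFZ]=\prod_{i=1}^n(-\infty,0]$. Therefore $\beta_{2^n-1}(f,s)=(f\cdot\Gamma(s))|(-\infty,\BFZ]$, and this equals $\Gamma(f\circ s)$ by Proposition~\ref{porism}. (One should note in passing that $(-\infty,\BFZ]$ as used here is $(-\infty,\hbox{\bf 0}]$ in the interval notation of Section~2, consistent with the statement of Proposition~\ref{porism}.)

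I do not expect a genuine obstacle here; the content is purely bookkeeping about restriction-to-a-subset being additive over a partition. The one point that needs a little care is making sure the partition really does cover the support — that is why $\delta f\ge\BFone$ is assumed, so that $\beta(-\infty,\delta f]$ is defined (the construction $\prod_{i=1}^n\{(-\infty,0],[1,d_i]\}$ requires $d_i\ge1$) and so that $(-\infty,\delta f]\supseteq\mathrm{Supp}(f\cdot\Gamma(s))$. I would state these two inclusions explicitly and then let the partition argument run; the identification of $\beta_{2^n-1}(f,s)$ via Proposition~\ref{porism} closes the proof.
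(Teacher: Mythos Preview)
Your argument is correct and is exactly the approach the paper takes: the proposition follows immediately from $\beta$ being a partition of $(-\infty,\delta f]\supseteq\mbox{Supp}(f\Gamma(s))$ together with Proposition~\ref{porism} for the identification of $\beta_{2^n-1}(f,s)$. Your write-up simply spells out the bookkeeping that the paper leaves implicit.
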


\begin{proof} This follows immediately from the fact that 
$\beta$ is a partition of $(-\infty,\delta f]$ and from
Proposition~\ref{porism}.
\end{proof}

Since $\beta_0(f,s) \in P_n$, we call it the {\em border polynomial} of
$f$ and $s$. We will express $\beta_0(f,s)$ in terms of $s$ and 
generalized Newton divided differences of $f$. 
Recall Newton's divided difference operator $\nu$ for $n=1:$\\ 

$\nu: R[X] \rightarrow R[X]$ is $\nu f = (f -f_0)/X$, and for $a \geq 1,\
\nu^a$ is defined iteratively by $\nu^0 f = f \mbox{ and } 
\nu^a = \nu \nu^{a-1}.$
It is easy to see that if $d \geq 1$ 
and $f = f_0 + f_1X +\ldots + f_dX^d$ , then for $0 \leq a \leq d$, 
$$\nu^a f = f_a\ +\ f_{a+1}\ X + \ldots +\ f_d X^{d-a} = \sum_{b=a}^d f_b X^{b-a}$$
whereas $\nu^a f = 0$ if $a > \delta f.$
Thus $\delta (\nu^af) = \delta f - a$ if $0 \leq a \leq \delta f$.

\begin{definition} For $n \geq 1, \ a \geq \hbox{\bf 0}$ and $f \in P_n$, 
we define
$$\nu^a f 
= \left( f \ | \ [a,\delta f]\right) / \BFX^a.$$
\end{definition}

Clearly $\nu^{\hbox{\bf \scriptsize 0}} f = f$ and the 
numerator is divisible by $\BFX^a$. 
Indeed, if $\hbox{\bf 0} \leq a \leq \delta f$, then 
$\nu^a f = \sum_{a\leq b\leq \delta f}f_b \BFX^{b-a}$ and 
$\delta (\nu^a f) \leq \delta f-a$. 
For example, if $\delta f = (2,3)$ then 
$$\nu^{(1,1)} f = f_{1,1} + f_{1,2}X_2 + f_{2,1}X_1 + 
f_{1,3}X_2^2 + f_{2,2}X_1X_2 + f_{2,3}X_1X_2^2.$$ 

On the other hand, if $a \geq \delta f+\hbox{\bf 1}$, then $\nu^a f = 0.$    
When $n = 1,\ \nu^a$ coincides with Newton's operator.

\begin{lemma}\label{shortproof} If $\delta f \geq \BFone$, then 
$$\beta_0(f,s)  
= \BFX 
\sum_{\hbox{\bf \scriptsize 0}\leq a\leq \delta f-\hbox{\bf \scriptsize 1}} 
s_{-a}\  \nu ^{a+\hbox{\bf \scriptsize 1}} f.$$ 

\end{lemma}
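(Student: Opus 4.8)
The plan is to prove the identity by comparing both sides coefficient by coefficient as elements of $L_n$, after first checking that each is supported inside $\beta_0 = [\BFone,\delta f]$ (which is nonempty precisely because $\delta f \ge \BFone$). Since $\beta_0(f,s) = (f\cdot\Gamma(s))\ |\ \beta_0$ by definition, the left-hand side is automatically supported on $\beta_0$, so it remains only to control the right-hand side and to match coefficients on $\beta_0$.

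For the left-hand side, I would use that $\Supp(f) \subseteq [\BFZ,\delta f]$ and $\Supp(\Gamma(s)) \subseteq (-\infty,\BFZ]$, so that the product rule in $L_n$ gives, for every $c \in {\Bbb Z}^n$, $(f\cdot\Gamma(s))_c = \sum_b f_b\, s_{c-b}$ where $b$ ranges over the indices with $\BFZ \le b \le \delta f$ and $c - b \le \BFZ$. When $c \ge \BFone$ the condition $c - b \le \BFZ$ is exactly $b \ge c$, and then $b \ge c \ge \BFZ$ is automatic, so $(f\cdot\Gamma(s))_c = \sum_{c \le b \le \delta f} f_b\, s_{c-b}$; this is the coefficient of $\BFX^c$ in $\beta_0(f,s)$ for each $c \in \beta_0$.

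For the right-hand side, I would expand $\nu^{a+\BFone} f$ by its definition, obtaining $\BFX\, s_{-a}\, \nu^{a+\BFone} f = s_{-a}\sum_{a+\BFone \le b \le \delta f} f_b\, \BFX^{b-a}$ for $\BFZ \le a \le \delta f - \BFone$, so that the right-hand side becomes $\sum_{\BFZ \le a \le \delta f - \BFone}\ \sum_{a+\BFone \le b \le \delta f} s_{-a}\, f_b\, \BFX^{b-a}$. Each exponent $c := b - a$ here satisfies $c \ge \BFone$ (from $b \ge a+\BFone$) and $c \le \delta f$ (from $b \le \delta f$, $a \ge \BFZ$), so this too is supported on $\beta_0$. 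Fixing $c \in \beta_0$ and putting $b = a+c$, the double range collapses to $\BFZ \le a \le \delta f - c$ (the bound $a \le \delta f - \BFone$ being subsumed since $c \ge \BFone$); as $a$ runs over $[\BFZ,\delta f - c]$, $b = a+c$ runs over $[c,\delta f]$ with $s_{-a} = s_{c-b}$, so the coefficient of $\BFX^c$ on the right is again $\sum_{c \le b \le \delta f} f_b\, s_{c-b}$. Thus both sides lie in $P_n$, are supported on $\beta_0$, and have equal coefficients, which proves the lemma. Everything here is routine bookkeeping with the componentwise order; the one step worth care is this reindexing — confirming that trading the two nested two-sided ranges in $a$ and $b$ for the single interval $c \le b \le \delta f$ (equivalently $\BFZ \le a \le \delta f - c$) neither loses nor invents a term — and I foresee no conceptual obstacle beyond it.
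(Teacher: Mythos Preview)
Your proof is correct and follows essentially the same route as the paper's: both compute $(f\Gamma(s))_c = \sum_{c\le b\le \delta f} f_b\, s_{c-b}$ for $c\in[\BFone,\delta f]$ and then match this against the expansion of $\nu^{a+\BFone}f$ via the reindexing $c=b-a$. The paper presents this as a single chain of equalities from left to right (substituting $c=b-a$, swapping sums, then recognizing $\nu^{c+\BFone}f$), whereas you compare coefficients of $\BFX^c$ on each side separately, but the underlying manipulation is identical.
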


\begin{proof} 
\begin{eqnarray*}
\beta_0(f,s) 
& = & \sum_{\hbox{\bf \scriptsize 1}\leq a\leq \delta f}
\left(
f\Gamma (s)
\right)_a \BFX^a \\
& = & \sum_{\hbox{\bf \scriptsize 1}\leq a\leq \delta f}
\left(
\sum_{a\leq b\leq \delta f} f_bs_{a-b}
\right) 
\BFX^a\\
& = & \sum_{\hbox{\bf \scriptsize 1}\leq a\leq \delta f}
\left( 
\sum_{\hbox{\bf \scriptsize 0}\leq c\leq \delta f-a}
f_{a+c}s_{-c} \BFX^a 
\right) \\
& = & \sum_{\hbox{\bf \scriptsize 0}\leq c\leq \delta f-\hbox{\bf \scriptsize 1}}
s_{-c}
\left( 
\sum_{\hbox{\bf \scriptsize 1}\leq a\leq \delta f-c}f_{a+c} \BFX^a
\right)\\
& = & \sum_{\hbox{\bf \scriptsize 0}\leq c\leq \delta f-\hbox{\bf \scriptsize 1}}
s_{-c}
\left(
\sum_{c+\hbox{\bf \scriptsize 1}\leq d\leq \delta f}f_d
\BFX^{d-c}
\right)\\
& = & \BFX \sum_{\hbox{\bf \scriptsize 0}\leq c\leq \delta f-\hbox{\bf \scriptsize 1}} 
s_{-c}\left(
\sum_{c+\hbox{\bf \scriptsize 1}\leq d\leq \delta f}f_d \BFX^{d-(c+\hbox{\bf 
\scriptsize 1})}
\right)\\
& = & \BFX \sum_{\hbox{\bf \scriptsize 0}\leq c\leq \delta f-\hbox{\bf 
\scriptsize 1}} s_{-c} \ 
\nu ^{c+\hbox{\bf \scriptsize 1}} f.\\
\end{eqnarray*}
\end{proof}

Setting $n=1$, we obtain the following result ({\em cf.} Ferrand (1988),
Section 1.3):

\begin{corollary} If $n=1$ and $\delta f \geq 1$, then 
$$f\Gamma(s) = X \sum_{a=0}^{\delta f-1} s_{-a} \ \nu ^{a+1} f \ 
+ \Gamma(f \circ s).$$
\end{corollary}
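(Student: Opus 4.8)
The plan is to deduce this corollary directly from the two preceding results, namely Proposition~\ref{simpledecomp} and Lemma~\ref{shortproof}, by specializing to $n=1$. First I would observe that when $n=1$, the border partition of $(-\infty,\delta f]$ has exactly $2^1=2$ members, $\beta_0=[1,\delta f]$ and $\beta_1=(-\infty,0]$, so that Proposition~\ref{simpledecomp} reads $f\Gamma(s)=\beta_0(f,s)+\beta_1(f,s)$ with $\beta_1(f,s)=\Gamma(f\circ s)$ by the identification $\beta_{2^n-1}(f,s)=\Gamma(f\circ s)$. This already gives the shape $f\Gamma(s)=\beta_0(f,s)+\Gamma(f\circ s)$; it remains only to rewrite $\beta_0(f,s)$.

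Next I would apply Lemma~\ref{shortproof} with $n=1$. Since $\delta f\geq 1$ (hypothesis) the lemma yields
$$\beta_0(f,s)=X\sum_{0\leq a\leq \delta f-1}s_{-a}\,\nu^{a+1}f,$$
where the multi-index sum collapses to an ordinary sum over $a=0,1,\ldots,\delta f-1$ because $\BFone=1$ and $\BFX=X$ in one variable, and $\nu^{a+1}$ is precisely Newton's classical divided-difference operator (as noted in the paragraph preceding the definition of $\nu^a$ and again after the example). Substituting this expression for $\beta_0(f,s)$ into $f\Gamma(s)=\beta_0(f,s)+\Gamma(f\circ s)$ gives exactly the claimed identity
$$f\Gamma(s)=X\sum_{a=0}^{\delta f-1}s_{-a}\,\nu^{a+1}f+\Gamma(f\circ s).$$

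There is essentially no obstacle here: the corollary is a transcription of Lemma~\ref{shortproof} and Proposition~\ref{simpledecomp} into one-variable notation, and the only thing to check is the bookkeeping that the $n=1$ border partition has the stated two blocks and that the vector inequalities and monomials degenerate to their scalar counterparts. If I wanted to make the write-up fully self-contained I could instead bypass Proposition~\ref{simpledecomp} and argue directly: $\mathrm{Supp}(f\Gamma(s))\subseteq(-\infty,\delta f]$, and splitting this support at $0$ gives $f\Gamma(s)=(f\Gamma(s))|[1,\delta f]+(f\Gamma(s))|(-\infty,0]$, whose second summand is $\Gamma(f\circ s)$ by Proposition~\ref{porism} and whose first summand is $\beta_0(f,s)$ by definition; then Lemma~\ref{shortproof} finishes it. Either route is short, so the write-up should simply cite Proposition~\ref{simpledecomp} and Lemma~\ref{shortproof} and note the degeneration of notation.
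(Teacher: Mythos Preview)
Your proposal is correct and matches the paper's approach exactly: the paper simply states the corollary as what one obtains by setting $n=1$ in the preceding results (Proposition~\ref{simpledecomp} and Lemma~\ref{shortproof}), which is precisely the specialization you carry out. Your bookkeeping on the two-block border partition and the degeneration of the multi-index notation is accurate and makes the one-line deduction explicit.
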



\subsection{The Border Laurent Series}

We have determined the summands $\beta_k(f,s)$ of $f\cdot\Gamma(s)$ in 
Proposition~\ref{simpledecomp} for $k=0,\ 2^n-1.$ 
In this subsection, we determine the remaining summands (when $n \geq 2$).

If $0 < k < 2^n-1,$ then
for some $i,j , \ 1 \leq i,j \leq n,\ b_i(k) =0 $ and $b_j(k) = 1$, so
that for these $i,j, \ \beta_k(f,s)$
involves polynomials in $X_i$ and power series in $X_j^{-1}$. 
For this reason, we call $\{\beta_k(f,s) \ : \ 0 < k < 2^n-1 \}$ 
the {\em border Laurent series of $f$ and $s$}.

In order to discuss these series, we need to extend some of our earlier 
definitions:

$\BFi$ denotes a subset of  $\BFn = \{1,2,\ldots ,n \}$ with cardinality
$|\BFi|$ and 
$\BFi ' = \BFn\setminus \BFi;\
\BFi \ll \BFn$ means that $\BFi$ is a {\em proper} subset of $\BFn.$ 
If $\BFi \ll \BFn$ and $a \in {\Bbb N}^{\hbox{\bf \scriptsize i}}, \
X_{\hbox{\bf \scriptsize i}}^a$ has the obvious meaning 
and for $1 \leq i \leq n$, we write $X_i$ for $X_{\{i \}}$. 
We let $\pi_{\hbox{\bf \scriptsize i}}$ be the projection of ${\Bbb Z}^n$ 
onto ${\Bbb Z}^{\hbox{\bf \scriptsize i}}$. 
If $a\in {\Bbb Z}^{\hbox{\bf \scriptsize i}}$ and 
$a' \in {\Bbb Z}^{\hbox{\bf \scriptsize i}'}$, 
then $(a,a')$ is the unique element of 
${\Bbb Z}^n$ with $\pi_{\hbox{\bf \scriptsize i}}(a,a') = a$ 
and $\pi_{\hbox{\bf \scriptsize i}'}(a,a') = a'$. Lastly, 
for $\delta f \geq \BFone,\ 
\delta_{\hbox {\bf \scriptsize i}}f = \pi_{\sBFi} \delta f.$


\begin{definition} Let $n \geq 2$ and $\hbox{\bf i} \ll \hbox{\bf n}$. 

(a) For $a \in -{\Bbb N}^{\hbox{\bf \scriptsize i}}$, the $a$--{\em section} 
of $s$ is $s^{(a)} \in S^{|\hbox{\bf \scriptsize i}'|}(R)$ given by 
$$s^{(a)}_{\ a'} = s_{(a,a')}$$
for $a' \in -{\Bbb N}^{\hbox{\bf \scriptsize i}'}$.

(b) For $\hbox{\bf 0} \leq a \leq \delta _{\hbox{\bf \scriptsize i}}f$, the
$a$--{\em section} of $f$ is $f^{(a)}\in 
R[X_{\hbox{\bf \scriptsize i}'}]$ given by
$$f^{(a)}(X_{\hbox{\bf \scriptsize i}'}) = 
\sum_{\hbox{\bf \scriptsize 0}'\leq a'\leq \delta_{\hbox{\bf \scriptsize i}'}f}
f_{(a,a')} X_{\hbox{\bf \scriptsize i}'}^{a'}.$$
\end{definition}

Clearly $\delta f^{(a)} \leq \delta_{\hbox{\bf \scriptsize i}'}f$ and
for $\BFZ' \leq a' \leq \delta (f^{(a)}),\ f^{(a)}_{\ a'} = f_{(a,a')}.$


We will simplify the main 
result of this subsection by using the following technical definition:

\begin{definition} Let $n \geq 2$ and $\delta f \geq \hbox{\bf 1}.$ 
For $\BFi \ll \BFn$, let $\delta = \delta_{\sBFi}f$ and 
$\delta' = \delta_{\sBFi'}f.$
We will write the sum of ``cross--products of sections'' 
$$X_{\hbox{\bf \scriptsize i}'} 
\sum_{\hbox{\bf \scriptsize 0}\leq a\leq \delta}\ 
\sum_{\hbox{\bf \scriptsize 0}'\leq a'\leq \delta'-\hbox{\bf \scriptsize 1}'} 
\left( \nu^{a'+\hbox{\bf \scriptsize 1}'} f^{(a)}\right) 
(X_{\hbox{\bf \scriptsize i}'})  
\cdot
\G\left(X_{\hbox{\bf \scriptsize i}}^a\circ s^{(-a')}\right)
(X_{\hbox{\bf \scriptsize i}}^{-\hbox{\bf \scriptsize 1}})$$
as $\beta^{\times}_{\sBFi}(f,s)
(X_{\hbox{\bf \scriptsize i}}^{-\hbox{\bf \scriptsize 1}},
X_{\hbox{\bf \scriptsize i}'})$
for short.

\end{definition}

Notice that 
$\beta^{\times}_{\sBFi}(f,s)
(X_{\hbox{\bf \scriptsize i}}^{-\hbox{\bf \scriptsize 1}},
X_{\hbox{\bf\scriptsize i}'}) \in L_n$. 
In fact it belongs to $\left( R[X_{\hbox{\bf \scriptsize i}'}]\right)
[[X_{\hbox{\bf \scriptsize i}}^{-\hbox{\bf \scriptsize 1}}]]$.
We will also abbreviate 
$\beta^{\times}_{\sBFi}(f,s)
(X_{\hbox{\bf \scriptsize i}}^{-\hbox{\bf \scriptsize 1}},
X_{\hbox{\bf\scriptsize i}'})$
to 
$\beta^{\times}_{\sBFi}(f,s).$

For $n = 2$ and continuing the notation of the definition, 
$\BFi = \{1\} \mbox{ or } \ \{2\}$, and
$$\beta^{\times}_{\{1\}}(f,s)(X_1^{-1} , X_2) = X_2 
\sum_{a=0}^{\delta}
\sum_{a'=0}^{\delta'-1}
\left(\nu^{a'+1} f^{(a)}\right)(X_2) 
\cdot
\G\left(X_1^a\circ s^{(-a')}\right)(X_1^{-1})$$
$$\beta^{\times}_{\{2\}}(f,s)(X_2^{-1}, X_1)
= X_1 \sum_{a=0}^{\delta}
\sum_{a'=0}^{\delta'-1} 
\left( \nu^{a'+1} f^{(a)}\right)(X_1)
\cdot
\G\left(X_2^a\circ s^{(-a')}\right)(X_2^{-1})$$
where $\nu$ denotes the usual divided difference operator.

For $0 < k < 2^n-1$, we define $\BFi(k) = \{ i \in \BFn \ : \ b_i(k) = 1 \}
\ll \BFn$.
Thus if $\delta f \geq \hbox{\bf 1},\ \BFi(k)$ is the set of $i$ for which 
$\pi_i\beta_k(-\infty,\delta f]$ is infinite.

\begin{theorem} \label{borderseries} Let $n \geq 2, \ 0 < k < 2^n-1$, and let
$\BFi = \BFi(k)$. Then for $\delta f \geq \hbox{\bf 1}$,
the $k^{th}$ border Laurent series of $f$ and $s$ is 
$$\beta_k(f,s) = 
\beta^{\times}_{\sBFi}(X_{\hbox{\bf \scriptsize i}}^{\hbox{\bf \scriptsize -1}},
X_{\hbox{\bf \scriptsize i}'}).$$
\end{theorem}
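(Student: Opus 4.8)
The plan is to compute $\beta_k(f,s) = (f\cdot\Gamma(s))\,|\,\beta_k$ directly, exploiting the product structure $\beta_k = \prod_{i\in\BFi}(-\infty,0]\times\prod_{j\in\BFi'}[1,\delta_jf]$, and to reduce the computation to the already-established case treated in Lemma~\ref{shortproof}. First I would write every exponent $c\in{\Bbb Z}^n$ occurring in $\Gamma(s)$ as a pair $(b,b')$ with $b\in-{\Bbb N}^{\sBFi}$ and $b'\in-{\Bbb N}^{\sBFi'}$, and similarly split the monomials $\BFX^a$ in $f$ as $X_{\sBFi}^a X_{\sBFi'}^{a'}$. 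Then $\Gamma(s) = \sum_{b\leq\BFZ,\,b'\leq\BFZ'} s_{(b,b')}X_{\sBFi}^b X_{\sBFi'}^{b'}$, and regrouping by the $\BFi'$-exponent we get $\Gamma(s) = \sum_{b'\leq\BFZ'} \G(s^{(b')})(X_{\sBFi}^{-\sBFone})\,X_{\sBFi'}^{b'}$, where $s^{(b')}$ is the section from the earlier definition (here viewed as a sequence in the $\BFi$-variables). This exhibits $\Gamma(s)$ as an element of $\bigl(R[[X_{\sBFi}^{-\sBFone}]]\bigr)[[X_{\sBFi'}^{-\sBFone}]]$ and lets me treat the $\BFi$-variables and the $\BFi'$-variables separately.

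Next I would take the product with $f = \sum_{\BFZ'\leq a'\leq\delta_{\sBFi'}f} f^{(a')}(X_{\sBFi})\,X_{\sBFi'}^{a'}$ — wait, more carefully, $f = \sum_{a\leq\delta_{\sBFi}f} f^{(a)}(X_{\sBFi'})\,X_{\sBFi}^{a}$, using the $a$--section of $f$ in the $\BFi$-variable — and then restrict the product to $\beta_k$. The restriction to $\prod_{i\in\BFi}(-\infty,0]$ in the $\BFi$-coordinates is exactly the operation ``$|(-\infty,\BFZ]$'' from Proposition~\ref{porism}, applied variable-group-wise; by that proposition this picks out $\G(\text{shift of the section})$. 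The restriction to $\prod_{j\in\BFi'}[1,\delta_jf]$ in the $\BFi'$-coordinates is precisely the ``border polynomial'' operation of Lemma~\ref{shortproof}, but now applied in the $\BFi'$-variables to the polynomial $f^{(a)}$ and the relevant section of $s$. So the heart of the argument is: $\beta_k(f,s)$ factors, slot by slot, into a $\G$-of-a-section in the $\BFi$-variables (contributing the $\G(X_{\sBFi}^a\circ s^{(-a')})(X_{\sBFi}^{-\sBFone})$ factor, with the shift $X_{\sBFi}^a$ coming from multiplying by the $X_{\sBFi}^a$-part of $f$ and then truncating à la Proposition~\ref{porism}) times a $\nu^{a'+\sBFone'}f^{(a)}$ factor in the $\BFi'$-variables (coming from Lemma~\ref{shortproof} applied in those variables), summed over $\BFZ\leq a\leq\delta_{\sBFi}f$ and $\BFZ'\leq a'\leq\delta_{\sBFi'}f-\sBFone'$, and with an overall factor $X_{\sBFi'}$ from the border shift. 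That is exactly $\beta^{\times}_{\sBFi}(f,s)$, so the two expressions agree.

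Concretely, I would run the same chain of equalities as in the proof of Lemma~\ref{shortproof}, but carrying the $\BFi$-variables along as ``inert coefficients''. Start from $\beta_k(f,s) = \sum_{\BFone'\leq a'\leq\delta_{\sBFi'}f}\bigl((f\Gamma(s))_{(\cdot,a')}\bigr)X_{\sBFi'}^{a'}$ where the inner object is a Laurent series in $X_{\sBFi}^{-\sBFone}$; expand $(f\Gamma(s))$ using the convolution, split indices into $\BFi$ and $\BFi'$ parts, reverse the order of summation exactly as in Lemma~\ref{shortproof} to trade $a',b'$ for $a', c'=b'-a'$ running over $\BFZ'\leq c'\leq\delta_{\sBFi'}f-\sBFone'$, pull out $X_{\sBFi'}$, and recognise the $\BFi'$-sum as $\nu^{c'+\sBFone'}$ applied to the $\BFi$-coefficient sequence of $f$ — which by definition is $f^{(a)}$ once the $\BFi$-part is fixed. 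What remains in the $\BFi$-variables is $\sum_{b\leq\BFZ}\bigl(\sum_{\BFZ\leq a\leq\delta_{\sBFi}f} f_{(a,\,c'+\sBFone')\text{-ish}}\,s_{(b-a,\,-c')}\bigr)X_{\sBFi}^b$, which is $\G(X_{\sBFi}^a\circ s^{(-c')})(X_{\sBFi}^{-\sBFone})$ after recognising the convolution-then-truncate as the section-shift via Proposition~\ref{porism}. Renaming $c'$ to $a'$ gives the claimed formula.

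The main obstacle, and where I would spend the most care, is bookkeeping: getting the index ranges and the shift bookkeeping for the $\BFi$-factor exactly right — in particular verifying that the truncation to $(-\infty,\BFZ]$ in the $\BFi$-coordinates, composed with multiplication by $X_{\sBFi}^a$, yields precisely $\G(X_{\sBFi}^a\circ s^{(-a')})$ and not some off-by-one variant, and that the ranges $\BFZ\leq a\leq\delta_{\sBFi}f$ and $\BFZ'\leq a'\leq\delta_{\sBFi'}f-\sBFone'$ emerge correctly (the upper limit $\delta_{\sBFi}f$ rather than $\delta_{\sBFi}f-\sBFone$ is because in the $\BFi$-directions we keep \emph{all} of $(-\infty,0]$, not the strict ``border'' part). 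It is also worth noting explicitly that $\beta_k(f,s)\in\bigl(R[X_{\sBFi'}]\bigr)[[X_{\sBFi}^{-\sBFone}]]$ so that all manipulations take place in a genuine ring and the rearrangements of (possibly infinite) sums are legitimate — the $\BFi'$-sums are finite, so no convergence subtlety arises beyond the standard one already used for $S_n$.
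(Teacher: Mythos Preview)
Your proposal is correct and follows essentially the same route as the paper: a direct computation of $(f\Gamma(s))|\beta_k$ by expanding the convolution, splitting every index into its $\BFi$- and $\BFi'$-components, swapping the order of summation, and then recognising the $\BFi'$-sum as $\nu^{a'+\sBFone'}f^{(a)}$ and the $\BFi$-sum as $\Gamma(X_{\sBFi}^a\circ s^{(-a')})$. The paper writes this out as a single chain of equalities without explicitly invoking Lemma~\ref{shortproof} or Proposition~\ref{porism}, whereas you (helpfully) name those results as the templates for the $\BFi'$- and $\BFi$-parts respectively; but the underlying index manipulations are identical.
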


\begin{proof} 
\begin{eqnarray*}
\beta_k(f,s)
& = & 
\sum_{(a,a')\in\beta_k(-\infty,\delta f]\cap\mbox{\scriptsize Supp}(f\Gamma)}
\left(f\Gamma\right)_{(a,a')} \BFX^{(a,a')}\\
& = & \sum_{a \leq \hbox{\bf \scriptsize 0}, \
\hbox{\bf \scriptsize 1}' \leq a' \leq \delta'}
\left( \sum_{(a,a')\leq b\leq (\delta,\delta')} 
f_b\ s_{(a,a')-b}\right) \BFX^{(a,a')}\\
& = & \sum_{a \leq \hbox{\bf \scriptsize 0},  \
\hbox{\bf \scriptsize 1}' \leq a' \leq \delta'}
\left( \sum_
{(\hbox{\bf \scriptsize 0}, \hbox{\bf \scriptsize 0}')
\leq (c,c')\leq (\delta,\delta')-(a,a')} 
f_{(a,a')+(c,c')}\ s_{-(c,c')}\right) \BFX^{(a,a')}\\
& = & \sum_{a \leq \hbox{\bf \scriptsize 0},   \
\hbox{\bf \scriptsize 0}' \leq c' \leq \delta' - \hbox{\bf \scriptsize 1}' }
\left( \sum_{\hbox{\bf \scriptsize 0}\leq c \leq \delta-a,\
\hbox{\bf \scriptsize 1}' \leq a' \leq \delta'} 
f_{(a,a')+(c,c')}\ s_{-(c,c')}\right) X_{\hbox{\bf \scriptsize i}}^a 
X_{\hbox{\bf \scriptsize i}'}^{a'}\\
& = & \sum_{d \leq \hbox{\bf \scriptsize 0}, \
\hbox{\bf \scriptsize 0}' \leq c' \leq \delta' - \hbox{\bf \scriptsize 1}' }
\left( \sum_{\hbox{\bf \scriptsize 0}\leq c \leq \delta-d,\
c' + \hbox{\bf \scriptsize 1}' \leq d' \leq \delta'} 
f_{(c+d,d')}\ s_{-(c,c')}\right) X_{\hbox{\bf \scriptsize i}}^d 
X_{\hbox{\bf \scriptsize i}'}^{d'-c'}\\
& = & X_{\hbox{\bf \scriptsize i}'}\sum_{d \leq \hbox{\bf \scriptsize 0}, \
\hbox{\bf \scriptsize 0}' \leq e' \leq \delta' - \hbox{\bf \scriptsize 1}' }
\left( \sum_{d\leq e \leq \delta,\
e' + \hbox{\bf \scriptsize 1}' \leq d' \leq \delta'} 
f_{(e,d')} X_{\hbox{\bf \scriptsize i}'}^{d'}\ s_{(d-e,e')}\right)
X_{\hbox{\bf \scriptsize i}}^d\\
& = & X_{\hbox{\bf \scriptsize i}'}
\sum_{ \hbox{\bf \scriptsize 0}\leq e \leq \delta, \
\hbox{\bf \scriptsize 0}' \leq e' \leq \delta' - \hbox{\bf \scriptsize 1}' }
\left(
\sum_{d\leq \hbox{{\bf \scriptsize 0}},\
e' + \hbox{\bf \scriptsize 1}' \leq d' \leq \delta'} 
f_{(e,d')} X_{\hbox{\bf \scriptsize i}'}^{d'-(e'+\hbox{\bf \scriptsize 1}')} 
\ s_{(d-e,e')}
X_{\hbox{\bf \scriptsize i}}^{d}
\right)\\
& = & X_{\hbox{\bf \scriptsize i}'}
\sum_{ \hbox{\bf \scriptsize 0}\leq e \leq \delta, \
\hbox{\bf \scriptsize 0}' \leq e' \leq \delta' - \hbox{\bf \scriptsize 1}' }
\left(
\sum_{
e' + \hbox{\bf \scriptsize 1}' \leq d' \leq \delta'} 
f^{(e)}_{d'} X_{\hbox{\bf \scriptsize i}'}^{d'-(e'+\hbox{\bf \scriptsize 1}')}
\right)
\cdot
\left(
\sum_{d \leq \hbox{\bf \scriptsize 0}}
\left( X_{\hbox{\bf \scriptsize i}}^e \circ s^{(-e')}\right)_d
X_{\hbox{\bf \scriptsize i}}^d
\right)\\
& = & X_{\hbox{\bf \scriptsize i}'}
\sum_{ {\hbox{\bf \scriptsize 0}} \leq e \leq \delta, \
\hbox{\bf \scriptsize 0}' \leq e' \leq \delta' - \hbox{\bf \scriptsize 1}' }
\left(\nu^{e' + \hbox{\bf \scriptsize 1}'} f^{(e)}\right) 
(X_{\hbox{\bf \scriptsize i}'})
\cdot
\Gamma \left(X_{\hbox{\bf \scriptsize i}}^e \circ s^{(-e')}\right)
(X_{\hbox{\bf \scriptsize i}}^{\hbox{\bf \scriptsize -1}})\\
& = & \beta^{\times}_{\sBFi}
(X_{\hbox{\bf \scriptsize i}}^{\hbox{\bf \scriptsize -1}},
X_{\hbox{\bf \scriptsize i}'}).\\
\end{eqnarray*}

\end{proof}

Let $\delta f \geq \BFone.$
To simplify the notation, we let 
$$\beta^\times_{\hbox{\bf \scriptsize i}(0)}(f,s) =
\BFX 
\sum_{\hbox{\bf \scriptsize 0}\leq a\leq \delta f-\hbox{\bf \scriptsize 1}} 
\ s_{-a} \ \nu ^{a+\hbox{\bf \scriptsize 1}} f,$$
which is just $\beta_0(f,s)$ by Lemma~\ref{shortproof}.

\begin{corollary} Let $\delta f \geq \hbox{\bf 1}.$

(a) 
$$f 
\cdot \Gamma(s) 
= \sum_{k=0}^{2^n-2}
\beta^{\times}_{\sBFi(k)}(f,s)
+
\Gamma(f\circ s)$$
where Supp$(\beta^{\times}_{\sBFi(k)}(f,s)) \subseteq \beta_k(-\infty,\delta f]$
for $0 \leq k \leq 2^n-2.$

(b) $f \in Ann(s)$ if, and only if
$$\Gamma(s) 
= \left( \sum_{k=0}^{2^n-2} \beta^{\times}_{\sBFi(k)}(f,s) \right) / f.$$

(c) If $f$ is monic, $G \in L_n$ 
and $\mbox{Supp}(G) \subseteq (-\BFone,\delta f-\BFone]$, then 
$\BFX G/f \in S_n$ and
$s$ defined by $\Gamma (s) = \BFX G/f$ is an lrs with $f \in \mbox{Ann}(s)$,
and $\BFX G = \sum_{k=0}^{2^n-2}
\beta^{\times}_{\sBFi(k)}(f,s).$
\end{corollary}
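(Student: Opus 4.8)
The plan is to assemble this corollary directly from results already established, essentially by amalgamation. Part~(a) is immediate: Proposition~\ref{simpledecomp} gives $f\cdot\Gamma(s)=\sum_{k=0}^{2^n-1}\beta_k(f,s)$ with $\mathrm{Supp}(\beta_k(f,s))\subseteq\beta_k(-\infty,\delta f]$ and $\beta_{2^n-1}(f,s)=\Gamma(f\circ s)$; it then remains only to rewrite the terms of index $0\leq k\leq 2^n-2$. For $k=0$ this is Lemma~\ref{shortproof} together with the definition of $\beta^{\times}_{\sBFi(0)}(f,s)$ recorded just above the corollary, and for $0<k<2^n-1$ it is exactly Theorem~\ref{borderseries}; the support bounds carry over unchanged from Proposition~\ref{simpledecomp}. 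I would add a remark on the degenerate case $n=1$, in which the sum $\sum_{k=0}^{2^n-2}$ contains only the single term $\beta^{\times}_{\sBFi(0)}(f,s)=\beta_0(f,s)$, so that part~(a) recovers the $n=1$ corollary following Lemma~\ref{shortproof}.

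For part~(b) I would note that $f\in\mathrm{Ann}(s)$ iff $f\circ s=0$ iff $\Gamma(f\circ s)=0$ (since $\Gamma(t)=0$ forces $t=0$). By part~(a), $\Gamma(f\circ s)=f\cdot\Gamma(s)-\sum_{k=0}^{2^n-2}\beta^{\times}_{\sBFi(k)}(f,s)$, so this vanishes exactly when $f\cdot\Gamma(s)=\sum_{k=0}^{2^n-2}\beta^{\times}_{\sBFi(k)}(f,s)$, which is the displayed identity $\Gamma(s)=\bigl(\sum_{k=0}^{2^n-2}\beta^{\times}_{\sBFi(k)}(f,s)\bigr)/f$. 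Here $(\cdot)/f$ is to be read, exactly as in Corollary~\ref{ndlrs}, as the solution of the corresponding linear equation in $L_n$ --- a solution that exists, being $\Gamma(s)$ itself --- rather than as requiring $f$ to be invertible.

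For part~(c) I would quote the converse half of Corollary~\ref{ndlrs}: the hypotheses ($f$ monic, $\mathrm{Supp}(G)\subseteq(-\BFone,\delta f-\BFone]$) already give $\BFX G/f\in S_n$ and that $s$ with $\Gamma(s)=\BFX G/f$ is an lrs with $f\in\mathrm{Ann}(s)$. Since $f\in\mathrm{Ann}(s)$, part~(b) gives $f\cdot\Gamma(s)=\sum_{k=0}^{2^n-2}\beta^{\times}_{\sBFi(k)}(f,s)$; on the other hand $f\cdot\Gamma(s)=f\cdot(\BFX G/f)=\BFX G$, this last equality being exact because $\BFX G/f=H/u$ with $H\in S_n$ and $u=\BFX^{-\delta f}f$ a unit in $S_n$ (from the proof of Corollary~\ref{ndlrs}), whence $f\cdot(H/u)=\BFX^{\delta f}H=\BFX G$. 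Comparing the two expressions gives $\BFX G=\sum_{k=0}^{2^n-2}\beta^{\times}_{\sBFi(k)}(f,s)$. There is no genuine obstacle here --- the corollary is bookkeeping over Proposition~\ref{simpledecomp}, Lemma~\ref{shortproof}, Theorem~\ref{borderseries} and Corollary~\ref{ndlrs} --- and the only step I would take care over is precisely the interpretation of division by $f$ in (b) and (c), since $L_n$ need not be a domain and literal cancellation of $f$ is licensed only when $f$ is a non-zero-divisor (e.g.\ $f$ monic).
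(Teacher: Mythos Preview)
Your proposal is correct and follows essentially the same route as the paper, which records the corollary as an immediate consequence of Proposition~\ref{porism} (via Proposition~\ref{simpledecomp}), Corollary~\ref{ndlrs}, Lemma~\ref{shortproof} and Theorem~\ref{borderseries}. Your additional care about the meaning of division by $f$ in (b) and (c) is more explicit than the paper but not a different argument.
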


\begin{proof}
These are immediate consequences of Proposition~\ref{porism}, 
Corollary~\ref{ndlrs}, Lemma~\ref{shortproof} and Theorem~\ref{borderseries}.
\end{proof}

We now state an analogue of the previous corollary for sequences ${\Bbb N}^n
\rightarrow R$ and left--shifting. In this case, 
$\prod_{i=1}^n\{[-\delta_i f,-1], [0, \infty)\}$
is to be used as the ``border partition'' $\{ \beta_k[-\delta f,\infty)
: 0 \leq k \leq 2^n-1\}$ of $[-\delta f, \infty).$

\begin{corollary} \label{leftdecomp}
Let $\delta f \geq \hbox{\bf 1}$. If
$s :{\Bbb N}^n \rightarrow R$ (so that $\Gamma(s) \in R[[\BFX]]$), then

(a) 
$$f^\ast \G(s) =
\sum_{k=0}^{2^n-2}  
\beta^\star_{\hbox{\bf \scriptsize i}(k)}(f,s) + 
\BFX^{\delta f}\Gamma(f \circ s)$$ 
where 
$$\beta^\star_{\hbox{\bf \scriptsize i}(0)}(f,s) = 
\BFX^{\delta f}\ 
\beta^\times_{\hbox{\bf \scriptsize i}(0)}(f,s)
(\BFX^{-\hbox{\bf \scriptsize 1}}) \mbox{ and }
\beta^\star_{\hbox{\bf \scriptsize i}(k)}(f,s) = \BFX^{\delta f}
\beta^\times_{\hbox{\bf \scriptsize i}(k)}(f,s)
(X_{\hbox{\bf \scriptsize i}},
X_{\hbox{\bf \scriptsize i}'}^{-\hbox{\bf \scriptsize 1}})$$ for 
$0 \leq k \leq 2^n-2$ and
Supp$(\beta^\star_{\hbox{\bf \scriptsize i}(k)}) \subseteq
\{a+\delta f \ : \ a \in \beta_k[-\delta f, \infty)\}$ 
for $0 \leq k \leq 2^n-2.$

(b) If $f \in 
\mbox{Ann}(s)$, then 
$$\G(s) =
\left( 
\sum_{k=0}^{2^n-2}  
\beta_{\hbox{\bf \scriptsize i}(k)}^\star(f,s)
\right) / f^\ast$$ 
where $\beta_{\hbox{\bf \scriptsize i}(0)}^\star(f,s) \in P_n.$ In fact, 
$\delta \beta_{\hbox{\bf \scriptsize i}(0)}^\star(f,s) 
\leq \delta f - \BFone$ and
$\beta_{\hbox{\bf \scriptsize i}(0)}^\star(f,s)=
\BFX^{\delta f - \delta\beta_0(f,s)}\beta_0(f,s)^\ast.$

(c) If $f$ is monic and $G \in 
R[[\BFX]]$ satisfies $\mbox{Supp}(G) \subseteq [\hbox{\bf 0},\delta f)$, 
then $s$ defined by $G/f^\ast$ is an lrs with $f \in \mbox{Ann}(s)$ and 
$$G = \sum_{k=0}^{2^n-2} \beta^\star_{\hbox{\bf \scriptsize i}(k)}(f,s).$$

\end{corollary}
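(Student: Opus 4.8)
The plan is to derive Corollary~\ref{leftdecomp} from Corollary~\ref{leftdecomp}'s left-shifting counterpart by the ``reciprocal'' translation, exactly as Lemma~\ref{leftshift} and Corollary~\ref{leftgamma} were obtained from Propositions~\ref{porism} and~\ref{charpoly}. First I would record the key bookkeeping fact: if $s:{\Bbb N}^n\to R$ and $s^-\in S^n(R)^-$ is its reflection (Proposition~\ref{minus}), then $\G(s)(\BFX)=\G(s^-)(\BFX^{-\hbox{\bf \scriptsize 1}})$ after relabelling, and more usefully $\BFX^{\delta f}\cdot\bigl(f\cdot\G(s^-)\bigr)(\BFX^{-\hbox{\bf\scriptsize 1}})$ is precisely $f^\ast\cdot\G(s)$ up to the support shift $a\mapsto a+\delta f$. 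Concretely: the multiplication-by-$\BFX^{\delta f}$-and-invert-variables operator carries the Laurent series $f\cdot\Gamma(s^-)$ supported in $(-\infty,\delta f]$ to a power series supported in $[\hbox{\bf 0},\infty)$, carries $f(\BFX)$ to $f^\ast(\BFX)$, carries $\Gamma(s^-)$ to $\Gamma(s)$, and carries the border partition $\beta(-\infty,\delta f]$ to $\{a+\delta f : a\in\beta_k[-\delta f,\infty)\}$ — which is exactly the translated version of $\prod_{i=1}^n\{[-\delta_if,-1],[0,\infty)\}$ announced before the statement.

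Granting that dictionary, part (a) is immediate from the already-proved decomposition $f\cdot\Gamma(s^-)=\sum_{k=0}^{2^n-2}\beta^\times_{\sBFi(k)}(f,s^-)+\Gamma(f\circ s^-)$ (the Corollary following Theorem~\ref{borderseries}): apply $\BFX^{\delta f}\cdot(-)(\BFX^{-\hbox{\bf\scriptsize 1}})$ termwise. The summand $\Gamma(f\circ s^-)$ becomes $\BFX^{\delta f}\Gamma(f\circ s)$ since $(f\circ s)^-=f\circ s^-$ by Proposition~\ref{minus}. The summand $\beta^\times_{\sBFi(0)}(f,s^-)$ becomes $\beta^\star_{\sBFi(0)}(f,s):=\BFX^{\delta f}\beta^\times_{\sBFi(0)}(f,s)(\BFX^{-\hbox{\bf\scriptsize 1}})$ — note $s$ and $s^-$ agree on the finitely many indices $-a$, $\hbox{\bf 0}\le a\le\delta f-\hbox{\bf 1}$, that enter the formula for $\beta_0$, so writing $\beta^\times_{\sBFi(0)}(f,s)$ is legitimate. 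For $0<k<2^n-1$ the mixed summand $\beta^\times_{\sBFi(k)}$ involves polynomials in $X_{\sBFi'}$ and power series in $X_{\sBFi}^{-1}$; inverting all variables and multiplying by $\BFX^{\delta f}$ turns this into polynomials in $X_{\sBFi}$ and power series in $X_{\sBFi'}^{-1}$, which is the claimed $\beta^\star_{\sBFi(k)}(f,s)=\BFX^{\delta f}\beta^\times_{\sBFi(k)}(f,s)(X_{\sBFi},X_{\sBFi'}^{-\hbox{\bf\scriptsize 1}})$. The support claim follows because Proposition~\ref{simpledecomp} gives $\mathrm{Supp}(\beta^\times_{\sBFi(k)}(f,s^-))\subseteq\beta_k(-\infty,\delta f]$ and the operator shifts supports by $+\delta f$.

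For part (b): if $f\in\mathrm{Ann}(s)$ then $f\in\mathrm{Ann}(s^-)$ (the isomorphism induced by $^-$), so $f\circ s^-=0$, the last summand in (a) vanishes, and dividing by $f^\ast$ gives the stated formula for $\G(s)$ — here one should note $f^\ast$ is a unit in the relevant power-series ring when $f(\hbox{\bf 0})\ne0$, or more carefully just observe that $f^\ast\cdot\G(s)$ equals the stated numerator as a genuine power-series identity so the quotient is meaningful. That $\beta^\star_{\sBFi(0)}(f,s)\in P_n$ with $\delta\beta^\star_{\sBFi(0)}(f,s)\le\delta f-\hbox{\bf 1}$ is because $\beta_0(f,s)=\beta^\times_{\sBFi(0)}(f,s)$ lies in $\BFX P_n$ with $\mathrm{Supp}\subseteq[\hbox{\bf 1},\delta f]$ (Lemma~\ref{shortproof} and the definition of the border polynomial), so $\BFX^{\delta f}\beta_0(f,s)(\BFX^{-\hbox{\bf\scriptsize 1}})$ has support in $[\hbox{\bf 0},\delta f-\hbox{\bf 1}]$; comparing with the reciprocal, $\BFX^{\delta f}\beta_0(f,s)(\BFX^{-\hbox{\bf\scriptsize 1}})=\BFX^{\delta f-\delta\beta_0(f,s)}\beta_0(f,s)^\ast$ follows from the definition $g^\ast=\BFX^{\delta g}g(\BFX^{-\hbox{\bf\scriptsize 1}})$ applied to $g=\beta_0(f,s)$. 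Part (c) is the converse: given monic $f$ and $G\in R[[\BFX]]$ with $\mathrm{Supp}(G)\subseteq[\hbox{\bf 0},\delta f)$, define $s$ by $\G(s)=G/f^\ast$; then $f^\ast\in\mathrm{Ann}(s)$ — equivalently, by the reciprocal dictionary and Corollary~\ref{ndlrs} applied to $s^-$, the reflected data $(\BFX^{-\delta f}G\text{-type numerator})$ fits the hypotheses of Corollary~\ref{ndlrs}, forcing $f\in\mathrm{Ann}(s^-)$ hence $f\in\mathrm{Ann}(s)$ — and then (a) with the vanishing last term yields $G=\sum_{k=0}^{2^n-2}\beta^\star_{\sBFi(k)}(f,s)$.

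The one step needing genuine care — the main obstacle — is pinning down the reciprocal dictionary precisely enough that the variable-inversion operator is \emph{well-defined} on each piece and matches up the two ``border partitions'' on the nose, including the index-set translation $a\mapsto a+\delta f$ and the compatibility of the sectioning operators $f\mapsto f^{(a)}$, $s\mapsto s^{(a)}$ with reflection (one must check $(s^-)^{(a)}=(s^{(-a)})^-$ and similar, so that $\beta^\times_{\sBFi(k)}(f,s^-)$ really transforms into the asserted $\beta^\star_{\sBFi(k)}(f,s)$ rather than something off by signs or shifts in the mixed variables). Once that dictionary is stated as a lemma, the rest is mechanical substitution into results already proved; the proof in the paper can reasonably just cite Proposition~\ref{minus}, Corollary~\ref{ndlrs}, Lemma~\ref{shortproof} and Theorem~\ref{borderseries} and say ``apply $^-$ and $^\ast$''.
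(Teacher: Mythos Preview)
Your approach is correct and is exactly what the paper intends: the paper's own proof is the single line ``These are straightforward consequences of the previous result,'' i.e.\ apply the reflection $^-$ and the reciprocal $^\ast$ to transport the right-shifting decomposition (Proposition~\ref{simpledecomp}, Lemma~\ref{shortproof}, Theorem~\ref{borderseries} and its Corollary) to the left-shifting setting. Your write-up is considerably more explicit than the paper's, and your identification of the ``dictionary'' (support shift $a\mapsto a+\delta f$, compatibility of sections with $^-$) as the only point needing care is accurate.
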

\begin{proof}
These are straightforward consequences of the previous result.
\end{proof}
\section{EVR sequences}

\subsection{1--D sequences}

It is convenient to begin with some results on 1--D sequences.

\begin{proposition} \label{1dgamma} Let $s \in S^1(R)^-$ 
and $\delta f \geq 1$.

(a) $f\Gamma(s) = \beta_0(f,s) + \Gamma(f\circ s)$

(b) $f \in \mbox{Ann}(s) \Longleftrightarrow f \Gamma (s) = \beta _0(f,s)$

(c) If $f \in \mbox{Ann}(s)$, then 
$\mbox{Ann}(s) = (f : \beta _0(f,s)/X).$
\end{proposition}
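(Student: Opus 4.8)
The plan is to prove the two inclusions $\mathrm{Ann}(s) \subseteq (f : \beta_0(f,s)/X)$ and $(f : \beta_0(f,s)/X) \subseteq \mathrm{Ann}(s)$ separately, using part~(b) as the workhorse. Note first that by part~(b), since $f \in \mathrm{Ann}(s)$ we have $f\Gamma(s) = \beta_0(f,s)$, and since $\delta f \geq 1$ Lemma~\ref{shortproof} (or just the definition) gives $X \mid \beta_0(f,s)$, so $\beta_0(f,s)/X$ is a well-defined element of $R[X]$; write $\beta_0 = \beta_0(f,s)$ and $b = \beta_0/X$ for brevity. The ideal quotient $(f : b)$ means $\{g \in R[X] : gb \in (f)\}$.

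For the inclusion $(f:b) \subseteq \mathrm{Ann}(s)$: suppose $g \in R[X]$ with $gb = hf$ for some $h \in R[X]$. Multiply by $\Gamma(s)$ and use $f\Gamma(s) = \beta_0 = Xb$: then $g\,b\,\Gamma(s)$ appears on one side, and I want to extract information about $g\Gamma(s)$. The cleanest route: from $gb = hf$ we get $gb\Gamma(s) = hf\Gamma(s) = hXb$, i.e. $b\,(g\Gamma(s) - hX) = 0$ in $L_1$. If $R$ (hence $R[X]$, hence $L_1 = R((X^{-1}))$ when $R$ is a domain) has no zero divisors, $b \neq 0$ forces $g\Gamma(s) = hX \in XR[X]$, and then Corollary~\ref{1dcharpoly} gives $g \in \mathrm{Ann}(s)$. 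If $R$ is an arbitrary commutative ring this multiplicative-cancellation step is the obstacle; one then argues degree-by-degree on coefficients, or — more likely what the author intends — observes that $b$ may be taken to have a unit leading coefficient issue, OR restricts implicitly to the case needed later. I would first try the domain argument cleanly and then remark on the general case; the honest expectation is that the general-ring version needs $\beta_0$ to be non–zero-divisor-like, which holds because its top coefficient relates to $f$'s.

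For the reverse inclusion $\mathrm{Ann}(s) \subseteq (f:b)$: let $g \in \mathrm{Ann}(s)$. I want $gb \in (f)$. Consider $gf \in \mathrm{Ann}(s)$ as well, and apply part~(b) to $g$ and to $f$: $g\Gamma(s) = \beta_0(g,s)$ and $f\Gamma(s) = \beta_0(f,s) = Xb$. Multiply the first by $f$ and the second by $g$: $fg\Gamma(s) = f\beta_0(g,s) = g\,Xb$, so $f\,\beta_0(g,s) = gXb$, whence $gb = f\cdot(\beta_0(g,s)/X)$ provided $X \mid \beta_0(g,s)$ — which holds when $\delta g \geq 1$ by Lemma~\ref{shortproof}; for $\delta g = 0$, $g$ is a constant in $\mathrm{Ann}(s)$ and one handles it directly ($g\Gamma(s)=0$ forces, with $s\neq 0$, appropriate conclusions, or $gb = f\cdot 0$ trivially after checking $gXb = 0$). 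Thus $gb \in (f)$, i.e. $g \in (f:b)$. The main obstacle is again the edge case $\delta g = 0$ and, as above, ensuring the cancellation/divisibility steps are valid over a general commutative ring; I expect the argument goes through because every identity used is an identity in $L_1$ and divisibility by $X$ is checked on supports, not by cancellation.

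Putting the two inclusions together yields $\mathrm{Ann}(s) = (f : \beta_0(f,s)/X)$, completing the proof. The key conceptual point — and the reason this is short — is that part~(b) converts the shift-annihilation condition into the single polynomial identity $f\Gamma(s) = \beta_0(f,s)$ in the Laurent series ring, after which everything is formal manipulation of $\Gamma(s)$.
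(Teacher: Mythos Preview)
Your approach matches the paper's: for $\mathrm{Ann}(s)\subseteq(f:\beta_0(f,s)/X)$ both you and the paper cross-multiply the identities $g\Gamma(s)=\beta_0(g,s)$ and $f\Gamma(s)=\beta_0(f,s)$ to obtain $g\beta_0(f,s)=f\beta_0(g,s)$, and for the reverse inclusion the paper writes $g\Gamma(s)=g\beta_0(f,s)/f=Xh$ and invokes Corollary~\ref{1dcharpoly}. The only difference is cosmetic---the paper divides by $f$ rather than cancelling $b$---and the paper does not pause over the zero-divisor concern you raise for general commutative rings.
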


\begin{proof} (a): This is immediate from Proposition~\ref{simpledecomp}.

(b): $f \in \mbox{Ann}(s) \Longleftrightarrow f\circ s = 0 
\Longleftrightarrow \Gamma (f\circ s) = 0 \Longleftrightarrow f \Gamma (s) = 
\beta _0(f,s).$ 

(c): If $g \in \mbox{Ann}(s)$, 
then $g\beta_0(f,s) = gf\Gamma (s) = 
\beta _0(g,s)f$, so that $g \in (f:\beta_0(f,s)/X)$. Conversely, if
$g\beta_0(f,s)/X= hf$ for some $h \in R[X]$, then $g\Gamma(s) = 
g\beta_0(f,s)/f = Xh$ and so by Corollary~\ref{1dcharpoly}, 
$g \in \mbox{Ann}(s).$
\end{proof}

The next result is a considerable generalization of Theorem 1 of
Prabhu \& Bose (1982) and Theorem 4.11 of Fitzpatrick \& Norton (1995), 
both of which use left--shifting. 

\begin{lemma} \label{genlemma} Let $R$ be a factorial domain, 
$s \in S^1(R)^-, \ f \in \mbox{Ann}(s)$ with $\delta f \geq 1$. Put  
$d = \gcd (f,\beta _0(f,s)/X)$ and $g = f/d$. Then

(a) $g \in \mbox{Ann}(s)$

(b) $\beta _0(g,s) = \beta _0(f,s)/d$

(c) $\gcd(g, \beta _0(g,s)/X) = 1$

(d) if $d = 1$ and $h \in \mbox{Ann}(s)$ , then $\beta_0(f,s) | \beta_0(h,s)$
and $h = ( \beta _0(h,s)/\beta _0(f,s)) \cdot f$.
\end{lemma}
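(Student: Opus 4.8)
The plan is to prove the four parts in the stated order, using the $1$--D identity $f\Gamma(s) = \beta_0(f,s) + \Gamma(f\circ s)$ from Proposition~\ref{1dgamma}(a), which for $f \in \mathrm{Ann}(s)$ collapses to $f\Gamma(s) = \beta_0(f,s)$, together with the characterization $h \in \mathrm{Ann}(s) \Longleftrightarrow h\Gamma(s) \in XR[X]$ from Corollary~\ref{1dcharpoly}. All the content is really the interaction of these two facts with the factoriality of $R[X]$ (which holds since $R$ is a factorial domain), so that gcd's and the cancellation property are available.

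First, for (a): write $\beta_0(f,s) = f\Gamma(s)$, divide by $d$, and use that $d \mid f$ and $d \mid \beta_0(f,s)/X$, hence $d \mid \beta_0(f,s)$, to get $(f/d)\Gamma(s) = \beta_0(f,s)/d \in R[X]$; since $X \mid \beta_0(f,s)$ but we must check $X \mid \beta_0(f,s)/d$ — this needs $\gcd(d,X)=1$, which follows because $\delta f \geq 1$ forces $X \nmid f$ only if $f(0)\neq 0$, so I should instead argue directly: $\beta_0(f,s)/d = g\Gamma(s)$, and $g\Gamma(s) \in R[X]$ already forces, via Corollary~\ref{1dcharpoly} applied carefully, that $g \in \mathrm{Ann}(s)$ provided the constant term vanishes. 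The cleanest route is: $g\Gamma(s) = \beta_0(f,s)/d$, and since $\beta_0(f,s)/X \in R[X]$ and $d \mid \beta_0(f,s)/X$ in $R[X]$, we have $\beta_0(f,s)/d = X \cdot (\beta_0(f,s)/(Xd)) \in XR[X]$, so by Corollary~\ref{1dcharpoly}, $g \in \mathrm{Ann}(s)$. This simultaneously gives (a) and, by Proposition~\ref{1dgamma}(b), identifies $\beta_0(g,s) = g\Gamma(s) = \beta_0(f,s)/d$, which is (b). Then (c) is immediate: $\gcd(g,\beta_0(g,s)/X) = \gcd(f/d, \beta_0(f,s)/(Xd))$ divides $\gcd(f,\beta_0(f,s)/X)/d = 1$ after the obvious checks that dividing through by $d$ genuinely removes the common factor — this is the standard fact that $\gcd(a/e, b/e) = \gcd(a,b)/e$ when $e \mid \gcd(a,b)$ in a GCD domain.

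For (d), assume $d = \gcd(f,\beta_0(f,s)/X) = 1$ and let $h \in \mathrm{Ann}(s)$. Then both $f\Gamma(s) = \beta_0(f,s)$ and $h\Gamma(s) = \beta_0(h,s)$, so cross-multiplying gives $h\beta_0(f,s) = h f \Gamma(s) = f\beta_0(h,s)$ in $R[X]$. Hence $f \mid h\beta_0(f,s)$; writing $\beta_0(f,s) = X\cdot(\beta_0(f,s)/X)$ and using $\gcd(f, \beta_0(f,s)/X) = 1$, I would conclude $f \mid hX$, and since $\delta f \geq 1$ — actually I need $\gcd(f,X)$ too; but from $\gcd(f,\beta_0(f,s)/X)=1$ and wanting $f \mid hX$ I get $f/\gcd(f,X) \mid h$, so I should split off the $X$-part of $f$. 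Alternatively, and more symmetrically: from $h\beta_0(f,s) = f\beta_0(h,s)$ and $\gcd(f,\beta_0(f,s)/X)=1$, factoriality gives $\beta_0(f,s)/X \mid \beta_0(h,s)/X$, i.e. $\beta_0(f,s) \mid \beta_0(h,s)$, say $\beta_0(h,s) = q \beta_0(f,s)$ with $q \in R[X]$; substituting back, $h\beta_0(f,s) = f q \beta_0(f,s)$, and cancelling the nonzero $\beta_0(f,s)$ (valid in the domain $R[X]$, noting $\beta_0(f,s) \neq 0$ since otherwise $\Gamma(s)=0$ forces $s=0$, excluded) yields $h = qf = (\beta_0(h,s)/\beta_0(f,s)) f$, which is exactly the claim.

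The main obstacle I anticipate is the bookkeeping around the factor $X$: one must be careful that $\beta_0(f,s)$ is genuinely divisible by $X$ (true by Lemma~\ref{shortproof} or by definition of the border polynomial) and that the gcd condition $\gcd(f,\beta_0(f,s)/X)$ is taken after dividing out $X$, so that the cancellation $\beta_0(f,s)/X \mid \beta_0(h,s)/X$ is the right coprimality statement to invoke — conflating $\beta_0(f,s)$ with $\beta_0(f,s)/X$ at the wrong moment would break the argument. A secondary subtlety is justifying that $\beta_0(f,s) \neq 0$: since $s \neq 0$ by standing assumption, $\Gamma(s) \neq 0$, and if $f \in \mathrm{Ann}(s)$ with $f \neq 0$ then $f\Gamma(s) = \beta_0(f,s)$; here $f\Gamma(s) \neq 0$ because $R((X^{-1}))$ is a domain (as $R$ is a domain), so $\beta_0(f,s) \neq 0$, licensing the cancellations. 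Everything else is routine unique-factorization-domain manipulation in $R[X]$.
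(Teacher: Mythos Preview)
Your proposal is correct and follows essentially the same route as the paper: for (a)--(c) you use $f\Gamma(s)=\beta_0(f,s)$, divide by $d$, apply Corollary~\ref{1dcharpoly}, and then the standard $\gcd(a/d,b/d)=\gcd(a,b)/d$ identity; for (d) you cross-multiply to get $h\,\beta_0(f,s)=f\,\beta_0(h,s)$, use coprimality of $f$ and $\beta_0(f,s)/X$ to deduce $\beta_0(f,s)/X \mid \beta_0(h,s)/X$, and cancel. Your extra care about $\beta_0(f,s)\neq 0$ (via $R((X^{-1}))$ being a domain) is a detail the paper leaves implicit; otherwise the arguments coincide, and your initial detour through ``$f\mid hX$'' is unnecessary once you take the symmetric route you settle on.
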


\begin{proof}
(a) Since $f \in \mbox{Ann}(s), f \Gamma (s) = \beta _0(f,s)$ 
and so $g \Gamma (s) = X\ (\beta _0(f,s)/X)/d \in XR[X]$. By 
Corollary~\ref{1dcharpoly}, $g \in \mbox{Ann}(s)$.

(b) $\beta _0(f,s)/d = f \Gamma (s)/d = g \Gamma (s) = \beta _0(g,s)$ by 
part (a) since $g \in \mbox{Ann}(s)$.

(c) By part (b), $\gcd(g,\beta _0(g,s)/X) = \gcd(g,(\beta _0(f,s)/X)/d) = 1$.

(d) From Proposition~\ref{1dgamma}, $h \beta _0(f,s)/X = f \beta _0(h,s)/X$ 
and since $d = 1,\ \beta _0(f,s)/X$ divides $\beta _0(h,s)/X.$
\end{proof}

\begin{remark} On the other hand, if $g \in \mbox{Ann}(s)\setminus\{0 \}$ has 
minimal degree, then $\gcd(g,\beta _0(g,s)/X) \in R$ --- otherwise we can 
replace $g$ by a characteristic polynomial of smaller degree by part (a) 
of the previous result. If $g$ is also primitive, then 
since $d=\gcd(g,\beta_0(g,s)/X) \in R, \ d | g_0$, and inductively, 
$d$ divides all the coefficients of $g$. Thus $d = 1$. 
\end{remark}

\begin{theorem} Let $R$ be a factorial domain and Ann($s) \neq (0).$ Then

(a) $\mbox{Ann}(s)$ is generated by a {\em primitive} polynomial $g$, say.

(b) $g$ is unique up to a unit of $R$, and $\gcd(g, \beta_0(g,s)/X) = 1.$

(c) $\delta g = \min \{\delta f : f \in \mbox{Ann}(s)\setminus\{0 \} \}$.
\end{theorem}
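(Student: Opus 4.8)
The plan is to construct the generator directly. Choose $g\in\mathrm{Ann}(s)\setminus\{0\}$ of least degree and normalise it to be primitive; this is legitimate because if $c=\cont(g)\in R\setminus\{0\}$ and $g=cg'$, then $c\,(g'\circ s)=(cg')\circ s=g\circ s=0$, whence $g'\circ s=0$ since $R$ is a domain, so $g'\in\mathrm{Ann}(s)$ with $\delta g'=\delta g$. Note $\delta g\geq 1$: a nonzero constant $g\in R$ would give $g\cdot s=g\circ s=0$, forcing $s=0$, contrary to assumption. Hence $\beta_0(g,s)$ is defined, and it is nonzero, since by Proposition~\ref{1dgamma}(b) it equals $g\,\Gamma(s)$, a nonzero element of the domain $R((X^{-1}))$ (as $g\neq0$ and $\Gamma(s)\neq0$).

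Next I would read off the coprimality asserted in (b). Since $g$ has minimal degree and is primitive, the Remark following Lemma~\ref{genlemma} applies: $d:=\gcd(g,\beta_0(g,s)/X)$ lies in $R$ (otherwise Lemma~\ref{genlemma}(a) would produce the nonzero characteristic polynomial $g/d$ of degree $\delta g-\delta d<\delta g$), and as $d$ then divides every coefficient of $g$ it divides $\cont(g)=1$, so $d$ is a unit. This gives $\gcd(g,\beta_0(g,s)/X)=1$.

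Now (a) drops out of Lemma~\ref{genlemma}(d) applied with $f=g$ and $d=1$: every nonzero $h\in\mathrm{Ann}(s)$ satisfies $\beta_0(g,s)\mid\beta_0(h,s)$ and $h=\bigl(\beta_0(h,s)/\beta_0(g,s)\bigr)\,g\in(g)$, while $0\in(g)$ trivially; together with $g\in\mathrm{Ann}(s)$ this yields $\mathrm{Ann}(s)=(g)$. For (c), any nonzero $f\in\mathrm{Ann}(s)=(g)$ is $f=qg$ with $q\in R[X]\setminus\{0\}$, so $\delta f=\delta q+\delta g\geq\delta g$, and hence $\delta g=\min\{\delta f:f\in\mathrm{Ann}(s)\setminus\{0\}\}$. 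For the uniqueness in (b): two primitive generators each divide the other in $R[X]$, so they are associates there; since $R$ is a domain the units of $R[X]$ are exactly those of $R$, so the two generators differ by a unit of $R$.

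I do not anticipate a genuine obstacle, since the real work is already encapsulated in Lemma~\ref{genlemma} (especially part (d), which converts the coprimality of $g$ and $\beta_0(g,s)/X$ into principal generation) and in the Remark that follows it. The only point of care is the pair of preliminary reductions in the first paragraph — that passing to the primitive part keeps us inside $\mathrm{Ann}(s)$, and that $\delta g\geq1$ with $\beta_0(g,s)\neq0$, so that the border-polynomial machinery is applicable — after which the argument is formal.
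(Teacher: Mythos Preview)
Your proof is correct and rests on the same engine as the paper's, namely Lemma~\ref{genlemma}(d): once you have a characteristic polynomial $g$ with $\gcd(g,\beta_0(g,s)/X)=1$, every $h\in\mathrm{Ann}(s)$ is a multiple of $g$. The tactical difference is in how that coprime $g$ is produced. You pick $g$ of minimal degree, normalise it to be primitive, and invoke the Remark to force $\gcd(g,\beta_0(g,s)/X)=1$. The paper instead starts from an \emph{arbitrary} $f\in\mathrm{Ann}(s)$ with $\delta f\geq1$ and sets $g=f/\gcd(f,\beta_0(f,s)/X)$; parts (a) and (c) of Lemma~\ref{genlemma} immediately give $g\in\mathrm{Ann}(s)$ with the required coprimality, and one then passes to the primitive part. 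The paper's route is marginally more constructive (it tells you how to get $g$ from any given $f$, without first locating a minimal-degree element) and dispatches (b) and (c) as routine; your route makes the minimality in (c) explicit from the outset and spells out the uniqueness argument. Both are short and equivalent in substance.
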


\begin{proof} We need only prove (a). For any $f \in 
\mbox{Ann}(s)$ with $\delta f \geq 1, \ g= f/ \gcd(f,\beta _0(f,s)/X)$ 
generates $\mbox{Ann}(s)$ by Lemma~\ref{genlemma}, and 
we may take $g$ to be primitive.
\end{proof}

\begin{definition} If $R$ is a factorial domain and Ann$(s) \neq (0)$, 
we denote the {\em primitive}
generator of Ann($s$) (which is unique up to a unit of $R$) by $\gamma(s).$
\end{definition}

We remark that $\gamma (s)$ may be computed using the minimal realization
algorithm of Norton (1995a); see {\em loc. cit.} Corollary 3.28. 
For an alternative approach to $\gamma(s)$ using polynomial remainder
sequences, see Fitzpatrick \& Norton (1995), Section 4.5.

\subsection{Associated sequences} 

\begin{definition} For $n \geq 2$ and $\BFi \ll \BFn$, let
$s^{(\hbox{\bf \scriptsize i},\ast)} 
\in 
S^{\mid\hbox{\bf \scriptsize i}\mid}(R[[X_{\hbox{\bf \scriptsize i}'}]])$ 
be given by 
$$s^{(\hbox{\bf \scriptsize i},\ast)}_{\ a} = 
\sum_{a' \in -{\Bbb N}^{\hbox{\bf \scriptsize i}'}} 
s_{(a,a')} X^{\ a'}_{\hbox{\bf \scriptsize i}'}$$
for $a \in -{\Bbb N}^{\hbox{\bf \scriptsize i}}$.
\end{definition}

Thus for $\BFi \ll \BFn$, the superscript 
$^{(\hbox{\bf \scriptsize i},\ast)}$ is used to suggest 
{\em summation over} $\hbox{\bf i}'$  
(whereas for $a \in -{\Bbb N}^{\hbox{\bf \scriptsize i}}$, the 
superscript $^{(a)}$ is used to denote a {\em section}). 
Special cases of these associated
sequences were used in Fitzpatrick \& Norton (1990).

For $\BFi \ll \BFn$, we let $R[X_{\sBFi}]$ act on $n$--D sequences via:

$$\left(\sum_{a \in {\scriptsize Supp(f)}} 
f_a X_{\hbox{\bf \scriptsize i}}^a \circ s \right) _{b} 
=\sum_{a\in {\scriptsize Supp(f)}} f_a \ s_{b-(a,\hbox{\bf \scriptsize 0}')}$$ 
where $b \leq \BFZ$ and $\BFZ ' \in {\Bbb N}^{\sBFi '}$.  

\begin{lemma} \label{sectionlemma} 
Let $n \geq 2, \BFi \ll \BFn, f \in R[X_{\hbox{\bf \scriptsize i}}], 
f' \in R[X_{\hbox{\bf \scriptsize i}'}]$ 
and $a \in -{\Bbb N}^{\hbox{\bf \scriptsize i}}, 
a' \in -{\Bbb N}^{\hbox{\bf \scriptsize i}'}$. Then 

(a) $(f'\circ s^{(a)})_{a'} = (f'\circ s)_{(a,a')}$

(b) $s^{(\hbox{\bf \scriptsize i},\ast)}_a 
= \G(s^{(a)})(X_{\hbox{\bf \scriptsize i}'}^{-\sBFone})$

(c) $\G(f\circ s^{(\hbox{\bf \scriptsize i},\ast)})
(X_{\hbox{\bf \scriptsize i}}^{-\sBFone}) 
= \G(f\circ s)(\BFX^{-\sBFone})$.
\end{lemma}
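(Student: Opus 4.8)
The plan is to prove the three parts of Lemma~\ref{sectionlemma} essentially by unwinding the definitions, with parts (b) and (c) building on part (a). Throughout I write $\delta' = \delta_{\sBFi'}f$, recall that $s^{(a)} \in S^{|\sBFi'|}(R)$ is defined by $s^{(a)}_{a'} = s_{(a,a')}$, and that the $R[X_{\sBFi}]$--action on $s$ shifts only in the $\sBFi$--coordinates, inserting $\BFZ'$ in the remaining ones.

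For part (a): take $f' = \sum_{\BFZ' \leq b' \leq \delta f'} f'_{b'} X_{\sBFi'}^{b'} \in R[X_{\sBFi'}]$. Then by the definition of the shift action on the $|\sBFi'|$--dimensional sequence $s^{(a)}$, for $a' \leq \BFZ'$ we have $(f' \circ s^{(a)})_{a'} = \sum_{b'} f'_{b'} \, s^{(a)}_{a'-b'} = \sum_{b'} f'_{b'} \, s_{(a,\,a'-b')}$. On the other hand, viewing $f'$ as an element of $R[X_{\sBFi'}] \subseteq P_n$ and applying the $P_n$--action to $s$ at the index $(a,a') \leq \BFZ$, we get $(f' \circ s)_{(a,a')} = \sum_{b'} f'_{b'} \, s_{(a,a') - (\BFZ, b')} = \sum_{b'} f'_{b'}\, s_{(a,\,a'-b')}$. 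The two expressions agree, which is (a).

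For part (b): this is just the definition of $\Gamma$ applied to the section. By definition, $\Gamma(s^{(a)}) = \sum_{a' \leq \BFZ'} s^{(a)}_{a'}\, X_{\sBFi'}^{a'}$; substituting $X_{\sBFi'} \mapsto X_{\sBFi'}^{-\sBFone}$ (i.e. replacing each variable by its inverse, as the notation $(X_{\sBFi'}^{-\sBFone})$ indicates) and using $s^{(a)}_{a'} = s_{(a,a')}$ turns this into $\sum_{a' \leq \BFZ'} s_{(a,a')}\, X_{\sBFi'}^{-a'}$, which is precisely $s^{(\sBFi,\ast)}_a$ by its defining formula $s^{(\sBFi,\ast)}_a = \sum_{a' \in -\mathbb{N}^{\sBFi'}} s_{(a,a')} X_{\sBFi'}^{a'}$ — the sign discrepancy in the exponent being absorbed by the inverse substitution, and care is needed only to track this consistently.

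For part (c): write $f = \sum_{0 \leq b \leq \delta f} f_b X_{\sBFi}^{b}$. Using part (a) in the form $(f \circ s)^{(a)} = f \circ s^{(a)}$ (which follows coordinatewise from (a) once one checks the $\sBFi$--shift commutes with taking the $a$--section — a direct index computation like the one above), together with part (b) applied to $f \circ s$ in place of $s$, one gets $\G(f \circ s^{(\sBFi,\ast)})$ evaluated at $X_{\sBFi}^{-\sBFone}$ equals $\sum_{a \leq \BFZ} (f \circ s^{(\sBFi,\ast)})_a X_{\sBFi}^{-a}$, and since $(f \circ s^{(\sBFi,\ast)})_a = \sum_b f_b\, s^{(\sBFi,\ast)}_{a-b} = \sum_b f_b \sum_{a'} s_{(a-b,a')} X_{\sBFi'}^{a'}$, collecting terms shows this equals $\sum_{(a,a') \leq \BFZ} (f \circ s)_{(a,a')}\, X_{\sBFi}^{-a} X_{\sBFi'}^{a'} = \G(f \circ s)(\BFX^{-\sBFone})$. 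I expect the only real friction to be bookkeeping: keeping the split $\BFX = (X_{\sBFi}, X_{\sBFi'})$ straight, and being scrupulous about which variables get the $^{-\sBFone}$ substitution and the resulting sign on exponents, since everything here is definitional once the notation is pinned down. There is no substantive obstacle.
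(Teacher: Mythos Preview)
Your approach is correct and essentially the same as the paper's: part (a) is the same direct index computation they give, and the paper dispatches (b) and (c) with the single line ``These are also easy consequences of the definitions.''

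However, you have misread the notation $\G(\,\cdot\,)(\BFX^{-\sBFone})$. In this paper the generating function $\G(t)$ of a sequence $t \in S^m(R')^-$ is by definition the series $\sum_{a \leq \BFZ} t_a \BFX^a$, which already lies in $R'[[X_1^{-1},\ldots,X_m^{-1}]]$; writing $\G(t)(\BFX^{-\sBFone})$ is only a reminder of which ring this lives in, not an instruction to substitute $\BFX \mapsto \BFX^{-\sBFone}$. So in (b) there is no ``inverse substitution'' and no sign discrepancy to absorb: both $s^{(\sBFi,\ast)}_a$ and $\G(s^{(a)})$ are literally the same formal sum $\sum_{a' \leq \BFZ'} s_{(a,a')} X_{\sBFi'}^{a'}$. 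Likewise in (c) your exponents $X_{\sBFi}^{-a}$ should be $X_{\sBFi}^{a}$ (and note your final display is internally inconsistent, carrying $-a$ on the $\sBFi$--variables but $+a'$ on the $\sBFi'$--variables). Once this is corrected, both sides of (c) are simply $\sum_{(a,a') \leq \BFZ}(f \circ s)_{(a,a')}\, X_{\sBFi}^a X_{\sBFi'}^{a'}$, and the proof is the one--line unravelling of definitions you intended.
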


\begin{proof}
(a)
\begin{eqnarray*}
(f'\circ s^{(a)})_{a'}\\
& = & \sum_{d'\in \mbox{{\scriptsize Supp}}(f')} f'_{d'} s^{(a)}_{a'-d'}
= \sum_{d'\in \mbox{\scriptsize Supp}(f')} f'_{d'} s_{(a'-d',a)} \\
& = & \sum_{d'\in \mbox{{\scriptsize Supp}}(f')} 
f'_{d'} s_{(a',a)-(d',\hbox{\bf \scriptsize 0})} 
= (f'\circ s)_{(a',a)}. 
\end{eqnarray*}
(b), (c) These are also easy consequences of the definitions.
\end{proof}

The characteristic ideals of associated sequences are related as follows:

\begin{proposition} \label{annassoc}
Let $n \geq 2$ and $\BFi \ll \BFn$. Then
$$\bigcap_{a'\in -{\Bbb N}^{\hbox{\bf \scriptsize i}'}} \mbox{Ann}(s^{(a')}) 
= \mbox{Ann}(s) \cap R[X_{\hbox{\bf \scriptsize i}}] = 
\mbox{Ann}(s^{(\hbox{\bf \scriptsize i},\ast)}).$$
\end{proposition}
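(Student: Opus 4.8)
The plan is to prove the two claimed equalities by a chain of elementary inclusions, using the section operators, the associated sequence $s^{(\sBFi,\ast)}$, and Lemma~\ref{sectionlemma}. I would phrase everything in terms of the right-shift action of $R[X_{\sBFi}]$ on $n$--D sequences (with the $\BFZ'$ padding) that was just introduced, so that $f\circ s$ for $f \in R[X_{\sBFi}]$ really does mean the same whether we view $s \in S^n(R)^-$ or pass to sections.

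First I would handle the equality $\bigcap_{a'} \mbox{Ann}(s^{(a')}) = \mbox{Ann}(s) \cap R[X_{\sBFi}]$. For $f \in R[X_{\sBFi}]$, the point is simply that $f\circ s = 0$ as an $n$--D sequence if and only if every one of its sections $(f\circ s)^{(a')}$ vanishes (sections just reindex the entries, so $f\circ s = 0$ iff all entries are $0$ iff all sections are $0$). By Lemma~\ref{sectionlemma}(a), $(f\circ s)^{(a')} = f\circ s^{(a')}$ for each $a' \in -{\Bbb N}^{\sBFi'}$ (note here the roles of $a$ and $a'$ are swapped relative to the lemma's statement, i.e.\ we section over $\sBFi'$ and keep a polynomial in $X_{\sBFi}$; this is the symmetric version of \ref{sectionlemma}(a) and I would just invoke it with $\BFi$ and $\BFi'$ interchanged). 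Hence $f \in \mbox{Ann}(s)$ iff $f \in \mbox{Ann}(s^{(a')})$ for all $a'$, which is exactly the first equality.

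Next I would prove $\mbox{Ann}(s) \cap R[X_{\sBFi}] = \mbox{Ann}(s^{(\sBFi,\ast)})$. Fix $f \in R[X_{\sBFi}]$. The associated sequence $s^{(\sBFi,\ast)}$ takes values in $R[[X_{\sBFi'}^{-\sBFone}]]$, and $f\circ s^{(\sBFi,\ast)}$ is computed coefficient-wise in that coefficient ring; by the definition of $s^{(\sBFi,\ast)}$, the $X_{\sBFi'}^{a'}$--coefficient of $(f\circ s^{(\sBFi,\ast)})_a$ is $(f\circ s)_{(a,a')} = (f\circ s^{(a')})_a$. Therefore $f\circ s^{(\sBFi,\ast)} = 0$ iff all these coefficients vanish iff $f\circ s^{(a')} = 0$ for every $a'$, i.e.\ iff $f \in \bigcap_{a'}\mbox{Ann}(s^{(a')})$, which by the first equality equals $\mbox{Ann}(s)\cap R[X_{\sBFi}]$. (Alternatively one can read this straight off Lemma~\ref{sectionlemma}(b),(c): $\G(f\circ s^{(\sBFi,\ast)})(X_{\sBFi}^{-\sBFone}) = \G(f\circ s)(\BFX^{-\sBFone})$ is the zero series iff $f\circ s=0$.) This gives the second equality and completes the proof.

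The main obstacle is purely bookkeeping: keeping the index partition $\BFZ^n = \BFZ^{\sBFi}\times\BFZ^{\sBFi'}$ consistent, being careful that "section of $s$" in Lemma~\ref{sectionlemma} is stated with the section taken over the $\BFi$--coordinates whereas here it is cleanest to section over the $\BFi'$--coordinates, and making sure the shift action of $R[X_{\sBFi}]$ (which pads the $\sBFi'$--index by $\BFZ'$) is compatible with the section and the associated-sequence constructions. Once the notational conventions are pinned down, each equivalence is a one-line unwinding of definitions, so no real analytic or algebraic difficulty arises.
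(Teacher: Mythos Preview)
Your proposal is correct and follows essentially the same approach as the paper: the first equality is obtained from Lemma~\ref{sectionlemma}(a) (with the roles of $\BFi$ and $\BFi'$ interchanged, exactly as you note), and the second equality from Lemma~\ref{sectionlemma}(c). The paper's proof is terser but uses the identical two ingredients; your explicit remark about the $\BFi\leftrightarrow\BFi'$ swap is a helpful clarification that the paper leaves implicit.
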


\begin{proof} 
Let $f \in R[X_{\hbox{\bf \scriptsize i}}]$. By Lemma~\ref{sectionlemma}(a), 
$$\forall (a' \in -{\Bbb N}^{\hbox{\bf \scriptsize i}'})( f\circ s^{(a')} = 0)
\Longleftrightarrow \forall (b \in -{\Bbb N}^n) ((f\circ s)_b = 0)
\Longleftrightarrow f \in \mbox{Ann}(s).$$ 
To prove the second equality, note that 
$f\circ s = 0 \Longleftrightarrow 
f\circ s^{(\hbox{\bf \scriptsize i},\ast)} = 0$ 
by Lemma~\ref{sectionlemma}(c). 
\end{proof}

\subsection{Some properties of EVR sequences}

It follows from the previous subsection that for $n \geq 2$ and $i \in \BFn$, 
there are $n$ associated sequences $s^{(i,\ast)} = s^{(\{i\},\ast)} \in 
S^1(R[[\widehat{\bf X}_i^{-\hbox{\bf \scriptsize 1}}]])^{-}$ given by 
$$s^{(i,\ast)}_{\ j} 
= \sum_{j' \in -{\Bbb N}^{ \hbox{\bf \scriptsize n}\setminus\{i \}} } 
s_{(j,j')} \widehat{\bf X}_i^{\ j'}$$ for $j \leq 0,\ 
\mbox{Ann}(s^{(i,\ast)}) = \mbox{Ann}(s) \cap 
R[X_i]$ and $\Gamma (f_i\circ s^{(i,\ast)})(X_i^{-1}) = 
\Gamma (f_i\circ s)(\BFX^{-\hbox{\bf \scriptsize 1}}).$

The next result is a considerable generalization of 
Prabhu \& Bose (1982), Theorem 1 
and Fitzpatrick \& Norton (1990), Corollary 4.2:

\begin{theorem} \label{evrgamma} Let $f_i \in R[X_i]$ with 
$\delta f_i \geq 1$
for all $i \in \BFn$ and let $f = \prod_{i=1}^n f_i$. 
The following are equivalent:

(a) for all $i \in \BFn, \ f_i \in \mbox{Ann}(s)$ 

(b) $f\Gamma (s) \in  \BFX P_n$

(c) $f\Gamma (s) = \beta _0(f,s)$.
\end{theorem}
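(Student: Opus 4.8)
The plan is to prove the cycle (a) $\Ra$ (c) $\Ra$ (b) $\Ra$ (a), since (c) $\Ra$ (b) is immediate ($\beta_0(f,s) \in \BFX P_n$ because $\beta_0(f,s)$ is divisible by $X_1 \cdots X_n$) and the real content lies in the other two implications. For (a) $\Ra$ (c): if each $f_i \in \mbox{Ann}(s)$, then in particular $f_i \in \mbox{Ann}(s) \cap R[X_i]$ for every $i$, so $s$ is EVR and $f = \prod_{i=1}^n f_i \in \mbox{Ann}(s)$ since $\mbox{Ann}(s)$ is an ideal. Then $f \circ s = 0$, hence $\Gamma(f \circ s) = 0$, and by Proposition~\ref{simpledecomp} the decomposition $f \cdot \Gamma(s) = \sum_{k=0}^{2^n-1} \beta_k(f,s)$ collapses: the top summand $\beta_{2^n-1}(f,s) = \Gamma(f \circ s)$ vanishes, and I must argue that all the border Laurent series $\beta_k(f,s)$ for $0 < k < 2^n-1$ vanish as well, leaving only $\beta_0(f,s)$.

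The key point — and what I expect to be the main obstacle — is showing the intermediate border Laurent series vanish. Here I would use Theorem~\ref{borderseries}, which gives $\beta_k(f,s) = \beta^\times_{\sBFi}(f,s)$ for $\BFi = \BFi(k) \ll \BFn$, expressed as a sum of cross--products $\left(\nu^{a'+\sBFone'} f^{(a)}\right)(X_{\sBFi'}) \cdot \Gamma\left(X_{\sBFi}^a \circ s^{(-a')}\right)(X_{\sBFi}^{-\sBFone})$. Because $f$ factors as $\prod_i f_i$ with $f_i \in R[X_i]$, for a fixed nonempty proper $\BFi$ we can split $f = f_{\sBFi} \cdot f_{\sBFi'}$ where $f_{\sBFi} = \prod_{i \in \sBFi} f_i \in R[X_{\sBFi}]$ and $f_{\sBFi'} = \prod_{i \in \sBFi'} f_i \in R[X_{\sBFi'}]$. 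Since $\BFi \ll \BFn$ is proper, $\BFi'$ is nonempty, so $f_{\sBFi'}$ contains at least one factor $f_j$ with $j \in \BFi'$ and $\delta f_j \geq 1$. I expect the section $f^{(a)}$ to factor through $f_{\sBFi'}$ in a way that makes the divided--difference factor $\nu^{a'+\sBFone'} f^{(a)}$ "absorb" the fact that $f_j \circ s = 0$; alternatively, and perhaps more cleanly, I would apply Proposition~\ref{annassoc}, which says $\mbox{Ann}(s) \cap R[X_{\sBFi}] = \mbox{Ann}(s^{(\sBFi,\ast)})$, to the associated sequence: since $f_{\sBFi} \in \mbox{Ann}(s) \cap R[X_{\sBFi}]$ (as a product of the $f_i$ with $i \in \sBFi$, each in $\mbox{Ann}(s)$), we get $f_{\sBFi} \circ s^{(\sBFi,\ast)} = 0$, and then recognize the power--series--in--$X_{\sBFi}^{-1}$ part of $\beta^\times_{\sBFi}(f,s)$ as built from $\Gamma$ of things annihilated by $f_{\sBFi}$; combined with the polynomial part in $X_{\sBFi'}$ coming from $f_{\sBFi'}$, the product lands in $\BFX_{\sBFi'} P$ but with $X_{\sBFi}$--support only in $(-\infty, \BFZ]$, forcing the $\beta_k$ coefficient--pattern (support in $\beta_k(-\infty,\delta f]$, which has $\pi_i = (-\infty,0]$ for $i \in \sBFi$ and $\pi_j = [1,\delta_j f]$ for $j \in \sBFi'$) to be empty. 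I would double--check this support bookkeeping carefully, as it is the crux.

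For (b) $\Ra$ (a): if $f \Gamma(s) \in \BFX P_n$, then by Proposition~\ref{charpoly} (the equivalence (a) $\Leftrightarrow$ (d), or directly (a) $\Leftrightarrow$ (b)) we get $f \in \mbox{Ann}(s)$, i.e. $f \circ s = 0$. To descend from $f = \prod_i f_i$ to each individual $f_i$, I would argue one coordinate at a time using the associated sequence $s^{(i,\ast)}$: by the remarks following Proposition~\ref{annassoc}, $\mbox{Ann}(s) \cap R[X_i] = \mbox{Ann}(s^{(i,\ast)})$, and $f \in \mbox{Ann}(s)$ together with $f = f_i \cdot \prod_{j \neq i} f_j$. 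Here I would invoke that $\prod_{j \neq i} f_j \in R[\widehat{\bf X}_i]$ acts invertibly enough — more precisely, I suspect one reduces to the 1--D statement over the domain (or ring) $R[[\widehat{\bf X}_i^{-\sBFone}]]$: the hypothesis $f \Gamma(s) \in \BFX P_n$ restricted/projected appropriately gives $f \cdot \Gamma(s^{(i,\ast)})(X_i^{-1}) \in X_i \cdot R[[\widehat{\bf X}_i^{-\sBFone}]][X_i]$, and since $f = f_i g$ with $g = \prod_{j\ne i} f_j$ a polynomial in $X_i$ of degree $0$ times... — wait, $g$ does not involve $X_i$ at all, so as an element of $R[[\widehat{\bf X}_i^{-\sBFone}]][X_i]$ it is a "constant" (a unit times a power series), hence one can cancel it if $R[[\widehat{\bf X}_i^{-\sBFone}]]$ is a domain, or at least conclude $f_i \circ s^{(i,\ast)} = 0$ directly. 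This last cancellation step over a general commutative ring $R$ (where $g$ need not be a nonzerodivisor) is the one place I would be most careful; if it genuinely fails in that generality, the theorem as stated presumably still holds via the explicit decomposition of Theorem~\ref{borderseries} applied in reverse, reading off that each $\beta_k$ vanishing forces the relevant $f_i \circ s = 0$, and I would fall back on that argument rather than a naive cancellation.
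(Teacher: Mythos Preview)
Your cycle (a) $\Rightarrow$ (c) $\Rightarrow$ (b) $\Rightarrow$ (a) diverges from the paper at the first step. The paper instead proves (a) $\Rightarrow$ (b) directly by a short one-variable-at-a-time argument (for each $i$, the $X_i$-support of $f\Gamma(s)$ lies in $[1,\delta_i f]$, using $f_i\in\mbox{Ann}(s)$ and that $\prod_{j\neq i}f_j$ has $X_i$-degree $0$), and then deduces (b) $\Rightarrow$ (c) by a disjoint-support trick: once (a) holds, $\Gamma(f\circ s)=0$, so Proposition~\ref{simpledecomp} gives $f\Gamma(s)-\beta_0(f,s)=\sum_{k=1}^{2^n-2}\beta_k(f,s)$; the left side is supported in $[\BFone,\delta f]=\beta_0$ and the right side in $\bigcup_{k\geq 1}\beta_k$, which are disjoint, so both sides vanish. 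Theorem~\ref{borderseries} is never invoked.

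Your (a) $\Rightarrow$ (c) via Theorem~\ref{borderseries} can be completed, but not by the argument you give. Your final support paragraph --- that the cross-product lands in $X_{\sBFi'}P$ with $X_{\sBFi}$-support in $(-\infty,\BFZ]$ --- merely restates $\mbox{Supp}(\beta_k(f,s))\subseteq\beta_k$, which holds by definition and does not force vanishing. The actual mechanism is algebraic: since $f=f_{\sBFi}f_{\sBFi'}$ one has $f^{(a)}=(f_{\sBFi})_a\,f_{\sBFi'}$, so $\nu^{a'+\sBFone'}f^{(a)}=(f_{\sBFi})_a\cdot\nu^{a'+\sBFone'}f_{\sBFi'}$, and summing the cross-product over $a$ collapses the $\Gamma$-factor to $\Gamma(f_{\sBFi}\circ s^{(-a')})$, which is zero by (a) and Lemma~\ref{sectionlemma}(a). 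For (b) $\Rightarrow$ (a) your reduction to the $1$-D case over $R[[\widehat{\bf X}_i^{-\sBFone}]]$ matches the paper's proof, and your flagged cancellation concern is precisely the delicate point there as well.
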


\begin{proof} 
(a) $\Longrightarrow$ (b): 
For all $i \in \BFn$ and $a_i \leq 0$, the coefficient of $X_i^{a_i}$ in 
$f\Gamma(s)$ is
$$(f\Gamma (s))_{(a_i, \hbox{\bf \scriptsize 0}')} = 
(f\circ s)_{(a_i,\hbox{\bf \scriptsize 0}')}$$ 
which is zero since $f$ is a multiple of $f_i \in \mbox{Ann}(s).$
Hence Supp$(f\Gamma (s)) \subseteq [\BFone, \delta f].$

(b) $\Longrightarrow$ (a): Fix $i \in \BFn$. Put $t = s^{(i,\ast)}$ 
and let $f\Gamma(s) = \BFX g$, where $g \in P_n.$ 
By Lemma~\ref{sectionlemma}, 
$\Gamma(t)(X_i^{-1}) = \Gamma(s)(\BFX^{-\hbox{\bf \scriptsize 1}}).$
Thus 
$$\delta_i (f_i \Gamma (t))  = \delta_i (f \Gamma (t)) 
= \delta_i (f\Gamma(s)) = \delta_i \BFX g \geq 1,$$
whereas for $j \neq i,$
$$\delta_j (f_i\Gamma(t)) \leq \delta_jf_i + \delta_j\Gamma(t) 
\leq \delta_j \Gamma(t) \leq 0$$
i.e. $f_i\Gamma(t) \in
X_i(R[[\widehat{\bf X}_i^{-\hbox{\bf \scriptsize 1}}]])[X_i].$ 
Thus by Corollary~\ref{1dcharpoly}, $f_i \in \mbox{Ann}(t)$
and by Lemma~\ref{sectionlemma} again, $\Gamma 
(f_i\circ s) = \Gamma (f_i\circ t) = 0$ i.e. $f_i \in \mbox{Ann}(s)$.

(b) $\Longrightarrow$ (c): Let $f\Gamma (s) = \BFX g \in P_n$ where 
$\delta g \leq \delta f-\BFone$. Since (b) $\Longleftrightarrow$ (a), 
$\Gamma(f\circ s) = 0$ and so
$$\BFX g - \beta_0(f,s) = f\Gamma (s) - \beta_0(f,s) = 
\sum_{k=1}^{2^n-2}\beta_k(f,s).$$ 
The support of the left--hand side is contained in $[\BFone,\delta f]$, 
whereas the support of the right--hand side is 
contained in $(-\infty,\delta f]\setminus[\BFone,\delta f]$ by construction. 
Therefore both sides vanish and $f\Gamma (s) = \beta _0(f,s).$
\end{proof}

We now discuss $\mbox{Ann}(s) \cap R[X_i]$, 
which is an ideal of $R[X_i]$. So when $R$ is a field, 
$\mbox{Ann}(s) \cap R[X_i]$ is 
principal and has a {\em monic} generator. 
We will see that this is also true more generally. 

For $n=1$, the following notion was introduced in Fitzpatrick \& Norton
(1995):

\begin{definition} A factorial domain $R$ will be called {\em potential} 
if for all $n \geq 1,\  S_n$ is factorial.
\end{definition}

In fact, for the known examples $R$ for which $R[[X]]$ is factorial, $S_n$ is also 
factorial. Principal ideal domains, $R[\BFX]$  (where $R$ is a field
or a discrete valuation ring) are potential domains, but
not every factorial domain is potential: 
see Bourbaki (1972), Exercise 8(c), p.566. We refer the reader to 
Bourbaki (1972), Proposition 8, p.511 and Exercise 9(c), p.566 and to Fossum 
(1973) for more details.

\begin{lemma} \label{monicgen} Let $R$ be factorial and $s \in S^1(R)^-$. 
If $R[[X]]$ is factorial, we can assume that $\gamma(s)$ is monic.
\end{lemma}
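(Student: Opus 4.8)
The plan is to start from the fact that $\mbox{Ann}(s)$ is a nonzero ideal of the factorial domain $R[X]$, so by the preceding theorem it has a primitive generator $\gamma(s)$, unique up to a unit of $R$. The claim ``we can assume $\gamma(s)$ is monic'' really amounts to showing that the leading coefficient $c$ of $\gamma(s)$ is a unit of $R$, for then $c^{-1}\gamma(s)$ is simultaneously primitive and monic, and we are done. So the whole proof reduces to proving $c \in R^\times$ under the extra hypothesis that $R[[X]]$ is factorial.

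First I would recall the characterization (Corollary~\ref{1dcharpoly}): $f \in \mbox{Ann}(s)$ iff $f\Gamma(s) \in XR[X]$. Since $s \neq 0$, fix some $b \leq 0$ with $s_b \neq 0$; equivalently $\Gamma(s) \neq 0$ in $S_1 = R[[X^{-1}]]$. Writing $\gamma = \gamma(s)$ with $\delta\gamma = d$ and leading coefficient $c$, we have $\gamma\Gamma(s) = \beta_0(\gamma,s) \in XR[X]$ by Proposition~\ref{1dgamma}(b), a polynomial of degree $\le d$. The key idea is to pass to $R[[X^{-1}]] = S_1$ and look at the \emph{lowest}-degree behaviour: $X^{-d}\gamma$ is an element of $S_1$ with constant term $c$, and $X^{-d}\beta_0(\gamma,s)$ is an element of $S_1$ as well (its support lies in $[1-d,\,0]\cup\ldots$, in any case in $(-\infty,0]$). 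Thus in $S_1$ we get $(X^{-d}\gamma)\cdot\Gamma(s) = X^{-d}\beta_0(\gamma,s)$. Now I would exploit factoriality of $S_1$: examine the $X^{-1}$-adic valuations (orders) of both sides. If $c$ were a non-unit, then $X^{-d}\gamma$ would be divisible in $R$ — hence in $S_1$ — by some irreducible $p \mid c$ in the constant term but, crucially, $\gamma$ primitive means $p$ does \emph{not} divide all coefficients of $\gamma$, so $p \nmid X^{-d}\gamma$ in $S_1$; the point is to derive a contradiction by comparing divisibility of the two sides of $(X^{-d}\gamma)\Gamma(s) = X^{-d}\beta_0(\gamma,s)$.

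The cleaner route, which I would actually pursue, is: since $\gamma\Gamma(s) = \beta_0(\gamma,s)$ with $\delta\beta_0(\gamma,s) \le d = \delta\gamma$, write $\gamma^* = X^d\gamma(X^{-1})$, so $\gamma^*$ has constant term $c$ and $\gamma^* \cdot (X^{-d}\Gamma(s)\text{-type expression})$ relates things in $S_1$. More directly: in $S_1$, $u := X^{-d}\gamma$ has $u_0 = c$ and $u\Gamma(s) = X^{-d}\beta_0(\gamma,s) =: v \in S_1$. If $R[[X]] \cong S_1$ (via $X \leftrightarrow X^{-1}$) is factorial, take any irreducible $p \in R$ dividing $c$; since $\gamma$ is primitive, $p$ does not divide all coefficients of $\gamma$, so $p \nmid u$ in $S_1$, hence $p \nmid u$ in $R[[X^{-1}]]$. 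But then consider the equation $u\Gamma(s)=v$ modulo $p$: reducing coefficientwise mod $p$, $\bar u \bar\Gamma(s) = \bar v$ in $(R/p)[[X^{-1}]]$ — wait, $R/p$ need not be a domain. So instead I would argue with valuations at the prime $(p)$ in the factorial domain $S_1$: let $w$ be the $p$-adic valuation on $S_1$. Then $w(v) = w(u) + w(\Gamma(s)) = 0 + w(\Gamma(s))$ since $p\nmid u$. Now I need $w(\Gamma(s)) \ge 0$, which is automatic, and to conclude I compare with the structure of $v$; the cleanest finish is to run Lemma~\ref{genlemma} with $f = \gamma$: there $d=\gcd(\gamma,\beta_0(\gamma,s)/X) = 1$ by Theorem~(b), so by Lemma~\ref{genlemma}(d) every $h \in \mbox{Ann}(s)$ satisfies $h = (\beta_0(h,s)/\beta_0(\gamma,s))\gamma$; apply this to $h = c^{-1}\gamma$ after extending scalars — actually the decisive observation is simply that the \emph{monic} polynomial obtained by the minimal-realization / Berlekamp–Massey construction over the fraction field $K$ of $R$ lies in $\mbox{Ann}(s)$, equals $\gamma$ up to a unit of $K$, and has denominators controlled by the factorial structure of $S_1$ so that the unit lies in $R$.

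The main obstacle is this last point: showing the $K$-monic minimal generator actually has its leading coefficient, after clearing content, a \emph{unit} of $R$ rather than merely an element of $R$. I expect the author handles it by the $p$-adic valuation argument on $S_1$ sketched above: from $\gamma\Gamma(s) = \beta_0(\gamma,s)$ in $S_1$ one reads off, for each prime $p \mid c$, that $p^{v_p(c)} \mid \beta_0(\gamma,s)$ in $S_1$ hence (since $\beta_0 \in R[X]$) in $R[X]$, and combined with primitivity of $\gamma$ this forces, via $d=1$ from Lemma~\ref{genlemma}, that $p$ is actually a unit — contradiction unless $c$ had no prime divisors, i.e. $c \in R^\times$. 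So the heavy lifting is the translation of ``monic'' into ``leading coefficient has no irreducible factor'' and the use of factoriality of $S_1 \cong R[[X]]$ to run the valuation bookkeeping; everything else is bookkeeping with Corollary~\ref{1dcharpoly} and Proposition~\ref{1dgamma}.
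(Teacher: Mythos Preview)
The paper's own proof is a one--line citation: it invokes the $P_1$--module isomorphism $\mbox{Ann}(s)\cong\mbox{Ann}(s^{-})$ of Proposition~\ref{minus} and then appeals to Fitzpatrick \& Norton (1995), Corollary~4.8. There is no self--contained argument in the paper to compare yours against.

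Your proposal, by contrast, tries to prove the result directly, and the target is correctly identified: it is enough to show that the leading coefficient $c$ of the primitive generator $\gamma$ is a unit of $R$. The passage to $S_1\cong R[[X^{-1}]]$ and the equation $u\,\Gamma(s)=v$ with $u=X^{-d}\gamma$, $v=X^{-d}\beta_0(\gamma,s)$ are a reasonable set--up. But none of the several lines you sketch is actually completed:
\begin{itemize}
\item The worry that ``$R/p$ need not be a domain'' is misplaced: in a factorial domain every irreducible is prime, so $R/(p)$ \emph{is} a domain, and you abandoned the reduction--mod--$p$ route for the wrong reason.
\item The $p$--adic valuation step stalls: from $w(u)=0$ you get $w(\Gamma(s))=w(v)$, which is true but does not contradict anything.
\item The ``decisive observation'' via minimal realization over $K=\mbox{Frac}(R)$ is asserted, not argued; you give no reason why the denominators arising over $K$ are units of $R$ rather than arbitrary nonzero elements.
\item The final sketch claims ``$p^{v_p(c)}\mid\beta_0(\gamma,s)$ in $S_1$'' directly from $\gamma\Gamma(s)=\beta_0(\gamma,s)$; this does not follow (primitivity of $\gamma$ gives $v_p(\gamma)=0$, not $v_p(\gamma)=v_p(c)$). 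Even granting it, you never explain how to combine it with $\gcd(\gamma,\beta_0(\gamma,s)/X)=1$ in $R[X]$ to force $p\in R^{\times}$ --- and here there is a genuine obstacle: coprimality in $R[X]$ does \emph{not} transfer to $R[[X^{-1}]]$, since any polynomial with unit constant term becomes a unit there.
\end{itemize}
So the proposal is a sequence of plausible starts, none of which reaches a contradiction. The substance of the lemma resides in the external reference the paper cites.
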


\begin{proof}
Ann$(s) \cong Ann(s^-)$, and so the result follows from Fitzpatrick
\& Norton (1995), Corollary 4.8.
\end{proof}

\begin{theorem} \label{potential}
If $R$ potential and $s$ is EVR, then for all 
$i \in \hbox{\bf n}, \mbox{Ann}(s) \cap R[X_i]$ is principal and 
has a unique {\em monic} generator.
\end{theorem}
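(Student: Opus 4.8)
The plan is to fix $i \in \BFn$ and work with the associated $1$--D sequence $t = s^{(i,\ast)} \in S^1(R[[\widehat{\bf X}_i^{-\BFone}]])^-$. By Proposition~\ref{annassoc}, $\mbox{Ann}(s) \cap R[X_i] = \mbox{Ann}(t)$, and since $s$ is EVR there is an $f_i \in \mbox{Ann}(t)$ with $\delta f_i \geq 1$, so $\mbox{Ann}(t) \neq (0)$. The base ring here is $R[[\widehat{\bf X}_i^{-\BFone}]] = S_{n-1}$ (in the $n-1$ variables $\widehat{\bf X}_i$), which is factorial because $R$ is potential; hence by the $1$--D theory developed above (the theorem on primitive generators together with Lemma~\ref{monicgen}, applied over the factorial domain $S_{n-1}$) the ideal $\mbox{Ann}(t)$ is principal in $S_{n-1}[X_i]$, generated by a polynomial that we may take to be monic in $X_i$. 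Call this generator $\gamma_i \in S_{n-1}[X_i]$.

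The remaining --- and I expect main --- obstacle is to show that this monic generator $\gamma_i$ actually lies in $R[X_i]$, i.e. that its coefficients, a priori elements of $S_{n-1} = R[[\widehat{\bf X}_i^{-\BFone}]]$, are in fact constants in $R$. The key point is that $\gamma_i$ divides the genuine polynomial $f_i \in R[X_i] \subseteq S_{n-1}[X_i]$ in $S_{n-1}[X_i]$, and $f_i$ has all coefficients in $R$. Write $f_i = \gamma_i \cdot h$ with $h \in S_{n-1}[X_i]$. Since $\gamma_i$ is monic in $X_i$, I can run the division algorithm for $f_i$ by $\gamma_i$ in $X_i$ (legitimate because the leading coefficient of $\gamma_i$ is the unit $1$): this is performed over the ring $R$ if the coefficients of $\gamma_i$ are in $R$, but conversely, the uniqueness of monic division shows that the quotient and remainder computed over $S_{n-1}$ agree with whatever is computed over any subring containing the coefficients. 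So I instead argue as follows: among all monic (in $X_i$) polynomials in $\mbox{Ann}(t)$ of least $X_i$-degree, there is one, $g$, obtained as $f_i / \gcd(f_i, \beta_0(f_i,t)/X_i)$ by Lemma~\ref{genlemma}; I must check that this gcd, taken in $S_{n-1}[X_i]$, can be chosen in $R[X_i]$. Here $\beta_0(f_i,t)/X_i = \sum_{a} t_{-a}\,\nu^{a+1}f_i$ by Lemma~\ref{shortproof}, and since $f_i \in R[X_i]$ each $\nu^{a+1}f_i \in R[X_i]$, so $\beta_0(f_i,t)/X_i$ is an $S_{n-1}$-linear combination of polynomials in $R[X_i]$; its gcd with $f_i \in R[X_i]$ --- computed via the Euclidean-type algorithm for the one whose leading term we control --- will then divide $f_i$ with coefficients that can be normalized to lie in $R$, because gcd of a polynomial over $R$ with anything, when the result is taken to divide that $R$-polynomial, forces $R$-rationality of the divisor up to units.

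More cleanly, I would phrase the final step using \emph{reduction to the fraction field}: let $K = \mathrm{Frac}(R)$ and consider $\mbox{Ann}(s) \cap K[X_i]$ versus $\mbox{Ann}(s) \cap R[X_i]$. Over the field $K$ (or rather over $\mathrm{Frac}(S_{n-1})$), the monic generator $\hat\gamma_i$ of the $1$--D annihilator is unique and is characterized purely in terms of the sequence $t$ and minimal $X_i$-degree, so it does not change when we extend scalars; it equals $f_i / \gcd(f_i, \beta_0(f_i,t)/X_i)$ computed over $K[X_i]$, which is a factor of $f_i \in R[X_i]$ and hence (by Gauss's lemma, $R$ being factorial) a scalar multiple of a primitive factor of $f_i$ in $R[X_i]$; normalizing to make it monic in $X_i$ gives an element $\gamma_i \in R[X_i]$ (the leading coefficient of the primitive factor is a unit of $R$ since $f_i$ itself need not be monic but its relevant factor's leading coefficient divides that of $f_i$ --- and actually for monicity we just divide by that leading coefficient in $K$, landing back in $R[X_i]$ precisely when it is a unit, which I verify using that $\gamma_i$ divides the \emph{monic} generator over $S_{n-1}$). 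Then $\gamma_i \in \mbox{Ann}(s) \cap R[X_i]$, it is monic, and it generates: any $g \in \mbox{Ann}(s) \cap R[X_i]$ lies in $\mbox{Ann}(t)$, so $\gamma_i \mid g$ in $S_{n-1}[X_i]$; monic division then shows $\gamma_i \mid g$ in $R[X_i]$ as well. Uniqueness of the monic generator is immediate since any two monic generators divide each other and have the same $X_i$-degree. The one genuine subtlety to be careful about is the leading-coefficient/monicity bookkeeping when passing between $R[X_i]$, $S_{n-1}[X_i]$ and their fraction fields; everything else is a direct appeal to the $1$--D results and Gauss's lemma.
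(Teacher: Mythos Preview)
Your overall route matches the paper's exactly: reduce to the $1$--D associated sequence $t=s^{(i,\ast)}$ over $U=R[[\widehat{\bf X}_i^{-\BFone}]]$, identify $\mbox{Ann}(s)\cap R[X_i]$ with $\mbox{Ann}(t)$ via Proposition~\ref{annassoc}, note that $U$ and $U[[X_i^{-1}]]=S_n$ are factorial because $R$ is potential, and invoke Lemma~\ref{monicgen}. That is the whole of the paper's proof; it does \emph{not} discuss the point you single out as the main obstacle, namely why the monic generator $\gamma_i$ produced over $U[X_i]$ actually has coefficients in $R$.

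Where your argument stumbles is in the attempts to justify that step. The fraction-field paragraph drifts between $K=\mathrm{Frac}(R)$ and $\mathrm{Frac}(S_{n-1})$: the element $\beta_0(f_i,t)/X_i$ has coefficients in $U$, not in $K$, so $\gcd(f_i,\beta_0(f_i,t)/X_i)$ is taken in $U[X_i]$, and Gauss's lemma over $R$ gives no direct hold on divisors living in $U[X_i]$. Your closing remark --- that once $\gamma_i\in R[X_i]$ is known, monic division transfers divisibility from $U[X_i]$ down to $R[X_i]$ --- is correct and is exactly what finishes the proof, but it presupposes the point at issue. A clean way to close the gap: $\gamma_i$ is monic and divides $f_i\in R[X_i]$ in $\mathrm{Frac}(U)[X_i]$, so its coefficients are elementary symmetric functions of some subset of the roots of $f_i$, all of which lie in $\overline{K}$; hence each coefficient of $\gamma_i$ lies in $U\cap\overline{K}$. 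Since $K$ is algebraically closed in $K[[\widehat{\bf X}_i^{-\BFone}]]\supseteq U$ (an element of $K[[Y]]$ algebraic over $K$ has its constant term as a $K$-root of the minimal polynomial, forcing that polynomial to have degree one) and $U\cap K=R$, one obtains $\gamma_i\in R[X_i]$.
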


\begin{proof} We can assume that $n \geq 2$. Fix $i$ and let $U = 
R[[\widehat{\bf X}_i^{-\hbox{\bf \scriptsize 1}}]]$ and
$t = s^{(i,\ast)} \in S^1(U)^-$. By Proposition~\ref{annassoc}, 
$\mbox{Ann}(s) \cap R[X_i] = \mbox{Ann}(t)$. Also, $U$ and 
$U[[X_i^{-1}]] = S_n$ are factorial since $R$ is potential. 
Thus the result follows from Lemma~\ref{monicgen}.
\end{proof}

The next result simplifies and extends 
Fitzpatrick \& Norton (1990), Theorem 4.3:  

\begin{theorem} \label{Annsi} 
Let $R$ be a potential domain and for all $i \in \BFn$, let $f_i \in 
\mbox{Ann}(s) \cap R[X_i]$ with $\delta f_i \geq 1$. Put 
$f = \prod_{i=1}^n f_i$ and $d_i = \gcd(f_i, \beta _0(f,s)/X_i )$. 
Then for all $i \in \BFn$, there is a unit $u_i$ of $R$ such that
$u_i f_i/d_i$ is the monic generator of  Ann$(s) \cap R[X_i].$
\end{theorem}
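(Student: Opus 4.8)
The strategy is to reduce the $n$-dimensional statement to the $1$-dimensional theory developed in Section 4.1, working with one index $i$ at a time and with the associated sequence $t=s^{(i,\ast)}\in S^1(U)^-$, where $U=R[[\widehat{\bf X}_i^{-\hbox{\bf \scriptsize 1}}]]$. First I would fix $i\in\BFn$ and record the facts I can quote: by Proposition~\ref{annassoc}, $\mbox{Ann}(s)\cap R[X_i]=\mbox{Ann}(t)$; since $f_i\in\mbox{Ann}(s)\cap R[X_i]$, we have $f_i\in\mbox{Ann}(t)$; and since $R$ is potential, $U$ is factorial, so the results of Section 4.1 (Lemma~\ref{genlemma} and the theorem on the primitive generator) apply verbatim to $t$ over $U$. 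Thus $g:=f_i/\gcd(f_i,\beta_0(f_i,t)/X_i)$ generates $\mbox{Ann}(t)=\mbox{Ann}(s)\cap R[X_i]$, and by Theorem~\ref{potential} this ideal has a unique monic generator in $R[X_i]$.

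The crux is then to identify $\beta_0(f_i,t)$ with $\beta_0(f,s)$ (viewed appropriately), so that the divisor $\gcd(f_i,\beta_0(f,s)/X_i)$ appearing in the statement is exactly the $d_i$ that arises from the $1$-D construction applied to $t$. The key input is Theorem~\ref{evrgamma}: since every $f_j\in\mbox{Ann}(s)$, the product $f=\prod_j f_j$ satisfies $f\Gamma(s)=\beta_0(f,s)\in\BFX P_n$, and $f\circ s=0$. Next I would use Lemma~\ref{sectionlemma}(b),(c) to transport this to $t$: $\Gamma(t)(X_i^{-1})=\Gamma(s)(\BFX^{-\sBFone})$, so $f_i\Gamma(t)$ and $f\Gamma(s)$ agree after substituting $X_i^{-1}$ and collecting the $\widehat{\bf X}_i^{-1}$-series into coefficients in $U$. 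Concretely, $f_i\Gamma(t)=\beta_0(f_i,t)$ (by Proposition~\ref{1dgamma}(b), as $f_i\in\mbox{Ann}(t)$), and one checks that this equals $\beta_0(f,s)$ rewritten as an element of $XU[X_i]$ — here one divides $f$ by $\prod_{j\neq i}f_j$, which is absorbed since those factors annihilate $s$. This gives $\beta_0(f_i,t)=\beta_0(f,s)$ up to the identification of coefficient rings, hence $\gcd(f_i,\beta_0(f_i,t)/X_i)=\gcd(f_i,\beta_0(f,s)/X_i)=d_i$, where the gcd may be taken in $R[X_i]$ because $f_i$ has coefficients in $R$ and $R$ is factorial.

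Finally, Lemma~\ref{genlemma}(a) (applied to $t$ over $U$) gives $g=f_i/d_i\in\mbox{Ann}(t)=\mbox{Ann}(s)\cap R[X_i]$, and Lemma~\ref{genlemma}(c) gives $\gcd(g,\beta_0(g,t)/X_i)=1$, so $g$ is a primitive generator of $\mbox{Ann}(t)$ in the sense of Section 4.1. Since that ideal has a unique monic generator (Theorem~\ref{potential}), and a primitive generator over a factorial domain is unique up to a unit, the monic generator must be $u_i\,g=u_i\,f_i/d_i$ for a suitable unit $u_i\in R$ (the unit lies in $R$, not merely in $U$, because both $g$ and the monic generator have coefficients in $R[X_i]\subseteq U[X_i]$ and the leading-coefficient ratio is a unit of $R$). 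Running this argument for each $i\in\BFn$ completes the proof.

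\textbf{Main obstacle.} The delicate point is the bookkeeping in the identification $\beta_0(f_i,t)=\beta_0(f,s)$: one must be careful that dividing out the factors $f_j$ ($j\neq i$) from $f$ is legitimate at the level of border polynomials, and that the gcd computed in $U[X_i]$ coincides with the gcd in $R[X_i]$. Both hinge on $f_j\circ s=0$ forcing $\Gamma((\prod_{j\neq i}f_j)\circ s)=0$ together with the factoriality of $R$, but the coefficient-ring shuffling between $R[X_i]$, $U[X_i]$, and $S_n$ is where the proof needs the most care.
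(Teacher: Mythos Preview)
Your reduction to the one--variable theory over $U=R[[\widehat{\bf X}_i^{-\sBFone}]]$ is a natural idea, but the ``crux'' step is wrong as stated, and fixing it is not just bookkeeping. From $\Gamma(t)=\Gamma(s)$ and Theorem~\ref{evrgamma} you get
\[
\beta_0(f_i,t)=f_i\Gamma(t)=f_i\Gamma(s)=\frac{\beta_0(f,s)}{\prod_{j\neq i}f_j},
\]
not $\beta_0(f_i,t)=\beta_0(f,s)$; this is exactly the identity used in the proof of Corollary~\ref{Annsigen}. Consequently the divisor you obtain from Lemma~\ref{genlemma} applied over $U$ is
\[
e_i=\gcd_{\,U[X_i]}\!\left(f_i,\ \beta_0(f,s)\big/\Bigl(X_i\!\prod_{j\neq i}f_j\Bigr)\right),
\]
and you have to prove that this agrees (up to a unit of $R$) with $d_i=\gcd_{P_n}(f_i,\beta_0(f,s)/X_i)$. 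That comparison is not free: the factor $\prod_{j\neq i}f_j$ does not even lie in $U[X_i]$, so you cannot simply ``remove a coprime factor'' inside $U[X_i]$; and the content of the $f_j$'s can interact with $f_i$ over $R$. Your remark that the division is ``absorbed since those factors annihilate $s$'' conflates $f_j\circ s=0$ with $f_j$ acting trivially on the border polynomial, which it does not. Likewise, the claim that $g=f_i/e_i$ automatically lies in $R[X_i]$ (so that the unit lands in $R$) presupposes what you are trying to prove.

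By contrast, the paper's argument stays entirely inside $P_n$ and avoids the ring change. It takes the monic generators $\gamma_i$ (Theorem~\ref{potential}), sets $\gamma=\prod\gamma_i$, and uses Theorem~\ref{evrgamma} twice to get $\gamma\,\beta_0(f,s)=f\,\beta_0(\gamma,s)$. One then checks $\gcd(\gamma,\beta_0(\gamma,s)/\BFX)=1$ via Gauss' Lemma, so unique factorisation in $P_n$ gives $\gamma=u\,f/\gcd(f,\beta_0(f,s)/\BFX)$; finally $\gcd(f,\beta_0(f,s)/\BFX)=\prod_i d_i$ because each $f_i\in R[X_i]$, and matching factors yields $\gamma_i=u_if_i/d_i$. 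If you want to salvage your route, you would need to supply an honest argument that $e_i$ and $d_i$ are associates in $R[X_i]$, which in effect reproduces this global factorisation step.
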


\begin{proof} For $i \in \BFn$, let $\gamma _i$ be the monic generator of 
$\mbox{Ann}(s) \cap R[X_i] \mbox{ and } 
\gamma = \prod_{i=1}^n \gamma _i.$
By Theorem~\ref{evrgamma}, 
$$\gamma \beta _0(f,s) = \gamma f \Gamma (s) = f \beta _0(\gamma ,s).$$ 
Now $\gcd(\gamma ,\beta _0(\gamma ,s)/\BFX) 
\in R$, otherwise we could replace some $\gamma _j$ by a polynomial of smaller 
degree. Since $P_n$ is factorial, $\gamma $ is primitive by Gauss' 
Lemma and so $\gcd(\gamma ,\beta _0(\gamma ,s)/\BFX) = 1$. Hence there is a 
unit $u$ of $R$ with 
$\gamma = u f / \gcd(f,\beta _0(f,s)/\BFX)$. Also, 
$$\gcd(f,\beta _0(f,s)/\BFX) = \gcd( \prod f_i, \beta _0(f,s)/\BFX ) 
= \prod \gcd(f_i,\beta _0(f,s)/X_i) = \prod d_i$$ 
since $f_i \in R[X_i]$. Thus 
$\prod \gamma _i= u \prod (f_i/ d_i)$ and since $P_n$ is factorial, 
$\gamma _i = u_i f_i / d_i$ for some unit $u_i$ of $R$.
\end{proof}

We now combine Theorems~\ref{evrgamma} and ~\ref{Annsi} to 
characterize generating functions of EVR lrs:

\begin{corollary} \label{Annsigen} Let $f_i \in R[X_i]$ 
with $\delta f_i \geq 1$ 
for $i \in \BFn$, and put $f = \prod_{i=1}^nf_i$. 

(a) If $R$ is potential and $f_i \in \mbox{Ann}(s)$ for all 
$i \in \BFn$, then $\Gamma (s) = \beta _0(f,s)/f$ 
and we can assume that for all $i \in \BFn, f_i$ is monic, 
$\gcd(f_i,\beta _0(f,s)/X_i) = 1$ and $\mbox{Ann}(s) \cap R[X_i] = (f_i)$. 

(b) If for all $i \in \BFn, f_i$ is monic, $g \in R[\BFX], 
\delta g \leq \delta f-\BFone$ and $\gcd(f_i, g) = 1$, then $s$ defined by 
$\Gamma (s) = \BFX g/f$ is an (EVR) lrs with $\BFX g = \beta _0(f,s)$ and 
$\mbox{Ann}(s) \cap R[X_i] = (f_i)$.
\end{corollary}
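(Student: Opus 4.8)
The plan is to derive both parts from the structural results already in hand. For part~(a): since $f_i \in \mathrm{Ann}(s)$ for all $i$, Theorem~\ref{evrgamma} gives immediately that $f\Gamma(s) = \beta_0(f,s)$, hence $\Gamma(s) = \beta_0(f,s)/f$ as an identity in $S_n$ (note $f$ is a product of polynomials of positive degree in distinct variables, so $\BFX^{-\delta f}f$ is a unit in $S_n$ once the $f_i$ are monic). To justify the normalisation, I would invoke Theorem~\ref{Annsi}: with $d_i = \gcd(f_i,\beta_0(f,s)/X_i)$, the element $u_i f_i/d_i$ is the monic generator $\gamma_i$ of $\mathrm{Ann}(s)\cap R[X_i]$; replacing $f_i$ by $\gamma_i$ changes neither the hypothesis ``$f_i\in\mathrm{Ann}(s)$'' nor (by Lemma~\ref{genlemma}(b), applied in the one variable $X_i$ to the associated sequence $s^{(i,\ast)}$, or directly) the quotient $\beta_0(f,s)/f = \Gamma(s)$. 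After this replacement $f_i$ is monic, $\gcd(f_i,\beta_0(f,s)/X_i)=1$ by Lemma~\ref{genlemma}(c), and $\mathrm{Ann}(s)\cap R[X_i]=(f_i)$ by construction.

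For part~(b): define $s$ by $\Gamma(s) = \BFX g/f$; since each $f_i$ is monic, $\BFX^{-\delta f}f$ is a unit in $S_n$ and $\mathrm{Supp}(\BFX g)\subseteq(-\BFone,\delta f-\BFone]$, so by Corollary~\ref{ndlrs} (the converse direction) $\BFX g/f\in S_n$ and $s$ is a well-defined lrs with $f\in\mathrm{Ann}(s)$. Then $f\Gamma(s) = \BFX g \in \BFX P_n$, so Theorem~\ref{evrgamma} (condition (b) $\Rightarrow$ (a),(c)) yields $f_i\in\mathrm{Ann}(s)$ for every $i$ and $\BFX g = f\Gamma(s) = \beta_0(f,s)$. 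In particular $s$ is EVR. It remains to identify $\mathrm{Ann}(s)\cap R[X_i]$: working with the associated sequence $t=s^{(i,\ast)}\in S^1(R[[\widehat{\bf X}_i^{-\BFone}]])^-$, Proposition~\ref{annassoc} gives $\mathrm{Ann}(s)\cap R[X_i]=\mathrm{Ann}(t)$, and via Lemma~\ref{sectionlemma} one has $f_i\Gamma(t)(X_i^{-1}) = (f_i\Gamma(s))(\BFX^{-\BFone})$ restricted appropriately; since $\gcd(f_i,g)=1$ and $f_i$ is monic, Lemma~\ref{genlemma}(d) (in one variable, over the factorial domain $R[[\widehat{\bf X}_i^{-\BFone}]]$ — here I use that $R$ potential makes this ring factorial) forces $f_i$ to generate $\mathrm{Ann}(t)$, i.e. $\mathrm{Ann}(s)\cap R[X_i]=(f_i)$.

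The step I expect to be the main obstacle is the identification $\mathrm{Ann}(s)\cap R[X_i]=(f_i)$ in part~(b), specifically extracting the right one-variable coprimality statement after passing to the associated sequence $t=s^{(i,\ast)}$. The subtlety is that $\beta_0(f,s) = \BFX g$ is a polynomial in all $n$ variables, and one must check that its ``$X_i$-part'' relative to $t$ is exactly $X_i$ times (a unit multiple of) $\beta_0(f_i,t)/X_i$, so that $\gcd(f_i,g)=1$ in $R[\BFX]$ descends to $\gcd(f_i,\beta_0(f_i,t)/X_i)=1$ in $R[[\widehat{\bf X}_i^{-\BFone}]][X_i]$. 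This is where the hypothesis that $R$ is potential is essential — it guarantees $R[[\widehat{\bf X}_i^{-\BFone}]]$ is factorial so that Lemma~\ref{genlemma} applies — and where care with the bookkeeping of sections versus the full generating function is needed. Everything else reduces to bookkeeping and direct appeals to Theorem~\ref{evrgamma}, Corollary~\ref{ndlrs}, and Theorem~\ref{Annsi}.
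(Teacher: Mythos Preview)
Your proposal is correct, and for part~(a) it aligns with the paper's argument (the paper simply selects each $f_i$ as the monic generator of $\mathrm{Ann}(s)\cap R[X_i]$ up front via Theorem~\ref{potential}, then shows $d_i=\gcd(f_i,\beta_0(f,s)/X_i)=1$ by minimality and monicity; you reach the same place via Theorem~\ref{Annsi} and Lemma~\ref{genlemma}). For part~(b), however, the paper's route is considerably shorter than yours: once Theorem~\ref{evrgamma} gives $\BFX g=\beta_0(f,s)$, one has $\beta_0(f,s)/X_i=(\prod_{j\ne i}X_j)\,g$, so the hypothesis $\gcd(f_i,g)=1$ together with $f_i\in R[X_i]$ (hence coprime to each $X_j$ for $j\ne i$) gives $\gcd(f_i,\beta_0(f,s)/X_i)=1$ directly in $P_n$, and then Theorem~\ref{Annsi} with $d_i=1$ yields $\mathrm{Ann}(s)\cap R[X_i]=(f_i)$ in one line. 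Your passage to the associated sequence $t=s^{(i,\ast)}$ and appeal to Lemma~\ref{genlemma}(d) over $R[[\widehat{\bf X}_i^{-\BFone}]]$ is valid but amounts to re-deriving the relevant special case of Theorem~\ref{Annsi}; the ``main obstacle'' you flag (descending the gcd condition to the section level) is an artefact of that detour and disappears once you stay in $P_n$ and cite Theorem~\ref{Annsi}. Both arguments tacitly require $R$ potential in~(b), as does the paper's.
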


\begin{proof} For each $i \in \BFn, f_i$ can be chosen as the monic 
generator of $\mbox{Ann}(s) \cap R[X_i] = \mbox{Ann}(s^{(i,\ast)})$ by 
Theorem~\ref{potential}. Since 
$$f_i\Gamma (s^{(i,\ast)}) = f_i\Gamma (s) = \beta _0(f,s) / 
\prod_{j\neq i} f_j$$ 
we must have $d_i = \gcd(f_i,\beta _0(f,s)/X_i) \in R$ 
--- for otherwise by Lemma~\ref{genlemma}, we can replace $f_i$ by a 
characteristic polynomial of smaller degree. 
Since $d_i$ divides each coefficient of $f_i$, which is monic, $d_i = 1$.

For the converse, we have $\BFX g = \beta _0(f,s)$ from 
Theorem~\ref{evrgamma}.
Thus $\gcd(f_i, \beta _0(f,s)/X_i) = 1$ and by Theorem~\ref{Annsi}, 
$\mbox{Ann}(s) \cap R[X_i] = (f_i).$
\end{proof}

For completeness, we give an alternative way of finding the generator of 
$\mbox{Ann}(s) \cap R[X_i]$ (Corollary ~\ref{Annlcm} below), generalizing 
Fitzpatrick \& Norton (1990), Theorem 4.10.

\begin{lemma} \label{intersect} 
Let $n \geq 2$, $R$ be a domain, let $f_i \in \mbox{Ann}(s)\cap R[X_i]$ 
with $\delta f_i \geq 1$
for all $i \in \hbox{\bf n}$, and put $\Delta'_i
= \delta (\prod_{j=1,j\neq i}^n f_j) - \hbox{\bf 1}' 
\in {\Bbb N}^{\hbox{\bf \scriptsize n}\setminus\{i \}}$. Then 
$$\bigcap_{a' \in {-\Bbb N}^{\hbox{\bf \scriptsize n} \setminus \{i \}}} 
\mbox{Ann}(s^{(a')}) = 
\bigcap_{-\Delta'_i \leq a'\leq \hbox{\bf \scriptsize 0}'} \mbox{Ann}(s^{(a')}).$$
\end{lemma}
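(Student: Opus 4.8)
Only one inclusion requires work: since the box $[-\Delta'_i,{\bf 0}']$ is contained in $-{\Bbb N}^{{\bf n}\setminus\{i\}}$, the left-hand intersection is trivially contained in the right-hand one, and the plan is to prove the reverse inclusion $\supseteq$. Fix $f$ in the right-hand intersection; since each $\mbox{Ann}(s^{(a')})$ lies in $R[X_i]$, so does $f$. I would first replace $s$ by the shifted sequence $g:=f\circ s\in S^n(R)^-$. Since each $f_j\in\mbox{Ann}(s)$ and $S^n(R)^-$ is a $P_n$-module with $P_n$ commutative, $f_j\circ g=f_j\circ(f\circ s)=f\circ(f_j\circ s)=0$ for every $j\neq i$. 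Also, for every $a'$ with $-\Delta'_i\leq a'\leq{\bf 0}'$ and every $c_i\leq 0$, $g_{(c_i,a')}=(f\circ s)_{(c_i,a')}=(f\circ s^{(a')})_{c_i}=0$, because $f\in\mbox{Ann}(s^{(a')})$ for such $a'$. Thus the $(n-1)$-dimensional sections of $g$ in the $\widehat{\bf X}_i$-directions vanish on the box, and it suffices to prove $g=0$.

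Next, fix $c_i\leq 0$ and set $v:=(g_{(c_i,a')})_{a'\leq{\bf 0}'}\in S^{n-1}(R)^-$, a sequence in the variables $\widehat{\bf X}_i$. Restricting the identities $f_j\circ g=0$ to the slice $\pi_i^{-1}(c_i)$ gives $f_j\circ v=0$ for every $j\neq i$, so $v$ (in place of $s$) satisfies hypothesis (a) of Theorem~\ref{evrgamma} for the $n-1$ polynomials $(f_j)_{j\neq i}$ and their product $F:=\prod_{j\neq i}f_j$; hence $F\,\Gamma(v)=\beta_0(F,v)$. By Lemma~\ref{shortproof}, $\beta_0(F,v)=\widehat{\bf X}_i\sum_{{\bf 0}'\leq a'\leq\delta F-{\bf 1}'}v_{-a'}\,\nu^{a'+{\bf 1}'}F$. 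Since $R$ is a domain no cancellation occurs among the leading terms of the $f_j$, so $\delta F=(\delta f_j)_{j\neq i}$ and $\delta F-{\bf 1}'=\Delta'_i$; therefore every index $-a'$ occurring in this sum lies in the box, where $v_{-a'}=g_{(c_i,-a')}=0$. Thus $\beta_0(F,v)=0$, i.e. $F\,\Gamma(v)=0$. As $R$ is a domain, the ring of Laurent series over $R$ in $\widehat{\bf X}_i^{-{\bf 1}}$ is again a domain and $F$ is a nonzero element of it, so $\Gamma(v)=0$ and $v=0$. Since $c_i\leq 0$ was arbitrary, $g=0$, i.e. $f\circ s=0$; consequently $(f\circ s^{(a')})_{c_i}=(f\circ s)_{(c_i,a')}=0$ for all $a'\leq{\bf 0}'$ and $c_i\leq 0$, so $f\in\bigcap_{a'\leq{\bf 0}'}\mbox{Ann}(s^{(a')})$, as required.

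The only real content is recognizing that each section $v$ is eventually rectilinear in the $n-1$ transverse directions (it is annihilated by $f_j$ for every $j\neq i$), so that Theorem~\ref{evrgamma} identifies $F\,\Gamma(v)$ with $\beta_0(F,v)$ and Lemma~\ref{shortproof} exhibits this border polynomial as a finite $R[\widehat{\bf X}_i]$-combination of the values $v_{-a'}=g_{(c_i,-a')}$ with $-a'$ ranging over $[-\Delta'_i,{\bf 0}']$ --- precisely the data the hypothesis forces to vanish. I expect the step needing the most care to be the index bookkeeping: checking that the summation range ${\bf 0}'\leq a'\leq\delta F-{\bf 1}'$ in Lemma~\ref{shortproof} is exactly the reflection of the box $[-\Delta'_i,{\bf 0}']$, which rests on the identity $\delta F=(\delta f_j)_{j\neq i}$ and hence on $R$ being a domain (no degree drop in the product).
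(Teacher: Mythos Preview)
Your proof is correct, but it takes a genuinely different route from the paper's.

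The paper argues by a direct coordinate-by-coordinate induction: fixing $i$, it uses the recurrence $f_j\circ s=0$ for each $j\neq i$ to express $\lambda_j\,s_{(a,a')}$ (with $\lambda_j$ the leading coefficient of $f_j$) at a point just outside the box as an $R$--linear combination of values with strictly smaller $|a'_j|$; this pushes the vanishing of $g\circ s^{(a')}$ one step outward in the $j$--th direction, and the domain hypothesis lets one cancel the accumulated powers of $\lambda_j$. Iterating over $j=2,\ldots,n$ (after relabelling $i=1$) extends the annihilation to all of $-{\Bbb N}^{\hbox{\bf \scriptsize n}\setminus\{i\}}$.

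Your argument replaces this explicit propagation by the generating--function machinery already developed in the paper. Passing to $g=f\circ s$ and then to each transverse section $v=g^{(c_i)}$, you recognise that $v$ is EVR in the $\widehat{\bf X}_i$--variables (annihilated by all $f_j$, $j\neq i$), invoke Theorem~\ref{evrgamma} to get $F\,\Gamma(v)=\beta_0(F,v)$, and use Lemma~\ref{shortproof} to see that $\beta_0(F,v)$ is an $R[\widehat{\bf X}_i]$--combination of exactly the values $v_{-a'}$ with $-\Delta'_i\leq -a'\leq\hbox{\bf 0}'$, all of which vanish by hypothesis; the domain hypothesis then lets you cancel $F$ in $L_{n-1}$. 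This is cleaner and more conceptual --- it explains \emph{why} the box $[-\Delta'_i,\hbox{\bf 0}']$ is the right one (it is precisely the index set appearing in the border polynomial of $F$) --- at the cost of invoking Theorem~\ref{evrgamma} and the fact that $L_{n-1}$ is a domain over a domain. The paper's argument is more elementary and self-contained, needing nothing beyond the recurrences themselves.
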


\begin{proof} By definition, $s^{(a')} \in S^1(R)^-$ if $a' \in 
-{\Bbb N}^{\hbox{\bf \scriptsize n}\setminus\{i \}}$. We show that if 
$g \in R[X_i]$ satisfies $g\circ s^{(a')} = 0$ for 
$-\Delta'_i \leq a' \leq \bf 0'$, then $g\circ s^{(a')} = 0$ 
for all $a'$. Let $i=1$ and let $\lambda_2$ be the leading coefficient of 
$f_2$. Suppose first that 
$a' = (-\delta _2f_2, -\delta _3f_3+1, \ldots ,-\delta _nf_n+1)$. 
Since $f_2 \in \mbox{Ann}(s)$, we can write 
$$\lambda_2 s_{(a,a')} = 
- \sum_{b'=0}^{\delta _2f_2-1} (f_2)_{b'} s_{(a,-b',c')}$$ 
for all $a \leq 0$, where $c' = (-\delta _3f_3+1, \ldots 
,-\delta _nf_n+1)$. Then for $a \leq 0,$
$$(\lambda_2 g\circ s^{(a')})_a = \sum_{c=0}^{\delta g} g_c \lambda_2 
s_{(a-c,a')} 
= - \sum_{b'=0}^{\delta _2f_2-1} (f_2)_{b'} \sum_{c=0}^{\delta g} g_c 
s_{(a-c,-b',c')}$$ 
and $\sum_{c=0}^{\delta g} g_c s_{(a-c,-b',c')} = (g\circ 
s^{(-b',c')})_a = 0$ since $\Delta'_1 \leq (-b',c') \leq \BFZ'$. 
Thus for $a' = 
(-\delta _2f_2, -\delta _3f_3+1, \ldots ,-\delta _nf_n+1), \
\lambda_2 g\circ s^{(a')} = 0$. The argument 
is similar for $a' = (a_2, -\delta _3f_3+1, \ldots ,-\delta _nf_n+1), 
\ a_2 < -\delta _2f_2$; in 
this case, $\lambda_2^{-a_2-\delta _2f_2} g\circ s^{(a')} = 0$. 
Since $\lambda_2 \in R$ and $R$ 
is a domain, $g\circ s^{(a')} = 0$ for all $a_2 \leq 0$. 

Suppose inductively 
that the result is true for 
$a' = (a_2, a_3, \ldots , a_{k-1}, -\delta _kf_k+1,\ldots , 
-\delta _nf_n+1)$ and $a' = (a_2, a_3, \ldots , a_k, -\delta _{k+1}f_{k+1}+1, \ldots , 
-\delta _nf_n+1), a_k \leq -\delta f_k$. 
An analogous argument using $f_k \in \mbox{Ann}(s) 
\cap R[X_k]$ shows that $g\circ s^{(a')} = 0$. 
By induction $g\circ s^{(a')} = 
0$ for all $a' \in -{\Bbb N}^{\hbox{\bf \scriptsize n}\setminus\{1 \}}$. 
It is clear that the same argument 
applies to any $i \in \hbox{\bf n}$ and this completes the proof.
\end{proof}

\begin{theorem} \label{Annlcm}  
Let $n \geq 2$ and $R$ be potential. Suppose that for 
all $i \in \hbox{\bf n}, f_i \in \mbox{Ann}(s)\cap R[X_i], 
\ \delta f_i \geq 1$ and 
let $\Delta'_i= \delta (\prod_{j=1,j\neq i}^nf_j) - \hbox{\bf 1}' 
\in {\Bbb N}^{\hbox{\bf \scriptsize n}\setminus\{i \}}$. Then 
for all $i \in \hbox{\bf n}$, 
$$\mbox{Ann}(s) \cap R[X_i] = 
\left( \lcm\{\gamma(s^{(a')}) : -\Delta'_i \leq a' \leq 
\hbox{\bf 0}' \} \right)$$ 
\end{theorem}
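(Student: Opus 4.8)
The plan is to combine three earlier results: Proposition~\ref{annassoc}, which identifies $\mbox{Ann}(s)\cap R[X_i]$ with the intersection $\bigcap_{a'\in -{\Bbb N}^{\sBFi'}}\mbox{Ann}(s^{(a')})$ over all one--dimensional sections (here $\BFi=\{i\}$); Lemma~\ref{intersect}, which says that under the stated hypotheses (the existence of the $f_j$, $j\neq i$, and $R$ a domain) this infinite intersection already equals the \emph{finite} intersection $\bigcap_{-\Delta'_i\leq a'\leq \BFZ'}\mbox{Ann}(s^{(a')})$; and the fact (from the $1$--D theory, since $R$ is potential and each $s^{(a')}\in S^1(R)^-$) that each $\mbox{Ann}(s^{(a')})$ is the principal ideal generated by the monic $\gamma(s^{(a')})$, invoking Lemma~\ref{monicgen}.

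Having made these reductions, the remaining task is purely about principal ideals in $R[X_i]$: I must show that a finite intersection of principal ideals $(\gamma(s^{(a')}))$ in $R[X_i]$ is generated by the least common multiple of the generators. First I would note that $R[X_i]$ is a factorial domain (Gauss' Lemma, since $R$ is factorial), so $\lcm$ of a finite set of elements is well defined up to a unit. In a GCD domain, $(g_1)\cap(g_2)=(\lcm(g_1,g_2))$: an element lies in both $(g_1)$ and $(g_2)$ iff it is divisible by both $g_1$ and $g_2$ iff it is divisible by $\lcm(g_1,g_2)$. Iterating gives $\bigcap_{a'}(\gamma(s^{(a')}))=(\lcm\{\gamma(s^{(a')}): -\Delta'_i\leq a'\leq \BFZ'\})$, which is exactly the asserted formula. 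One should also remark that the $\lcm$ can be taken monic, since each $\gamma(s^{(a')})$ is monic, so the generator displayed is the monic generator promised by Theorem~\ref{potential}.

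The one point needing a little care is the well--definedness of the right--hand side: the index set $\{a' : -\Delta'_i\leq a'\leq \BFZ'\}$ is finite, so only finitely many $\gamma(s^{(a')})$ occur and the $\lcm$ makes sense; and if some section $s^{(a')}$ is the zero sequence then $\mbox{Ann}(s^{(a')})=R[X_i]$ contributes nothing to the intersection, consistent with the convention that such a factor is omitted from (equivalently, is a unit for) the $\lcm$. I expect the main obstacle to be essentially bookkeeping rather than mathematics: one must verify that the hypotheses of Lemma~\ref{intersect} and of Proposition~\ref{annassoc} are met with $\BFi=\{i\}$ and $\BFi'=\BFn\setminus\{i\}$, and that potentiality of $R$ is exactly what is needed to apply the $1$--D structure theory (through Lemma~\ref{monicgen}) to each section $s^{(a')}$ over $R$ itself. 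Once those are in place the GCD--domain computation is short and clean.
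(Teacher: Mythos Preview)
Your proposal is correct and follows essentially the same route as the paper: combine Proposition~\ref{annassoc} and Lemma~\ref{intersect} to reduce $\mbox{Ann}(s)\cap R[X_i]$ to the finite intersection $\bigcap_{-\Delta'_i\leq a'\leq\BFZ'}\mbox{Ann}(s^{(a')})$, invoke Lemma~\ref{monicgen} to see each factor is principal with monic generator $\gamma(s^{(a')})$, and then use that $R[X_i]$ is factorial to identify the intersection with $(\lcm\{\gamma(s^{(a')})\})$. Your treatment is somewhat more explicit about the GCD--domain step and the zero--section edge case, but the argument is the same.
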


\begin{proof} Fix $i \in \hbox{\bf n}$. For $a' \in 
-{\Bbb N}^{\hbox{\bf \scriptsize n}\setminus\{i\}}, \gamma(s^{(a')}) 
\in R[X_i]$ is well--defined (and indeed monic) by Lemma~\ref{monicgen}. 
By Proposition~\ref{annassoc} and 
Lemma~\ref{intersect} 
$$\mbox{Ann}(s) \cap R[X_i]=\bigcap_{-\Delta'_i\leq a'\leq \bf 0'}\mbox{Ann}(s^{(a')}).$$ 
Since $R[X_i]$ is 
factorial, the intersection is non--empty and is generated by 
$\mbox{lcm}\{\gamma(s^{(a')}) : -\Delta'_i \leq a' \leq \hbox{\bf 0}' \}$.
\end{proof}

The least common multiple of Theorem~\ref{Annlcm} may be computed using 
Algorithm 3.30 of Norton (1995a). 
If $R$ is a field, it may also be computed using
the ``Fundamental Iterative Algorithm'' of Feng \& Tzeng (1989).

We conclude this subsection by generalizing a result of Cerlienco \& Piras 
(1991) to sequences over domains. 

\begin{definition}  
We say that an ideal $I$ of $P_n$ is {\em cofinite} 
if $P_n/I$ is a finitely generated $R$--module and that an $n$--D lrs over 
$R$ is {\em cofinite} if its characteristic ideal contains a cofinite ideal.
\end{definition}

It is well known that if $R$ is a field, then a cofinite ideal in 
$R[X_1,\ldots,X_n]$ has a finite zero set. Conversely, if $R$ is an 
algebraically closed field, then an ideal in $R[X_1,\ldots,X_n]$ with a 
finite zero set is cofinite.

\begin{theorem}  \label{cofinite}
If $s$ is a cofinite lrs over $R$, then $s$ is EVR. 
Conversely, if $R$ is potential and $s$ is EVR, then $s$ is cofinite. 
\end{theorem}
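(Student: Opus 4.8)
The plan is to prove the two implications separately, the first being essentially formal and the second requiring the ``potential'' hypothesis through Theorem~\ref{potential}. For the forward direction, suppose $s$ is cofinite, so $\mbox{Ann}(s)$ contains a cofinite ideal $I$ with $P_n/I$ finitely generated as an $R$--module. Fix $i \in \BFn$. I would consider the images of the powers $1, X_i, X_i^2, \ldots$ in $P_n/I$: since $P_n/I$ is a finitely generated $R$--module, these cannot be $R$--independent in a suitable sense; more precisely, I want to produce a \emph{monic} relation. The clean way is to note that $R[X_i]/(I \cap R[X_i])$ embeds in $P_n/I$, which is a finitely generated $R$--module, so $R[X_i]/(I\cap R[X_i])$ is a finitely generated $R$--module; hence the image of $X_i$ in it satisfies a monic polynomial $f_i(X_i)$ over $R$ (standard: an element of an $R$--algebra that is finitely generated as an $R$--module is integral over $R$). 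Then $f_i \in I \cap R[X_i] \subseteq \mbox{Ann}(s) \cap R[X_i]$ with $\delta f_i \geq 1$, and since this holds for every $i$, $s$ is EVR.

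For the converse, assume $R$ is potential and $s$ is EVR. By Theorem~\ref{potential}, for each $i \in \BFn$ the ideal $\mbox{Ann}(s) \cap R[X_i]$ has a monic generator; call it $\gamma_i \in R[X_i]$, with $\delta \gamma_i = d_i \geq 1$. Let $I = \sum_{i=1}^n (\gamma_i) \subseteq P_n$. Then $I \subseteq \mbox{Ann}(s)$ since each $\gamma_i \in \mbox{Ann}(s)$. It remains to check that $I$ is cofinite, i.e. that $P_n/I$ is a finitely generated $R$--module. This is a standard fact about quotients by monic polynomials one in each variable: since $\gamma_i$ is monic of degree $d_i$ in $X_i$, the monomials $\BFX^a$ with $0 \leq a_i < d_i$ for all $i \in \BFn$ span $P_n/I$ over $R$ — one reduces any monomial by repeatedly using $\gamma_i$ to rewrite $X_i^{d_i}$ in terms of lower powers of $X_i$ (the monic leading coefficient makes this a genuine rewriting with coefficients in $R$, no denominators). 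Hence $P_n/I$ is generated by the finite set $\{\BFX^a : 0 \leq a \leq (d_1,\ldots,d_n) - \BFone\}$ as an $R$--module, so $I$ is cofinite and $s$ is a cofinite lrs.

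The only mildly delicate point is the rewriting/termination argument in the converse: one must be sure that reducing modulo the $\gamma_i$ actually terminates and yields a spanning set. I would phrase this by induction on (say) the lexicographic order of exponent vectors, or simply invoke the well--known structure of $R[X_1,\ldots,X_n]/(g_1,\ldots,g_n)$ when $g_i \in R[X_i]$ is monic — this is exactly the situation of a free module with basis the ``box'' of sub--degree monomials, and it holds over any commutative ring precisely because the $g_i$ are monic. In the forward direction the analogous subtle point is the claim that a commutative $R$--algebra $A$ finite as an $R$--module has every element integral over $R$; this is the classical determinant trick (Cayley--Hamilton for the multiplication--by--$x$ endomorphism of a finite generating set), applied here to $A = R[X_i]/(\mbox{Ann}(s)\cap R[X_i])$ viewed inside $P_n/I$. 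Neither step is hard, but the converse's reduction argument is where I would be most careful to get the statement (finite generation, not just some annihilating polynomial) exactly right.
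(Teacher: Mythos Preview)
Your converse is exactly the paper's argument: invoke Theorem~\ref{potential} to get monic $f_i$, then note that the box of monomials $\{\BFX^a : \BFZ \leq a \leq \delta f - \BFone\}$ spans $P_n/(f_1,\ldots,f_n)$ over $R$.

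For the forward direction the paper argues contrapositively: if $\mbox{Ann}(s)\cap R[X_i]=\{0\}$ then $R[X_i]$ embeds in $P_n/\mbox{Ann}(s)$, which therefore cannot be finitely generated, and hence (via the surjection $P_n/I \to P_n/\mbox{Ann}(s)$) no cofinite $I$ sits inside $\mbox{Ann}(s)$. Your direct integrality argument is the same idea unwound, and is arguably cleaner since it hands you a \emph{monic} $f_i$ of degree $\geq 1$, which is exactly what EVR requires.

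One point to tighten: the intermediate claim that $R[X_i]/(I\cap R[X_i])$ is a finitely generated $R$--module needs $R$ Noetherian, since over a general commutative ring submodules of finitely generated modules need not be finitely generated. This detour is unnecessary anyway: apply the determinant trick directly to multiplication by $\bar X_i$ on the finitely generated $R$--module $P_n/I$ itself, obtaining a monic $f_i\in R[X_i]$ with $f_i(X_i)\in I\subseteq \mbox{Ann}(s)$. Your final parenthetical ``viewed inside $P_n/I$'' already points this way; just drop the claim about the subalgebra being finitely generated.
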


\begin{proof} Suppose that $\mbox{Ann}(s) \cap R[X_i] = \{0 \}$ for some $i$. Then 
$$R[X_i] = 
R[X_i] / (\mbox{Ann}(s) \cap R[X_i]) 
\cong \left(R[X_i] + \mbox{Ann}(s) \right)/ \mbox{Ann}(s) 
\subseteq P_n/\mbox{Ann}(s)$$
and so $P_n / \mbox{Ann}(s)$ is not finitely generated. Thus $\mbox{Ann}(s)$ 
contains no cofinite ideals $I$ (otherwise there would be a map of 
$P_n/I$ {\em onto} $P_n / \mbox{Ann}(s)$, 
implying that $P_n/\mbox{Ann}(s)$ is finitely generated) 
i.e. $s$ is not cofinite. 

If $R$ is potential and $s$ is EVR, then by Theorem~\ref{potential} 
we may assume that the 
$f_i\in \mbox{Ann}(s)$ are monic, and set $F =(f_1,f_2,\ldots ,f_n)$. 
Then the 
monomials $\BFX^a \bmod F$ for $a \leq (\delta f_1-1, \delta f_2-1,\ldots , 
\delta f_n-1)$ 
generate $P_n / F$. Thus $F \subseteq \mbox{Ann}(s)$ is cofinite i.e. 
$s$ is cofinite.
\end{proof}

In particular, if $R$ is a field, the two notions coincide (cf. Cerlienco \& 
Piras (1991), Section 4). 

\subsection{Ann(s) as Ideal Quotient}
 
We generalize Proposition~\ref{1dgamma}(c)
and Fitzpatrick \& Norton (1990), Theorem 5.1 to $n$--D sequences over $R$. 
(The first part of the proof generalizes the proof for
the case $n=1$ and the second part generalizes the minimal
counterexample idea of Fitzpatrick \& Norton (1990), {\em loc. cit.})

\begin{theorem} \label{main} If $f_i \in 
\mbox{Ann}(s)\cap R[X_i]$, with $\delta f_i \geq 1$
for all $i \in \BFn$ and $f = 
\prod_{i=1}^n f_i$, then 
$$\mbox{Ann}(s) = \left(\sum_{i=1}^n(f_i) : \beta _0(f, s)/\BFX\right).$$
\end{theorem}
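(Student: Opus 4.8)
The plan is to prove the two inclusions separately, mimicking the structure of Proposition~\ref{1dgamma}(c) but working around the fact that, unlike the $1$--D case, $\Gamma(s)$ need not be a rational function of a single $f$. First I would establish $\mbox{Ann}(s) \subseteq \left(\sum_{i=1}^n(f_i) : \beta_0(f,s)/\BFX\right)$. Take $g \in \mbox{Ann}(s)$. The key identity is the one used in the $1$--D proof: since each $f_i \in \mbox{Ann}(s)$, Theorem~\ref{evrgamma} gives $f\Gamma(s) = \beta_0(f,s)$, and since $g \in \mbox{Ann}(s)$, Proposition~\ref{simpledecomp} (together with $gf$ being divisible by each $f_i$, so $gf \in \mbox{Ann}(s)$ too) gives $gf\Gamma(s) = \beta_0(gf,s)$. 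Combining, $g\cdot\beta_0(f,s) = g f \Gamma(s) = \beta_0(gf,s)$. I still need to exhibit $\beta_0(gf,s)$ as lying in $\BFX\cdot\sum_i(f_i)$. Here I would use Theorem~\ref{evrgamma} applied to the product $gf = \prod_i (g f_i)^{1/?}$ — more carefully, I would instead write $\beta_0(gf,s) = gf\Gamma(s)$ and note that $g\Gamma(s)$ has support in $(\BFZ,\delta g]$ by Proposition~\ref{charpoly}, hence $gf\Gamma(s) = f\cdot(g\Gamma(s))$ is divisible by $\BFX$ and by $f$; but divisibility by the single polynomial $f$ is weaker than membership in $\sum_i(f_i)$, so the cleaner route is: $g\cdot(\beta_0(f,s)/\BFX) = \beta_0(gf,s)/\BFX = f\cdot\bigl(g\Gamma(s)/\BFX\bigr)\cdot\BFX/\BFX$; rewrite using $g\Gamma(s)=\BFX h$ with $h \in R[\BFX]$ (valid since $g\in\mathrm{Ann}(s)$ and $\Gamma(s)$ is rational by the EVR hypothesis, via Theorem~\ref{evrgamma}), giving $g\cdot(\beta_0(f,s)/\BFX) = f h = (\prod f_i) h \in \sum_i(f_i)$. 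That settles one inclusion.

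For the reverse inclusion $\left(\sum_{i=1}^n(f_i) : \beta_0(f,s)/\BFX\right) \subseteq \mbox{Ann}(s)$, suppose $g$ satisfies $g\cdot\beta_0(f,s)/\BFX = \sum_{i=1}^n h_i f_i$ for some $h_i \in P_n$, and I must show $g\circ s = 0$. Multiplying by $\BFX$ and using $\beta_0(f,s) = f\Gamma(s)$ from Theorem~\ref{evrgamma}: $g f \Gamma(s) = \BFX\sum_i h_i f_i$. Since $f = \prod_j f_j$, for each $i$ the term $\BFX h_i f_i = \BFX h_i (f/\prod_{j\neq i}f_j)$, so dividing the whole equation by $f_i$ — legitimate in the fraction field, and the result lands back in $L_n$ because $f_i\Gamma(s) = \beta_0(f_i,s) \in P_n$ (again Theorem~\ref{evrgamma}, or just $f_i\in\mathrm{Ann}(s)$ with Proposition~\ref{1dgamma}(b) in the $X_i$ variable via the associated sequence $s^{(i,\ast)}$) — I would try to deduce that $g\Gamma(s)$ has no constant-in-$X_i$ part, i.e. $(g\Gamma(s))|(-\infty,\BFZ] = 0$, which by Proposition~\ref{charpoly} is exactly $g\in\mathrm{Ann}(s)$. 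This is where the minimal-counterexample idea of Fitzpatrick \& Norton enters: assume $g\notin\mathrm{Ann}(s)$ is chosen with $\delta g$ minimal (componentwise-minimal, or minimal in some term order) among such counterexamples, and derive a contradiction by producing a counterexample of strictly smaller degree, using the relations $f_i\circ s = 0$ to reduce the $X_i$-degree of $g$ whenever $\delta_i g \geq \delta f_i$.

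The main obstacle I expect is precisely this second inclusion: passing from the polynomial identity $g\cdot\beta_0(f,s)/\BFX \in \sum_i(f_i)$ to the conclusion $g\circ s = 0$. The difficulty is that $\sum_i(f_i)$ is not principal, so I cannot simply "divide by $f$" and invoke Corollary~\ref{ndlrs} or Corollary~\ref{1dcharpoly} as in the $1$--D case; I have to handle the $n$ generators simultaneously. The natural tool is reduction modulo the $f_i$: I would argue that it suffices to check $(g\circ s)_b = 0$ for $b$ in the finite box $[-\delta f_1+1,0]\times\cdots\times[-\delta f_n+1,0]$ (since $f_i\in\mathrm{Ann}(s)$ lets one propagate from this box to all of $-\N^n$), translate the hypothesis $g\cdot\beta_0(f,s)/\BFX = \sum h_i f_i$ into a statement about coefficients of $f\Gamma(s) = \beta_0(f,s)$ in that box, and show the $\sum h_i f_i$ on the right contributes nothing to the relevant "border" coefficients because $\BFX h_i f_i$ has its $X_i$-support starting at exponent $\geq\delta_i f_i + 1 > 0$ in a way that forces the coefficient of $\BFX^b$ for $b$ with some $b_i \le 0$ to vanish — then matching coefficients on both sides with Lemma~\ref{shortproof} pins down $g\circ s$. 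Getting the bookkeeping of supports exactly right across all $2^n$ border regions, so that the $h_i f_i$ terms are genuinely invisible where they need to be, is the delicate computational heart of the argument; the minimal-counterexample packaging is the clean way to avoid doing all $2^n$ cases explicitly.
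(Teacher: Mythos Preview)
Your proposal has the two inclusions' difficulties reversed, and the inclusion you treat as easy contains a genuine error.

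For $\mbox{Ann}(s) \subseteq \bigl(\sum_i(f_i) : \beta_0(f,s)/\BFX\bigr)$ you assert that $g \in \mbox{Ann}(s)$ gives $g\Gamma(s) = \BFX h$ with $h \in R[\BFX]$. This is \emph{false} for $n \geq 2$. Proposition~\ref{charpoly} only gives $\mbox{Supp}(g\Gamma(s)) \subseteq (\BFZ,\delta g]$, and for $n \geq 2$ this set contains points with some coordinates positive and others $\leq 0$; so $g\Gamma(s)$ is a genuine Laurent series, not $\BFX$ times a polynomial. (Take $g = f_1$: then $g\Gamma(s) = \beta_0(f,s)/\prod_{j>1} f_j$, which has infinitely many negative powers of $X_2,\ldots,X_n$.) You are conflating Corollary~\ref{1dcharpoly} with Proposition~\ref{charpoly}; the rationality of $\Gamma(s)$ from Theorem~\ref{evrgamma} does not help, since it concerns the specific product $f\Gamma(s)$, not $g\Gamma(s)$. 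Consequently your conclusion $g\,\beta_0(f,s)/\BFX = fh \in (f) \subseteq \sum_i(f_i)$ is unjustified. This inclusion is the hard one in the paper: one writes $h = g\Gamma(s)$, partitions $(\BFZ,\delta g]$ as $\wp_1 \cup \cdots \cup \wp_n$ where $\wp_i$ is the set of points whose first strictly positive coordinate is the $i$-th, sets $h_i = h|\wp_i$ and $p_i = h_i\cdot f/f_i$, so that $g\,\beta_0(f,s) = \sum_i p_i f_i$; then a minimal-counterexample argument on the \emph{index} $i$ (not on $\delta g$) shows each $p_i \in \BFX P_n$.

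For the reverse inclusion $\bigl(\sum_i(f_i):\beta_0(f,s)/\BFX\bigr) \subseteq \mbox{Ann}(s)$, which you flag as the obstacle, no minimal-counterexample or box-reduction is needed. Given $g\,\beta_0(f,s) = \BFX\sum_i f_i u_i$ with $u_i \in P_n$, divide by $f$ (using Theorem~\ref{evrgamma}) to get $g\Gamma(s) = \BFX\sum_i u_i/\prod_{j\neq i} f_j$. Since $\prod_{j\neq i} f_j$ does not involve $X_i$, the quotient $u_i/\prod_{j\neq i} f_j$ is a polynomial in $X_i$ with Laurent-series coefficients in the other variables, so every term of $\BFX\, u_i/\prod_{j\neq i} f_j$ has $X_i$-exponent $\geq 1$. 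Hence for $a \leq \BFZ$ the $i$-th summand contributes nothing to $(g\Gamma(s))_a$, and $g \circ s = 0$ follows directly.
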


\begin{proof} By Proposition~\ref{1dgamma}(c), we can assume that $n \geq 2.$

$\supseteq$: Suppose that $g \in P_n\setminus\{0 \}$ satisfies $g \beta 
_0(f,s) = \BFX \sum_{i=1}^n f_i u_i$ for some $u_i \in P_n$. By 
Theorem~\ref{evrgamma}, 
$$g \Gamma (s)= g \beta _0(f,s) / f = 
\left(\BFX \sum_{i=1}^n f_i u_i \right) / f 
= \BFX \sum_{i=1}^n \left( u_i \ / \prod_{j=1,j\neq i}^n f_j \right)$$ 
and so for $a \leq \BFZ$,
$$(g\circ s)_a = \sum_{i=1}^n 
\left(u_i \ / \prod_{j=1,j\neq i}^n f_j \right)_{a-\hbox{\bf \scriptsize 1}}=0$$ 
since each $u_i \in P_n$ i.e. $g \in \mbox{Ann}(s)$.

$\subseteq$: Let $g \in \mbox{Ann}(s)$ and $h = g \Gamma (s) \in L_n$. 
By Proposition~\ref{charpoly}(c), 
$\mbox{Supp}(h) \subseteq (\BFZ,\delta g]$. Let $\wp 
= \{\wp_i : 1 \leq i \leq n \}$ be the following partition of 
$(\BFZ,\delta g]$ : 
$$\wp_1 = \{ a \in (\BFZ,\delta g] : a_1 > 0 \} 
\mbox{ and } \wp_i 
= \{ a \in (\BFZ,\delta g] \setminus \bigcup _{j=1}^{i-1} \wp_j : a_i > 0 \}$$ 
if $2 \leq i \leq n$. It is 
clear that $\wp$ is a partition of $(\BFZ,\delta g]$ and that for $1 \leq i 
\leq n, \wp_i = \{ a \in (\BFZ,\delta g] : a_j \leq 0$ for $1 \leq j \leq 
i-1$ and $a_i > 0 \}$. Let $h_i = h|\wp_i$  and
$p_i = h_i f/f_i$ for $i \in \BFn$. 
Then $h_i \in X_i 
R((\widehat{\bf X}_i^{-\hbox{\bf \scriptsize 1}}))[X_i]$ for all $i \in \BFn$
and
$$g \beta _0(f,s) = g f \Gamma (s) = hf 
= \sum_{i=1}^n h_if = \sum_{i=1}^np_i f_i.$$ 
So it suffices to show that $p_i \in \BFX P_n$ for all $i \in \BFn$.

Suppose that for some $i \in \BFn, p_i \not\in \BFX P_n$, and let $k$ be 
the smallest such integer. Now $p_k/X_k$ is a polynomial in $X_k$, 
and so for some $r \geq 1$, the coefficient of $X_k^r$ in $p_k$ 
is not in $R[\widehat{\bf X}_k]$. 
Letting $d = \delta f_k$, the coefficient of $X_k^{d+r}$ in $h_kf = 
p_kf_k$ cannot therefore be in $R[\widehat{\bf X}_k]$. But 
$$\sum_{i=1}^n h_i f = g\beta _0(f,s) \in \BFX P_n$$ 
and so the 
coefficient of $X_k^{d+r}$ in $h_kf$ must cancel with other 
coefficients of $X_k^{d+r}$ in the $h_if$, for one or more $i \neq k$. 
If $1 \leq i < k$, the cancellation cannot take place by choice of $k$. If $k 
< i \leq n$, the exponent of $X_k$ in $fh_i$ is at most $d$ by 
construction of $h_i$. 
Thus cancellation of the coefficient of $X_k^{d+r}$ in 
$h_kf$ cannot take place and we conclude that $p_i \in \BFX P_n$ for 
all $i \in \BFn$, as required.
\end{proof}

\begin{example} Let $s$ be as in Example~\ref{exampleab}(a). 
We know that $f_1 = X_1$ and $f_2 = X_2-1$ belong to $\mbox{Ann}(s)$ and 
$\Gamma (s) = 1/(X_2-1), \ \beta _0(f_1f_2,s) = X_1X_2$. 
From Theorem~\ref{main}, $\mbox{Ann}(s) = ((f_1,f_2) : 1) = 
(f_1,f_2) = (X_1, X_2-1)$. We remark that Fitzpatrick \& Norton (1990), 
Theorem 5.2 gives $\mbox{Ann}(s) = (X_2-1)$ if $R$ is a field. 
\end{example}

\begin{example} Let $s$ satisfy $s_{(i,0)}=1$ for $i \leq 0$, $s_{(0,j)}
= 1$ for $j \leq 0$ and $s_{(i,j)}=0$ otherwise. Then 
$\Gamma(s) = (X_1X_2-1)/(X_1-1)(X_2-1)$, so that by Theorem~\ref{main},
Ann$(s) = (X_1(X_1-1), X_2(X_2-1) : X_1X_2-1).$
\end{example}

Combining Theorems~\ref{Annsi}, \ref{Annlcm} and 
~\ref{main} now yields an algorithm for computing a basis for $\mbox{Ann}(s)$:

\begin{algorithm} \label{Anness} (cf. Fitzpatrick \& Norton (1990), 
Algorithm 3.8).

 Input: An EVR $s \in S^n(R)^-, R$ a potential domain.

 Output: A basis for $\mbox{Ann}(s)$.

1. For each $i \in \BFn$, find the monic generator 
$\gamma _i$ of $\mbox{Ann}(s) \cap R[X_i]$ using Theorem~\ref{Annsi} 
or Theorem~\ref{Annlcm}. 

2. Compute $\gamma = \prod_{i=1}^n \gamma _i$ and 
$\beta _0(\gamma ,s)/ \BFX$.

3. Find a basis for the ideal quotient $( \sum_{i=1}^n (\gamma _i) : \beta 
_0(\gamma ,s)/\BFX ).$ 
\end{algorithm}

\begin{remark} Over a field ${\Bbb F}$, Step 3 may be done in several 
ways: 

(i) using the basis $\{\BFX^m : m \leq \delta \gamma -\BFone \}$ for ${\Bbb 
F}[\BFX]/(\gamma _1,\gamma _2,\ldots ,\gamma _n)$, find an ${\Bbb F}$-basis 
for solutions $f$ of the homogeneous equation 
$$f \beta _0(\gamma , s)/\BFX + \sum_{i=1}^n \gamma _i u_i = 0$$ 
where $u_i \in {\Bbb F}[\BFX]$.
Augmenting these solutions by $\{\gamma _1, \gamma _2,\ldots ,\gamma _n \}$ 
yields an ${\Bbb F}$--basis for $\mbox{Ann}(s)$. 

This approach is an application of Buchberger (1985), Method 6.7, which 
applies since $\{\gamma _1, \gamma _2, \ldots ,\gamma _n \}$ is trivially a 
(reduced) groebner basis for any term order. In this case, 
Algorithm~\ref{Anness} has 
complexity $O(\prod_{i=1}^n \delta \gamma _i^3)$. The ${\Bbb F}$--basis found 
can of course be converted to a reduced groebner basis with respect to any 
given term order, if desired.

(ii) Cox et al. (1991), Sections 3.3, 3.4 reduce the computation of a basis for 
an ideal quotient to computing a groebner basis (with respect to the 
lexicographic term order) of an intersection of ideals. In this way, we obtain 
a groebner basis for $\mbox{Ann}(s)$. This approach was applied to
computing an ideal basis of a $2$--D cyclic code and its dual 
in Norton (1995b).
\end{remark}

{\sc Acknowledgements}: The author would like to thank Aidan Schofield for a 
useful conversation, the U.K. Science and Engineering Research Council for 
financial support and the referees for helpful comments and suggestions. 
Thanks also to Guy Chass\'{e} for a copy of Ferrand (1988).

{\sc Note added in proof}: 
This paper combines two papers ``On $n$--dimensional
sequences I,II'' that were submitted to this journal. The first paper 
described a border partition for left--shifting, gave an inductive proof of 
Corollary~\ref{leftdecomp} and some applications of it.  
Lemma~\ref{shortproof} (and right--shifting) 
appeared in the second paper. The author also
suggested a new proof of Corollary~\ref{leftdecomp} 
(via Lemma~\ref{shortproof} and a direct, rather than inductive proof
of Theorem~\ref{borderseries}) 
in the second paper. This revised version adopts the new approach.

\begin{center}{\bf References}
\end{center}

Bourbaki, N. (1972). {\em Commutative Algebra\/}. Hermann. 

Bozta\c{s}, S., Hammons, A.R., Kumar, P.V. (1992).
4--Phase sequences with near--optimum correlation properties. 
{\em IEEE Trans. Information Theory\/} {\bf  38}, 1101--1113.

Buchberger, B. (1985). 
Groebner bases: an algorithmic method in polynomial ideal theory. 
{\em Multidimensional Systems Theory, N.K. Bose (ed.)\/},
184--232. Reidel, Dordrecht.

Cerlienco, L., Mignotte, M., Piras, F. (1987). 
Suites R\'{e}currentes Lin\'{e}aires. 
{\em L'Enseignement Math\-\'{e}matiques\/} {\bf 33}, 67--108.

Cerlienco, L., Piras, F. (1991). 
On the continuous dual of a polynomial bialgebra. 
{\em Communications in Algebra\/} {\bf 19}, 2707--2727.

Chabanne, H., Norton, G.H. (1994).
The $n$--dimensional key equation and a decoding application. 
{\em IEEE Trans. Information Theory\/} {\bf 40}, 200 -- 203.


Cox, D., Little, J. and O'Shea, D. (1991).
{\em Ideals, Varieties and Algorithms\/}. 
Springer.

Fan, P.Z., Darnell,M. (1994). Maximal length sequences over Gaussian integers.
{\em Electronic Letters\/} {\bf 30}, 1286--1287.

Feng, G.L., Tzeng, K.K. (1989). A generalization of the 
Berlekamp--Massey algorithm for multisequence shift register synthesis 
with applications to the decoding of cyclic codes. 
{\em IEEE Trans. Information Theory\/} {\bf 37}, 1274--1287.

Ferrand, D. (1988). {\em Suites R\'{e}currentes\/}.
IRMAR, Universit\'{e} de Rennes.

Fitzpatrick, P., Norton, G.H. (1990). 
Finding a basis for the characteristic ideal of 
an $n$--dimensional linear recurring sequence. 
{\em IEEE Transactions on Information Theory\/} {\bf 36}, 1480--1487.

Fitzpatrick, P., Norton, G.H. (1991). Linear recurring sequences and the 
path weight enumerator of a convolutional code.
{\em Electronic Letters\/} {\bf 27}, 98--99.

Fitzpatrick, P., Norton, G.H. (1995).
The Berlekamp--Massey algorithm and linear 
recurring sequences over a factorial domain.  
{\em J. Applicable Algebra in Engineering, Communications and Computing.\/}
{\bf 6}, 309--323.

Fossum, R. (1973). 
{\em The divisor class group of a Krull domain\/}. 
Springer Ergebnisse {\bf 74}.
                                                                             
Greene, D.H. \& Knuth, D.E. (1982). 
{\em Mathematics for the Analysis of Algorithms, 2nd 
Edition\/}. Birkh\"{a}user, Boston.

Homer, S. and Goldman, J. (1985). 
Doubly periodic sequences and two-dimensional 
recurrences. {\em SIAM J. Alg. Disc. Math.\/} {\bf 6}, 360--370.

Ikai, T., Kosako, H., Kojima, Y. (1975). Two dimensional cyclic 
codes. {\em Electronics and Communications in Japan\/} {\bf 57--A}, 27--35.

Ikai, T., Kosako, H. and Kojima, Y. (1976). Basic theory of 
two--dimensional cyclic codes.  
{\em Electronics and Communications in Japan\/} {\bf 59--A}, 31--47.

Imai, H., Arakaki, M. (1974). A theory of two--dimensional cyclic codes. {\em
National Convention Record of Institute of Electronics and
Communication Engineers, Japan\/}, 1564. In Japanese.

Imai, H. (1977) A theory of two--dimensional codes. {\em Inform. Control\/}
{\bf 34}, 1--21.

Lidl, R., Niederreiter, H. (1983). 
{\em Finite Fields, Encyclopedia of Mathematics and its 
Applications\/} {\bf 20}. Addison--Wesley.

Lin, D. \& Liu, M. (1988). Linear Recurring $m$-Arrays. 
{\em Proceedings of Eurocrypt 88\/}. LNCS {\bf 330}, 351--357. Springer.


Massey, J.L. (1969). Shift register synthesis and BCH decoding. 
{\em IEEE Trans. Information Theory\/} {\bf 15}, 122--127.

McEliece, R. (1987). {\em Finite Fields for Computer Scientists and 
Engineers\/}. 
Kluwer Academic Press. 

Nerode, A. (1958).  Linear automaton transformations. {\em Proc. Amer.
Math. Soc.\/} {\bf 9}, 541--544.

Niederreiter, H. (1988). Sequences with almost perfect linear
complexity profile. In {\em Advances in Cryptology-- Eurocrypt '87\/}. 
LNCS {\bf 304}, 37--51. Springer.

Norton, G.H. (1995a). On the minimal realizations of a finite
sequence. {\em J. Symbolic Computation\/} {\bf 20}, 93--115.

Norton, G.H. (1995b). The solution module (of $n$--dimensional sequences) of 
an ideal containing $(X_1^{M_1}-1, \ldots, X_n^{M_n}-1)$. 
{\em Proc. $\mbox{IV}^{\mbox{th}}$ I.M.A. Conference on Coding and 
Cryptography (P.G. Farrell, ed.)\/}, Formara Ltd., 315--326.

Norton, G.H. (1995c). Encoding an $n$--dimensional cyclic code over a finite
field. Presented at Third International Conference on Finite Fields,
Glasgow, July 1995.

Norton, G.H. (1995d). Some decoding applications of minimal realization.
{\em Proc. $V^{th}$ I.M.A. Conference on Coding and Cryptography\/}
LNCS {\bf 1025}, 53--62. Springer.

Norton, G.H. (1995e). On the minimal realizations of a finite $n$--D sequence. 
In preparation.

Peterson W. W., \& Weldon, E.J. (1972). 
{\em Error Correcting Codes, 2nd Edition\/}. MIT 
Press. Cambridge, Massachusetts.

Prabhu, K.A. \& Bose, N.K. (1982). 
Impulse Response Arrays of Discrete--Space Systems 
over a Finite Field. 
{\em IEEE Trans. Acoustics, Speech and Signal Processing\/} {\bf 30}, 10--18.


Rueppel, R.A. (1986). {\em Analysis and Design of Stream Ciphers\/}. 
Springer.

Sakata, S. (1978). General theory of doubly periodic
arrays over an arbitrary finite field and its applications. 
{\em IEEE Trans. Information Theory\/} {\bf 24}, 719--730.

Sakata, S. (1981). On determining the independent point
set for doubly periodic arrays and encoding two--dimensional cyclic codes
and their duals. {\em IEEE Trans. Information Theory\/} {\bf 27}, 556--565.

Sakata, S. (1988). Finding a minimal set of linear recurring relations
capable of generating a given finite two--dimensional array.
{\em J. Symbolic Computation\/} {\bf 5}, 321--337.

Sakata, S. (1990). 
Extension of the Berlekamp--Massey algorithm to $n$ dimensions. 
{\em Information and Computation \/} {\bf 84}, 207--239.

Welch, L.R., Scholtz, R.A. (1979).
Continued fractions and Berlekamp's algorithm. 
{\em IEEE Trans. Information Theory\/} {\bf 25}, 19--27.

Zierler, N. (1959). Linear recurring sequences. 
{\em J.\ S.I.A.M.\/} {\bf 7}, 31--48.

\vspace{2cm}

email: ghn@maths.uq.edu.au \hfill \today
\end{document}